\newtheorem{theorem}{Theorem}[section]
\newtheorem{proposition}[theorem]{Proposition}
\newtheorem{lemma}[theorem]{Lemma}
\newtheorem{claim}[theorem]{Claim}
\newtheorem{subclaim}[theorem]{Subclaim}
\newtheorem*{claim*}{Claim}
\newtheorem{corollary}[theorem]{Corollary}
\newtheorem{Main Conjecture}[theorem]{Main Conjecture}
\newtheorem{conjecture}[theorem]{Conjecture}
\theoremstyle{remark}
\newtheorem{definition}[theorem]{Definition}
\newtheorem{example}[theorem]{Example}
\theoremstyle{plain}
\newcommand{\rn}[2]{
    \tikz[remember picture,baseline=(#1.base)]\node [inner sep=0] (#1) {$#2$};%
}
\newcommand\aug{\fboxsep=-\fboxrule\!\!\!\fbox{\strut}\!\!\!}
\newcommand{\cellsize}{12}
\newlength{\cellsz} \setlength{\cellsz}{\cellsize\unitlength}
\newsavebox{\cell}
\sbox{\cell}{\begin{picture}(\cellsize,\cellsize)
\put(0,0){\line(1,0){\cellsize}}
\put(0,0){\line(0,1){\cellsize}}
\put(\cellsize,0){\line(0,1){\cellsize}}
\put(0,\cellsize){\line(1,0){\cellsize}}
\end{picture}}
\newcommand\cellify[1]{\def\thearg{#1}\def\nothing{}%
\ifx\thearg\nothing
\vrule width0pt height\cellsz depth0pt\else
\hbox to 0pt{\usebox{\cell} \hss}\fi%
\vbox to \cellsz{
\vss
\hbox to \cellsz{\hss$#1$\hss}
\vss}}
\newcommand\tableau[1]{\vtop{\let\\\cr
\baselineskip -16000pt \lineskiplimit 16000pt \lineskip 0pt
\ialign{&\cellify{##}\cr#1\crcr}}}
\newcommand{\kellsize}{8}
\newlength{\kellsz} \setlength{\kellsz}{\kellsize\unitlength}
\newsavebox{\kell}
\sbox{\kell}{\begin{picture}(\kellsize,\kellsize)
\put(0,0){\line(1,0){\kellsize}}
\put(0,0){\line(0,1){\kellsize}}
\put(\kellsize,0){\line(0,1){\kellsize}}
\put(0,\kellsize){\line(1,0){\kellsize}}
\end{picture}}
\newcommand\kellify[1]{\def\thearg{#1}\def\nothing{}%
\ifx\thearg\nothing
\vrule width0pt height\kellsz depth0pt\else
\hbox to 0pt{\usebox{\kell} \hss}\fi%
\vbox to \kellsz{
\vss
\hbox to \kellsz{\hss$#1$\hss}
\vss}}
\newcommand\ktableau[1]{\vtop{\let\\\cr
\baselineskip -16000pt \lineskiplimit 16000pt \lineskip 0pt
\ialign{&\kellify{##}\cr#1\crcr}}}
\newcommand{\excise}[1]{}
\newcommand\concat{\mathbin{+\mkern-10mu+}}
\begin{document}
\pagestyle{plain}
\title{The Kostka semigroup and its Hilbert basis}
\author{Shiliang Gao}
\author{Joshua Kiers}
\address{Dept.~of Mathematics
Ohio State University,
Columbus, OH 43210, USA}
\email{kiers.2@osu.edu}
\author{Gidon Orelowitz}
\author{Alexander Yong}
\address{Dept.~of Mathematics, U.~Illinois at Urbana-Champaign, Urbana, IL 61801, USA} 
\email{sgao23@illinois.edu, gidono2@illinois.edu, ayong@illinois.edu}
\date{August 3, 2021}
\keywords{Kostka coefficients, semigroup, Hilbert basis, Gale-Ryser theorem}
\subjclass[2010]{05E99}
\maketitle

\vspace{-.2in}
\begin{abstract}
The \emph{Kostka semigroup} consists of partitions $(\lambda,\mu)$ with at most $r$ parts that 
have positive Kostka coefficient. For this semigroup, Hilbert basis membership is an {\sf NP}-complete problem. We introduce  
\emph{KGR graphs} and \emph{conservative subtrees}, through the Gale-Ryser theorem on contingency tables, as a criterion for membership. Our main application shows
that if $(\lambda,\mu)$ is in the Hilbert basis then $\lambda_1\leq r$. 
We also classify the extremal rays of the associated polyhedral cone; these rays correspond to a (strict) subset of the Hilbert basis. 
In an appendix, the second and third authors show that a natural extension of our main result on the Kostka semigroup cannot be extended to the Littlewood-Richardson semigroup. This furthermore gives a counterexample to a recent speculation of P. Belkale concerning the semigroup controlling nonvanishing conformal blocks. 
\end{abstract}

\vspace{-.2in}
\tableofcontents
\vspace{-.5in}

\section{Introduction}

\subsection{Background}
Fix $r$ and $n$. Let $\lambda=(\lambda_1\geq \lambda_2\geq \ldots \geq \lambda_r)$ and $\mu=(\mu_1 \geq \mu_2 \geq \ldots \geq \mu_r)$ be integer partitions of $n$ with at most $r$ nonzero parts; let ${\sf Par}_r(n)$ be the set of such partitions. A \emph{semistandard tableau} of shape $\lambda$ and content $\mu$ is a filling of 
$\lambda$ (depicted as a Young diagram) with 
entries $1,2,\ldots,r$ such that:
\begin{itemize}
\item the rows weakly increase from left to right;
\item the columns strictly increase from top to bottom; and
\item there are $\mu_i$ many $i$'s.
\end{itemize}
The \emph{Kostka coefficient} $K_{\lambda,\mu}$ is the number of such tableaux. It is a basic notion in symmetric functions; see \emph{e.g.}, \cite{ECII}. 

\begin{example}
$K_{(4,2,1),(3,2,1,1)}=4$, as witnessed by the following semistandard tableaux:
\[\tableau{1 & 1 & 1 & 2\\ 2 &  3\\ 4},\ \  
\tableau{1 & 1 & 1 & 2\\ 2 &  4\\ 3}, \ \ 
 \tableau{1 & 1 & 1 & 3\\ 2 &  2\\ 4},\ \ 
 \tableau{1 & 1 & 1 & 4\\ 2 &  2\\ 3}.
\]
\end{example}

Define \emph{dominance order} on ${\sf Par}_r(n)$ by setting 
\begin{equation}
\label{eqn:dominance}
\lambda\geq_{\sf Dom} \mu \text{\ \ if  } \sum_{i=1}^t \lambda_i \geq \sum_{i=1}^t \mu_i, \text{ for  $1\leq t\leq r$.}
\end{equation}
It is textbook \cite[Proposition~7.10.5 and Exercise~7.12]{ECII} that 
\begin{equation}
\label{eqn:itistextbook}
K_{\lambda,\mu}> 0\iff \lambda\geq_{\sf Dom} \mu.
\end{equation}

Define the \emph{Kostka cone} 
\[{\sf Kostka}_r=\left\{(\lambda,\mu)\in {\mathbb R}^{2r}:
\begin{matrix}
\lambda_1\geq \lambda_2\geq \cdots \geq \lambda_r\geq 0\\
\mu_1\geq \mu_2\geq \cdots \geq \mu_r\geq 0\\
\sum_{i=1}^t \lambda_i \geq \sum_{i=1}^t \mu_i, \text{ for  $1\leq t\leq r-1$}\\
\sum_{i=1}^r \lambda_i = \sum_{i=1}^r \mu_i
\end{matrix}
\right\}\subseteq {\mathbb R}^{2r}.\]

The lattice points of ${\sf Kostka}_r$, namely,
\[{\sf Kostka}_r^{\mathbb Z}:={\sf Kostka}_r\cap {\mathbb Z}^{2r},\]
 form a semigroup. That is, 
 \[K_{\alpha,\beta}>0, K_{\lambda,\mu}>0\implies K_{\alpha+\lambda,\beta+\mu}> 0.\] From the general theory, ${\sf Kostka}_r^{\mathbb Z}$ has a unique minimal generating set as we now recall. 
 
 Let $C$ be any rational polyhedral cone inside $\mathbb{R}^m$. A finite set $\{a_1, \hdots, a_k\}\subset C\cap \mathbb{Z}^m$ is called a \emph{Hilbert basis} of $C$ if every integral point in $C$ can be realized as a nonnegative integral combination of $a_1, \hdots, a_k$. 
 Hilbert bases are known to exist for every rational polyhedral cone $C$; moreover, when $C$ is pointed, \emph{i.e.}, $C\cap -C = \{{\bf 0}\}$, there exists a unique minimal (with respect to set inclusion) Hilbert basis, see \cite[Theorem~16.4]{Schrijver}. We refer to that minimal Hilbert basis as \emph{the} Hilbert basis of $C$. 
 
 The uniqueness proof cited above characterizes the Hilbert basis $H$ of $C$ as follows: 
 $$
 H = \{a\in C\cap \mathbb{Z}^m | a\ne 0; a \text{ is not the sum of two nonzero elements $b,c\in C\cap \mathbb{Z}^m$}\}.
 $$
 In other words, the Hilbert basis consists of the irreducible elements of the semigroup $C\cap \mathbb{Z}^m$. 
 
Since ${\sf Kostka}_{r}$ is determined by rational inequalities, it is a rational polyhedral cone. Moreover, ${\sf Kostka}_r$ is pointed. Therefore, ${\sf Kostka}_r^{\mathbb Z}$ is finitely generated and has a (unique minimal) Hilbert basis.  See Table~\ref{table:Hilb4} for the case $r=4$. 

\begin{table}[t]
\centering
\begin{tabular}{ c c c c }
 $\ktableau{\ }, \ktableau{\ }$  & $\ktableau{ \ \\ \ }, \ktableau{ \ \\ \ }$ &
 $\ktableau{ \ \\ \ \\ \ }, \ktableau{ \ \\ \ \\ \ }$  & $\ktableau{ \ \\ \ \\ \ \\ \ }, \ktableau{ \ \\ \ \\ \  \\ \ }$ \\ \noalign{\vskip 2mm}    
 $\ktableau{\ & \ }, \ktableau{\ \\ \ }$  &  $\ktableau{\ & \  \\ \ }, \ktableau{\ \\ \  \\ \ }$ &
 $\ktableau{\ & \  \\ \ \\ \ }, \ktableau{\ \\ \  \\ \ \\ \ }$  &  $\ktableau{ \ & \ \\ \ & \ }, \ktableau{\ \\ \  \\ \ \\ \ }$ \\ \noalign{\vskip 2mm}    
 $\ktableau{ \ & \ \\ \ & \ } , \ktableau{ \ & \ \\ \ \\ \ }$  & $\ktableau{ \ & \ \\ \ & \ \\ \ } , \ktableau{ \ & \ \\ \ \\ \ \\ \ }$ &
 $\ktableau{ \ & \ \\ \ & \ \\ \ & \ } , \ktableau{\ & \ \\ \ & \ \\ \ \\ \ }$  & $\ktableau{\ & \ & \ }, \ktableau{\ \\ \ \\ \ }$ \\ \noalign{\vskip 2mm}    
 $\ktableau{\ & \ & \ \\ \ }, \ktableau{ \ \\ \ \\ \ \\ \ }$  &
 $\ktableau{ \ & \ & \ \\ \ & \ & \ } , \ktableau{ \ & \ \\ \ & \ \\ \ & \ }$ &
 $\ktableau{ \ & \ & \ \\ \ & \ & \ }, \ktableau{ \ & \ & \ \\ \ \\ \ \\ \ }$  &
 $\ktableau{ \ & \ & \  \\ \ & \ & \ \\ \ & \ }, \ktableau{\ & \ \\ \ & \ \\ \ & \ \\ \ & \ }$ \\ \noalign{\vskip 2mm}    
 $\ktableau{\ & \ & \ \\ \ & \ & \ \\ \ & \ & \ } , \ktableau{ \ & \ & \ \\ \ & \ \\ \ & \ \\ \ & \ }$  &
 $\ktableau{ \ & \ & \ & \ } , \ktableau{ \ \\ \ \\ \ \\ \ }$ &
 $\ktableau{ \ & \ & \ & \ \\  \ & \ & \ & \ \\ \ & \ & \ & \ }, \ktableau{ \ & \ & \ \\ \ & \ & \ \\ \ & \ & \ \\ \ & \ & \ }$  & \   
 \end{tabular}
 \vspace{2mm}    
 \caption{The Hilbert basis for ${\sf Kostka}_{4}^\mathbb{Z}$.} \label{table:Hilb4}
\end{table}

The general problem, of deciding if a lattice point in a pointed cone
is in the Hilbert basis, is {\sf NP}-complete \cite[Theorem~3.1]{Henk}. This formal
indication of difficulty \emph{specifically} holds for the Kostka cone (with a similar argument). Define a decision problem {\tt KostkaHilbert} as follows. The input is $(\lambda,\mu)\in {\sf Kostka}_r^{\mathbb Z}$ described by their \emph{columns} (\emph{i.e.}, we input the conjugate shapes $(\lambda',\mu')$). The output is whether $(\lambda,\mu)$ is
\emph{reducible}, that is, if there exist nontrivial
$(\lambda^{\bullet},\mu^{\bullet}),(\lambda^{\circ},\mu^{\circ})\in {\sf Kostka}_r^{\mathbb Z}$  such that
\begin{equation}
\label{eqn:decomposition}
(\lambda,\mu)=(\lambda^{\bullet},\mu^{\bullet})+(\lambda^{\circ},\mu^{\circ}).
\end{equation}
$(\lambda,\mu)\in {\sf Kostka}_r^{\mathbb Z}$ is \emph{irreducible} if it is not reducible. 
Measure the input $(\lambda,\mu)$ using bits and assume
arithmetic operations take constant time. This is proved in Appendix~\ref{sec:2}:

\begin{theorem}[{\emph{cf.} \cite[Theorem~3.1]{Henk}}]
\label{thm:complexity}
{\tt KostkaHilbert} is {\sf NP}-complete.
\end{theorem}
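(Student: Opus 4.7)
The plan is to follow the template of \cite[Theorem~3.1]{Henk}: show that \texttt{KostkaHilbert} lies in {\sf NP} by exhibiting a small reducibility witness, and establish {\sf NP}-hardness by reducing {\sf SUBSET~SUM} (or {\sf PARTITION}) to \texttt{KostkaHilbert}.

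For {\sf NP}-membership, the natural certificate of reducibility is a pair of nontrivial summands $(\lambda^\bullet,\mu^\bullet),(\lambda^\circ,\mu^\circ)\in{\sf Kostka}_r^{\mathbb{Z}}$ with $(\lambda,\mu)=(\lambda^\bullet,\mu^\bullet)+(\lambda^\circ,\mu^\circ)$. Since every coordinate of a point in the Kostka cone is nonnegative, the summand coordinates are bounded coordinatewise by those of $(\lambda,\mu)$, so the certificate has bit size polynomial in the input. Verification requires only checking the weakly-decreasing conditions, the dominance inequalities \eqref{eqn:dominance}, the total weight equality for each summand, and that neither summand is zero.

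The main work is {\sf NP}-hardness. Given a {\sf SUBSET~SUM} instance with positive integers $a_1,\dots,a_n$ and target $T$, I would construct $(\lambda,\mu)\in{\sf Kostka}_r^{\mathbb{Z}}$ (with $r$ polynomial in $n$ and $\log\max_i a_i$) whose nontrivial decompositions are in bijection with subsets $I\subseteq[n]$ for which $\sum_{i\in I}a_i=T$. The idea is to build $\lambda$ and $\mu$ from a common rigid ``staircase'' base that forces most of the dominance inequalities in \eqref{eqn:dominance} to be equalities on both summands, leaving only a controlled amount of slack. The numbers $a_i$ are then inserted as carefully placed perturbations into distinguished rows of $\lambda$, so that in any decomposition $(\lambda^\bullet,\mu^\bullet)+(\lambda^\circ,\mu^\circ)$ each $a_i$ must be assigned wholly to one summand. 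The total-weight condition $|\lambda^\bullet|=|\mu^\bullet|$ then translates exactly into the subset sum equation $\sum_{i\in I}a_i=T$.

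The hardest step will be ensuring no spurious decompositions exist: I must rule out decompositions that split a single $a_i$ across both summands, that rearrange the staircase portion, or that exploit unintended slack in the dominance order. Making the slack in each dominance inequality as small as possible---ideally zero except at the rows where an $a_i$ is inserted---should suffice, since this rigidly forces the $\lambda^\bullet$ and $\mu^\bullet$ rows to be nonnegative integer combinations of prescribed unit-scale building blocks. Finally, because the input is the conjugate pair $(\lambda',\mu')$, I would verify that the constructed row lengths remain polynomial in the binary size of the {\sf SUBSET~SUM} instance, so that the reduction runs in polynomial time.
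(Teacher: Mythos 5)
Your overall strategy (a succinct reducibility certificate for {\sf NP}-membership, then a reduction from {\sf SUBSET SUM}) is the same as the paper's, but the hardness half is only a plan, and the plan as sketched has a concrete obstruction. You propose to insert the numbers $a_i$ as perturbations into distinguished \emph{rows} of $\lambda$ and to argue that making the dominance inequalities tight forces each $a_i$ to be assigned wholly to one summand. But addition in ${\sf Kostka}_r^{\mathbb Z}$ is row-wise, so a row entry $\lambda_i=\lambda_i^{\bullet}+\lambda_i^{\circ}$ can be split arbitrarily between the two summands; tightness of the dominance inequalities constrains partial sums of $\lambda^{\bullet}-\mu^{\bullet}$ but gives no atomicity of the $a_i$ by itself, and you supply no mechanism ruling out exactly the ``spurious decompositions'' you flag as the hardest step. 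The gadget and its correctness proof --- which are the entire content of the hardness direction --- are therefore missing. (You also impose $r$ polynomial in $n$ and $\log\max_i a_i$, which the problem does not require, since the input is the \emph{column} encoding with lengths in binary, and which your sketch does not show how to achieve.)

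The paper's reduction avoids all of this by exploiting that under addition of partitions the \emph{column multisets} merge: in any decomposition the columns of $\lambda$ and of $\mu$ are distributed intact between the summands, so columns are the natural atomic objects, not rows. Concretely, with $a_1\ge\cdots\ge a_d$, target $b$, and $A=\sum_i a_i$, take $\overline\lambda$ with columns $A+1,a_1,\dots,a_d$ and $\overline\mu$ with only two columns, of lengths $A+1+(A-b)$ and $b$. Any nontrivial decomposition must have $\mu^{\bullet}=(1^b)$, and the length-$(A+1)$ column of $\overline\lambda$ cannot accompany it since it alone has size exceeding $b$; hence the $\lambda$-columns paired with $(1^b)$ form a subset of the $a_i$ summing to exactly $b$, and conversely any such subset yields a decomposition, the dominance checks being immediate because a single column is dominance-minimal among partitions of its size. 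If you want to salvage your approach, encode the $a_i$ as column lengths (as the paper does) rather than row perturbations. One smaller point on membership: the certificate should be presented in column form, consistent with the input encoding, since the number of rows can be exponential in the bit size of the input; with that caveat your membership argument is fine and matches the paper's.
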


Hence, no polynomial-time algorithm exists for {\tt KostkaHilbert} unless
${\sf P}={\sf NP}$. This seems to rule out any simple, explicit classification of the Hilbert basis of ${\sf Kostka}_r^{\mathbb Z}$.

\subsection{Results} 
To each 
$(\lambda,\mu)\in {\sf Kostka}_r^{\mathbb Z}$ we introduce the \emph{Kostka-Gale-Ryser (KGR) graph} $G(\lambda,\mu)$ and the 
concept of  a \emph{conservative subtree}\footnote{Akin to ``conservative vector field'' and path independence.} of $G(\lambda,\mu)$.

\begin{theorem}[see~Theorem~\ref{thm:bigone}]\label{thm:mainbigone}
Let $(\lambda,\mu)\in {\sf Kostka}_r^{\mathbb Z}$.
If $G(\lambda,\mu)$ has a conservative subtree, then $(\lambda,\mu)$ is not a Hilbert basis element of 
${\sf Kostka}_r^{\mathbb Z}$.
\end{theorem}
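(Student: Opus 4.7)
My plan is to produce an explicit nontrivial decomposition $(\lambda,\mu)=(\lambda^{\bullet},\mu^{\bullet})+(\lambda^{\circ},\mu^{\circ})$ with both summands in ${\sf Kostka}_r^{\mathbb Z}$, directly from a conservative subtree $T\subseteq G(\lambda,\mu)$. By the characterization of the Hilbert basis as the set of irreducible elements of the semigroup ${\sf Kostka}_r^{\mathbb Z}$, exhibiting any such decomposition forces $(\lambda,\mu)$ out of the Hilbert basis.

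First I would unpack the Gale-Ryser picture implicit in the hypothesis: by (\ref{eqn:itistextbook}), $(\lambda,\mu)\in{\sf Kostka}_r^{\mathbb Z}$ is equivalent to the existence of a 0-1 contingency matrix with row sums $\mu$ and column sums $\lambda'$. I expect the vertices of $G(\lambda,\mu)$ to index certain parts or columns of $\lambda$ and $\mu$, with edges encoding permissible local modifications of such a realization, and the ``conservative'' property of $T$ to provide a consistent potential function on $T$ whose edge-differences integrate --- independently of the chosen path, in analogy with a conservative vector field --- to a well-defined increment $\delta=(\delta^\lambda,\delta^\mu)\in{\mathbb Z}^{2r}$ with $0\leq \delta_i^\lambda\leq \lambda_i$ and $0\leq\delta_i^\mu\leq\mu_i$. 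Setting $(\lambda^\circ,\mu^\circ)=\delta$ and $(\lambda^\bullet,\mu^\bullet)=(\lambda,\mu)-\delta$, and re-sorting each coordinate into weakly decreasing order, would then be my candidate decomposition.

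The verifications required are: (i) each summand has nonnegative, weakly decreasing parts; (ii) the total-sum equality $\sum_i\lambda_i^{?}=\sum_i\mu_i^{?}$ holds for each summand; and (iii) the partial-sum dominance inequalities of (\ref{eqn:dominance}) hold for each summand, so that (\ref{eqn:itistextbook}) places both pieces in ${\sf Kostka}_r^{\mathbb Z}$. Nontriviality --- that neither summand is $(0,0)$ --- should be an immediate byproduct of $T$ containing at least one edge, which will force $\delta\neq 0$ and $\delta\neq(\lambda,\mu)$.

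I expect (iii) to be the real obstacle: the dominance inequalities are the only nontrivial constraint cutting out ${\sf Kostka}_r$, and their surviving the split is precisely what the conservative (path-independent) condition on $T$ must be engineered to guarantee. Concretely, I would trace each partial sum $\sum_{i\leq t}(\lambda_i^{?}-\mu_i^{?})$ along $T$, using the potential function to match each decrement on the $\mu$-side with an equal or larger decrement on the $\lambda$-side. Coordinating this with the re-sorting step in (i) without disturbing (iii) is, I suspect, the most delicate combinatorial bookkeeping; no new external input beyond Gale-Ryser should be necessary.
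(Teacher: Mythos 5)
Your proposal is a plan rather than a proof, and the plan is missing the one idea that makes the theorem work. You never say what the map from a conservative subtree to the increment $\delta$ actually is; the ``potential function whose edge-differences integrate'' is a metaphor, not a construction, so none of your steps (i)--(iii) can be checked. More importantly, the route you sketch --- choose $\delta$ componentwise with $0\le\delta\le(\lambda,\mu)$, subtract, re-sort, and then fight to preserve the dominance inequalities --- is exactly the hard road the paper's argument is designed to avoid. The actual mechanism is: run Ryser's algorithm to get the canonical contingency table $A(\lambda,\mu)$ with row sums $\mu$ and column sums $\lambda'$, form the row-difference matrix $A^*(\lambda,\mu)$ whose $\pm1$'s are the vertices of $G(\lambda,\mu)$, and let $S$ be the set of \emph{columns} of $A(\lambda,\mu)$ met by the conservative subtree. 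The decomposition is then a split of $(\lambda,\mu)$ by columns: $\mu^{\bullet}$ is the sum of the columns of $A(\lambda,\mu)$ in $S$ and $\lambda^{\bullet}$ has column lengths $\{\lambda'_i : i\in S\}$. Dominance for both summands is then \emph{free}: each column-submatrix of $A(\lambda,\mu)$ is itself a $\{0,1\}$-matrix with the right marginals, so Gale--Ryser (Proposition~\ref{thm:0-1}) immediately places $(\lambda^{\bullet},\mu^{\bullet})$ and $(\lambda^{\circ},\mu^{\circ})$ in ${\sf Kostka}_r^{\mathbb Z}$. No re-sorting of $\mu^{\bullet}$ is needed or permitted (independent re-sorting would break $\mu^{\bullet}+\mu^{\circ}=\mu$); the entire content of the ``conservative subtree'' definition is to force the selected columns of $A^*(\lambda,\mu)$ to sum to a vector $v^*$ with $0\le v^*_i\le\mu^*_i$, i.e.\ to force $\mu^{\bullet}$ and $\mu^{\circ}$ to be weakly decreasing already. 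That is proved by a cancellation argument: along the subtree every non-source, non-sink vertex is paired through its horizontal arc with a vertex of opposite sign in the same row, the surviving $+1$'s are sources of $G(\lambda,\mu)$ and there are only $\mu^*_i$ of those per row (Proposition~\ref{claim:sources}), and in case (C.2) the lone $-1$ sink is cancelled by the required source in its own row.

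Two further points you wave at but do not secure. Nontriviality is not ``an immediate byproduct'': one must show $\emptyset\ne S\subsetneq[\lambda_1]$, and the paper does this separately in the two cases --- if $G'$ is a full connected component meeting every column it would be all of $G(\lambda,\mu)$ (vertices in a common column are linked by vertical arcs), and in case (C.2) the single $1$ in the leftmost column is a sink of $G(\lambda,\mu)$, hence excluded, so $1\notin S$. Also, your framing in terms of arbitrary ``permissible local modifications'' of a Gale--Ryser realization ignores that the theorem is stated for the specific canonical matrix $A(\lambda,\mu)$ produced by Ryser's algorithm; the structural lemmas about its columns and about $A^*(\lambda,\mu)$ (e.g.\ Lemmas~\ref{lemma:eachAcolumn}, \ref{lemma:underhood}, \ref{alternation}) are what make the graph and the cancellation argument well defined. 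As written, your proposal has a genuine gap at its center: the passage from subtree to decomposition is unspecified, and the dominance difficulty you flag as ``the real obstacle'' dissolves only once the column-splitting construction is in place.
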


The converse is false (see Theorem~\ref{thm:someifftrue} and discussion thereafter about 
Example~\ref{exa:conversenottrue}).
We apply Theorem~\ref{thm:mainbigone} to prove a
bound on the first coordinate of a Hilbert basis element. This is our main result:

\begin{theorem}[Width Bound]
\label{thm:bound}
Suppose $(\lambda,\mu)$ is a Hilbert basis element of ${\sf Kostka}_{r}^{\mathbb Z}$. Then $\lambda_1\le r$. Moreover, if
$\lambda_1=r$ then $\lambda$ and $\mu$ are both rectangles.
\end{theorem}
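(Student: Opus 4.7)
The plan is to prove both parts of the theorem by contraposition using Theorem~\ref{thm:mainbigone}: I will show that if $(\lambda, \mu) \in {\sf Kostka}_r^{\mathbb Z}$ satisfies either $\lambda_1 > r$, or $\lambda_1 = r$ with at least one of $\lambda, \mu$ failing to be a rectangle, then the KGR graph $G(\lambda, \mu)$ admits a conservative subtree. By Theorem~\ref{thm:mainbigone}, this forces $(\lambda, \mu)$ to be reducible, contradicting the Hilbert basis hypothesis.

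The key input is a pigeonhole observation on conjugate shapes. Because $\lambda$ and $\mu$ each have at most $r$ parts, every entry of $\lambda'$ (of which there are $\lambda_1$) and of $\mu'$ (of which there are $\mu_1 \le \lambda_1$) lies in $\{1, \ldots, r\}$. When $\lambda_1 > r$, strict pigeonhole forces two entries of $\lambda'$ to coincide, i.e., two consecutive columns of $\lambda$ have equal height. When $\lambda_1 = r$ and $\lambda$ is not a rectangle, $\lambda'$ is not constant, so some consecutive entries satisfy $\lambda'_j > \lambda'_{j+1}$ -- a ``step'' in $\lambda'$. An analogous statement holds for $\mu$.

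I then plan to promote each such column-level observation to an explicit conservative subtree of $G(\lambda, \mu)$. In the equal-columns case, the two identical columns yield two parallel walks in $G(\lambda, \mu)$ that can be joined into a small subgraph whose edge labels, read along either walk, give the same total contribution -- the conservativeness condition. In the step case, a comparable subgraph is built around the two columns of distinct heights, with the step absorbed into the edge labeling so that conservativeness again holds. In the complementary scenario where $\lambda_1 = r$ and both $\lambda$ and $\mu$ are rectangles, the contingency picture should be too rigid to admit any such subtree, matching the rectangle exception in the statement.

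The main obstacle is the explicit construction and verification of conservativeness, not the outer logic. Conservativeness is a path-independence condition on edge labels, and one must ensure not only that the candidate subtree sits inside $G(\lambda, \mu)$ but also that the edge labels along every path in it are mutually consistent. I expect this verification to factor through a canonical normal form -- essentially a minimal column-swap or minimal step-transfer in the underlying Gale-Ryser $0/1$ contingency table -- reducing conservativeness to bookkeeping on a small number of edges. Once the subtree is exhibited in all non-exceptional subcases, Theorem~\ref{thm:mainbigone} simultaneously yields the width bound $\lambda_1 \le r$ and the rectangle restriction when $\lambda_1 = r$.
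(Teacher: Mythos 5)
Your outer logic (contrapositive via Theorem~\ref{thm:mainbigone}) is the right frame, but the engine of your argument is missing, and the local mechanism you propose cannot be made to work. You apply pigeonhole to the columns of $\lambda$ and claim that two equal columns (resp.\ a step $\lambda'_j>\lambda'_{j+1}$) can be promoted locally to a conservative subtree. This proves too much: the pair $(\lambda,\mu)=((2,2),(2,1,1))$ is irreducible in ${\sf Kostka}_r^{\mathbb Z}$ for $r\geq 3$ (an easy check of all componentwise splittings), yet $\lambda$ has two equal columns; similarly $((2,1),(1,1,1))$ is a Hilbert basis element although $\lambda'$ has a step. If equal columns or steps alone produced a conservative subtree, Theorem~\ref{thm:mainbigone} would force reducibility in these examples. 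So any correct argument must use the hypothesis $\lambda_1>r$ (resp.\ $\lambda_1=r$) globally, not merely the existence of a repeat or a step. Note also that equal columns of $\lambda$ only give equal \emph{column sums} of $A(\lambda,\mu)$, not equal columns, so there are no ``two parallel walks''; and conservativeness is not a path-independence condition on edge labels (the KGR graph has no edge labels) --- it is the combinatorial conditions (C.1)/(C.2) on sinks, sources, and saturation of vertical arcs in a column. The paper's pigeonhole is applied to different objects: after reducing to the case that $G(\lambda,\mu)$ is connected (otherwise Lemma~\ref{disconnectedcase} finishes), Lemma~\ref{lemma:connected} gives a horizontal arc out of each of the $\lambda_1-1$ non-leftmost columns, Lemma~\ref{lemma:underhood} forbids horizontal arcs in the bottom row, so $\lambda_1-1>r-1$ forces two horizontal arcs in one row; the real work is then Lemma~\ref{lemma:samecol}, which uses planarity, Lemma~\ref{claim:1VertEdge} and Lemma~\ref{claim:SourceBlock} to build a subtree of type (C.2). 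None of this delicate construction is present, or replaceable by the local bookkeeping you describe.

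A second, independent problem concerns the case $\lambda_1=r$. The paper uses the graph machinery only to show that $\mu$ must be a rectangle; the remaining case ($\mu$ a rectangle, $\lambda$ not) is handled by a direct arithmetic decomposition --- reorder the column lengths of $\lambda$, look at the $r+1$ partial sums modulo the common column length $s$ of $\mu$, and pigeonhole to extract a subset of columns of total size divisible by $s$ --- not by exhibiting a conservative subtree. Since the converse of Theorem~\ref{thm:mainbigone} is false (Example~\ref{exa:conversenottrue} together with Theorem~\ref{thm:someifftrue}), you cannot expect a conservative subtree to exist in every ``non-exceptional'' subcase, so your plan to route that subcase through the subtree criterion is not just incomplete but likely impossible as stated; some separate argument of this arithmetic kind is needed there.
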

Theorem~\ref{thm:bound}'s bound is sharp since $((t),(1^t))\in {\sf Kostka}_r^{\mathbb Z}$ is in the basis, for $1\leq t\leq r$. The second
sentence implies $\mu_1\leq r-1$; this bound is also sharp since $((r-1) \times r ,r\times (r-1))$ is in the basis. Let $\ell(\lambda)$ be the number of nonzero parts of $\lambda$.
\begin{corollary}
Suppose $\lambda$ and $\mu$ are partitions of the same size, and $\lambda\geq_{\sf Dom} \mu$. If $\ell(\mu)<\lambda_1$
then $(\lambda,\mu)$ is not in the Hilbert basis of ${\sf Kostka}_r^{\mathbb Z}$ for any $r\geq \ell(\mu)$. 
\end{corollary}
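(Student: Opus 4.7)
The plan is to reduce the problem to the minimal admissible parameter $r = \ell(\mu)$ and then invoke Theorem~\ref{thm:bound} directly.

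First I would verify that $(\lambda, \mu) \in {\sf Kostka}_{\ell(\mu)}^{\mathbb Z}$, which requires $\ell(\lambda) \leq \ell(\mu)$. Setting $n := |\lambda| = |\mu|$ and $k := \ell(\mu)$, one has $\sum_{i=1}^k \mu_i = n$; dominance gives $\sum_{i=1}^k \lambda_i \geq n$, and since $|\lambda| = n$ this forces equality and hence $\lambda_{k+1} = \lambda_{k+2} = \cdots = 0$. Thus the pair makes sense as an element of ${\sf Kostka}_{\ell(\mu)}^{\mathbb Z}$.

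Second, I would observe that Hilbert basis membership is inherited downward in $r$: if $(\lambda,\mu)$ admitted a nontrivial decomposition $(\lambda^{\bullet},\mu^{\bullet}) + (\lambda^{\circ},\mu^{\circ})$ in ${\sf Kostka}_{\ell(\mu)}^{\mathbb Z}$, then padding each summand with zero parts would produce a nontrivial decomposition in ${\sf Kostka}_r^{\mathbb Z}$ for any $r \geq \ell(\mu)$, since all defining inequalities of the Kostka cone are preserved under zero-padding. Contrapositively, if $(\lambda,\mu)$ lies in the Hilbert basis of ${\sf Kostka}_r^{\mathbb Z}$ for some such $r$, it also lies in the Hilbert basis of ${\sf Kostka}_{\ell(\mu)}^{\mathbb Z}$.

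Finally, Theorem~\ref{thm:bound} applied at the parameter $\ell(\mu)$ would force $\lambda_1 \leq \ell(\mu)$, contradicting the hypothesis $\ell(\mu) < \lambda_1$. The downward-inheritance step is the only nontrivial observation, and it is essentially formal, so I do not anticipate a significant obstacle.
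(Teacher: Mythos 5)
Your proof is correct and is essentially the paper's argument: reduce to the cone ${\sf Kostka}_{\ell(\mu)}^{\mathbb Z}$ (noting $\ell(\lambda)\le\ell(\mu)$ by dominance) and apply Theorem~\ref{thm:bound} there. The paper's one-line proof merely cites the compatibility of Hilbert bases under the embedding ${\sf Kostka}_{r'}^{\mathbb Z}\subseteq{\sf Kostka}_r^{\mathbb Z}$; you supply the details it leaves implicit, and in particular you correctly identify and prove the direction actually needed (zero-padding a nontrivial decomposition upward, i.e., irreducibility in ${\sf Kostka}_r^{\mathbb Z}$ implies irreducibility in ${\sf Kostka}_{\ell(\mu)}^{\mathbb Z}$), whereas the paper's wording states the inclusion in the looser, opposite-sounding form.
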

\begin{proof}
The Hilbert basis of ${\sf Kostka}_{r'}^{\mathbb Z}$ includes into that of ${\sf Kostka}_r^{\mathbb Z}$ whenever $r'\leq r$.
\end{proof}

An \emph{extremal ray} is a one-dimensional face of the cone ${\sf Kostka}_r$. Every element along such a ray is not the sum of two nonparallel elements of ${\sf Kostka}_r$; hence the first lattice point along such a ray is not the sum of two nontrivial elements of ${\sf Kostka}_r^{\mathbb{Z}}$. Thus, we have an injection from the set of extremal rays to the Hilbert basis by taking this first lattice point. 
We classify the extremal rays of ${\sf Kostka}_r$, thereby describing this subset of the Hilbert basis. 

\begin{theorem}
\label{thm:rays}
Suppose $(\lambda,\mu)\in {\sf Kostka}_{r}^{\mathbb Z}$. $(\lambda,\mu)$ lies on an extremal ray of ${\sf Kostka}_{r}$ if and only if it is a positive rational multiple of one of the following: 
$$
\left(\underbrace{a,\hdots,a}_{b+\ell},0,\hdots,0; \underbrace{a,\hdots,a}_{\ell},\underbrace{b,\hdots,b}_{a},0\hdots,0\right)
$$
where $a,b,\ell$ are integers such that $r\ge a+\ell\ge a\ge b>0$. 
\end{theorem}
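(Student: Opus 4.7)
The plan is to analyze the tangent space to the cone ${\sf Kostka}_r$ at $(\lambda,\mu)$: a lattice point lies on an extremal ray if and only if the equations arising from the tight constraints (block equalities within constant runs of $\lambda$ and $\mu$, trailing-zero equalities, tight dominance inequalities, and the sum equation) cut out a one-dimensional subspace, necessarily $\mathbb{R}\cdot(\lambda,\mu)$.

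For sufficiency, fix a triple $(a,b,\ell)$ with $r\ge a+\ell\ge a\ge b>0$ and set $(\lambda,\mu)$ as in the statement. A direct computation of $f(t):=\sum_{i\le t}(\lambda_i-\mu_i)$ shows $f(t)=0$ precisely for $t\in\{1,\ldots,\ell\}\cup\{\ell+a,\ldots,r\}$, so a tangent vector collapses by block constancy to three scalars: $\Lambda=(x^{b+\ell},0^{r-b-\ell})$ and $M=(y^\ell,z^a,0^{r-\ell-a})$. Tight dominance at any $t\in[1,\ell]$ then gives $x=y$, while tight dominance at $t=\ell+a$ (or the sum equation when $a+\ell=r$) gives $bx=az$, so $(\Lambda,M)=c(\lambda,\mu)$ for a scalar $c$; the boundary cases $\ell=0$ and $a=b$ are handled by the same reduction.

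For necessity, suppose $(\lambda,\mu)$ lies on an extremal ray and write $\lambda=(c_1^{d_1},\ldots,c_p^{d_p},0^{d_{p+1}})$ with $c_1>\cdots>c_p>0$. The crucial step is to show $p=1$. If $p\ge 2$ and $\lambda_r>0$, then the $t=r-1$ dominance inequality combined with $|\lambda|=|\mu|$ forces $\mu_r>0$, and the identity $(\lambda,\mu)=(\mathbf{1},\mathbf{1})+(\lambda-\mathbf{1},\mu-\mathbf{1})$ with $\mathbf{1}=(1,\ldots,1)\in\mathbb{Z}^r$ exhibits a non-parallel decomposition of $(\lambda,\mu)$ in the cone (since $\lambda$ is not a multiple of $\mathbf{1}$), contradicting extremality. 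If $p\ge 2$ and $\lambda_r=0$, define $\Delta\in\mathbb{Z}^r$ by $\Delta_i=d_2$ on $[1,d_1]$, $\Delta_i=-d_1$ on $(d_1,d_1+d_2]$, and $\Delta_i=0$ elsewhere: then $\Delta$ respects the block structure of $\lambda$, satisfies $\sum\Delta_i=0$, and its partial sums vanish outside $(0,d_1+d_2)$, so $(\Delta,0)$ is a valid tangent direction (contradicting extremality) unless the tight dominance set $T$ meets $(0,d_1+d_2)$. In that exceptional case, $f(t_0)=0$ at some $t_0\in T\cap(0,d_1+d_2)$ forces $\mu$ to match $\lambda$ on an initial segment (in particular $\mu_1=c_1$ when $t_0\le d_1$), and peeling off this common prefix reduces to a smaller Kostka cone. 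Iteration terminates either in a contradiction of extremality or in the equality $\lambda=\mu$ with $p\ge 2$, which decomposes as $(\lambda^{(1)},\lambda^{(1)})+(\lambda^{(2)},\lambda^{(2)})$ for any splitting $\lambda=\lambda^{(1)}+\lambda^{(2)}$ into two distinct rectangles (e.g. by horizontal slicing at height $c_p$), again contradicting extremality.

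Having shown $\lambda=(a^d,0^{r-d})$, applying the same tangent analysis to $\mu$'s block parameters forces $\mu=(a^\ell,b^m,0^{r-\ell-m})$ for some $\ell\ge 0$, $m\ge 1$, $a\ge b>0$ satisfying $a(d-\ell)=bm$. Setting $A=m$, $B=d-\ell$, $L=\ell$ then exhibits $(\lambda,\mu)=(a/A)(A^{B+L},0^{r-B-L};A^L,B^A,0^{r-A-L})$ as a positive rational multiple of the theorem's vector, with $r\ge A+L\ge A\ge B>0$ following from $\ell+m\le r$, $a\ge b$, and $d>\ell$. The main obstacle is making the iterative peeling in the case $\lambda_r=0$ fully rigorous: configurations where several tight dominance positions conspire to block all simple perturbations require careful combinatorial tracking to ensure each reduction strictly shrinks the problem while preserving extremality.
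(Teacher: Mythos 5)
Your sufficiency argument is sound and takes a genuinely different route from the paper's: you use the standard polyhedral fact that a nonzero point of a pointed cone lies on an extremal ray if and only if the subspace cut out by its tight constraints is one-dimensional, whereas the paper (Proposition~\ref{prop:firstextremal}) argues directly that any decomposition of the candidate vector into two cone elements has parallel summands. Your computation of the tight set and the reduction to $x=y$, $bx=az$ checks out (the degenerations $a=b$, $\ell=0$ need only a word), and the last step --- that $\lambda$ rectangular forces $\mu=(a^{\ell},b^{m},0,\dots)$ --- can indeed be completed by the same dimension count, though you only assert it.

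The necessity direction, however, has a genuine gap exactly where the paper does its main work. First, the claim that a tight index $t_0\in(0,d_1+d_2)$ ``forces $\mu$ to match $\lambda$ on an initial segment'' is false when $t_0$ lies strictly inside the second block: for $\lambda=(4,2,2,0)$, $\mu=(3,3,1,1)$ one has $f(2)=0$ but $\mu_1\neq\lambda_1$, so ruling out such configurations requires invoking extremality through a further construction which you do not supply. Second, even when the matching does hold ($t_0\le d_1$) and one grants that peeling a common constant prefix preserves extremality (itself a lifting argument you only gesture at), your dichotomy ``iteration terminates in a contradiction of extremality or in $\lambda=\mu$'' fails: peeling can exhaust the first block and land on a truncation that is a legitimate extremal ray. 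For instance $\lambda=(3,2,2,0)$, $\mu=(3,2,1,1)$ truncates after one peel to $\bigl((2,2,0),(2,1,1)\bigr)$, which is extremal in ${\sf Kostka}_3$, so the iteration stops with neither terminal even though the original $\lambda$ has two part sizes (and is in fact not extremal --- its tight-constraint space is two-dimensional, but your procedure never detects this). The missing ingredient is precisely the paper's Claim~\ref{claim:Jan26ttt}: at a tight index $j$ with $\lambda_{j+1}>0$ and $\lambda_{j+1}<\lambda_j$, the perturbation $(\lambda,\mu)\pm\epsilon(1^j,1^j)$ already contradicts extremality, forcing $\lambda_{j+1}=\lambda_j$; this, together with the decomposition built from $(1^j)\concat\frac{1}{\lambda_j}\lambda_{>j}$ and the concatenation/induction machinery of Proposition~\ref{prop:thefinalprop} (case (B)), is what makes the reduction to ${\sf Kostka}_{r-j}$ rigorous. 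You flag this obstacle yourself at the end; it is not a technicality but the core of the proof, so the argument as proposed is incomplete.
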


Counting possibilities for $b$, $a$, and $a+\ell$ above immediately gives: 

\begin{corollary}
The number of extremal rays of ${\sf Kostka}_r$ is $\binom{r}{3}+\binom{r}{2}+\binom{r}{1}$.
\end{corollary}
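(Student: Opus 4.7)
The plan is to apply Theorem~\ref{thm:rays} and carefully determine when two distinct parameter triples $(a, b, \ell)$ (with $r \geq a + \ell \geq a \geq b > 0$) yield vectors lying on the same extremal ray, i.e., are positive scalar multiples of each other. Setting $c := a + \ell$, the constraints become $1 \leq b \leq a \leq c \leq r$.

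First I would handle the degenerate case $a = b$: the parametrizing vector collapses to $(\lambda, \mu)$ with $\lambda = \mu = (a^c, 0, \ldots, 0)$, which depends only on $c$ up to positive scaling. Thus all degenerate triples yield exactly $r$ rays, indexed by $c \in \{1, \ldots, r\}$, contributing the $\binom{r}{1}$ summand.

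Next, I would show that when $a > b$, distinct triples give distinct rays. In any positive-scalar equivalence between two such vectors, the block structure of $\mu$---top positive value appearing in exactly $\ell = c - a$ positions, then second positive value in exactly $a$ positions---forces the two triples to share $c - a$ and $a$, so the scaling factor equals $1$ and the full triples agree. Moreover, no rigid ray ($a > b$) coincides with a degenerate one, since $\lambda = \mu$ fails when $a > b$. So rigid rays biject with triples satisfying $1 \leq b < a \leq c \leq r$; splitting according to whether $a = c$ or $a < c$ yields $\binom{r}{2}$ and $\binom{r}{3}$ rays, respectively.

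Summing the three contributions gives the asserted count $\binom{r}{3} + \binom{r}{2} + \binom{r}{1}$. The only point that takes any work is the injectivity claim in the rigid case; everything else is elementary enumeration.
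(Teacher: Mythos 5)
Your proposal is correct and takes essentially the same approach as the paper: the paper's one-line proof derives the count from Theorem~\ref{thm:rays} by counting the possibilities for $b$, $a$, $a+\ell$, implicitly identifying redundant parametrizations. Your write-up simply makes explicit the deduplication the paper leaves to the reader, namely the collapse of all $a=b$ triples to a single ray per $c=a+\ell$ and the injectivity of the parametrization when $a>b$ (via the multiplicity/value coincidence forcing the scale factor to be $1$).
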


In contrast, see Table \ref{table:growth}.  For any $s\le r$ and $|\lambda| = s$, $(\lambda,1^s)$ is in the Hilbert basis. So the number of Hilbert basis elements of ${\sf Kostka}_r^\mathbb{Z}$ is bounded below by the sum of partition functions (\url{http://oeis.org/A000070}).
What is the actual asymptotic growth?

\begin{table}[h]
\centering
\begin{tabular}{ c|c|c||c|c|c }
$r$ & $\#$ extremal rays & $\#$ Hilbert basis & $r$ & $\#$ extremal rays & $\#$ Hilbert basis \\\hline
$1$ & $1$ & $1$  & $10$ & $175$ & $3093$ \\
$2$ & $3$ & $3$ & $11$ & $231$ & $6876$ \\
$3$ & $7$ & $8$ & $12$ & $298$ & $14133$ \\
$4$ & $14$ & $19$ & $13$ & $377$ & $29788$ \\
$5$ & $25$ & $50$ & $14$ & $469$ & $59935$ \\
$6$ & $41$ & $111$ & $15$ & $575$ & $118893$ \\
$7$ & $63$ & $281$ & $16$ & $696$ & $232972$ \\
$8$ & $92$ & $635$ & $17$ & $833$ & $457982$  \\
$9$ & $129$ & $1443$ & & & 
\end{tabular}
\caption{Counting extremal rays and Hilbert basis elements of ${\sf Kostka}_{r}^\mathbb{Z}$.}\label{table:growth}
\end{table}

Kostka coefficients appear in the representation theory of $G = SL_r$. For each partition $\lambda$ with at most $r$ parts
 there is a finite-dimensional representation $V_\lambda$ of $G$, and $K_{\lambda,\mu}$ records the dimension of the largest subspace of $V_\lambda$ on which a fixed maximal torus $T\subset G$ acts with character described by $\mu$. For any reductive Lie group $G$, one 
defines the weight space dimensions $K_{\lambda,\mu}^G$ and one may study the cone of solutions $(\lambda,\mu)$ to the problem $K_{\lambda,\mu}^G > 0$. During preparation of this article, M.~Besson, S.~Jeralds, and the second author \cite{Kiers} proved a generalization of
Theorem~\ref{thm:rays}. Those methods involve analysis of dominant weight polytopes in arbitrary Lie type. Our argument uses no Lie theory.

\subsection{Organization} 
Section~\ref{sec:3} introduces the key constructions of this paper. There we use the \emph{Gale-Ryser theorem}, and specifically, the \emph{canonical matrix} $A(\lambda,\mu)$ which is a
$\{0,1\}$-matrix with prescribed row and column marginals. We define the auxiliary matrix $A^*(\lambda,\mu)$
from $A(\lambda,\mu)$. This permits us to define the \emph{KGR graph} $G(\lambda,\mu)$ and \emph{conservative
subtrees}. Existence of such a subtree rules out $(\lambda,\mu)$ being a Hilbert basis element; this is Theorem~\ref{thm:mainbigone} (see Theorem~\ref{thm:bigone}).  Our application to the proof of
the Width Bound (Theorem~\ref{thm:bound}) is in Section~\ref{sec:2.2}. In that section we also prove 
that existence of a conservative subtree
is equivalent to $A(\lambda,\mu)$ being reducible (Theorem~\ref{thm:someifftrue}). 
Section~\ref{sec:4} proves our result on extremal rays of 
${\sf Kostka}_{r}$ (Theorem~\ref{thm:rays}). Section~\ref{sec:5} discusses some conjectures that were posed
in an earlier version of this preprint. All of these conjectures have since been resolved (two of which by J.~Kim \cite{K:21}) and one, which concerns an extension to Littlewood-Richardson coefficients, has been proved false 
(Theorem~\ref{thm:counterex}). Appendix~\ref{sec:2} proves Theorem~\ref{thm:complexity}, by reducing the \emph{Subset Sum problem} to {\tt KostkaHilbert}.

\section{The Gale-Ryser theorem, contingency tables, and directed graphs}\label{sec:3}

\subsection{KGR graphs, and conservative subtrees} \label{sec:3.1}
Let ${\sf Par}_t$ be the set of partitions with at most $t$ nonzero parts.
Suppose $\alpha\in {\sf Par}_r$ and $\beta\in {\sf Par}_s$. Let ${\sf GR}(\alpha,\beta)$ be the set of $\{0,1\}$-matrices (contingency tables) $(A_{ij})_{1\leq i\leq r, 1\leq j\leq s}$ 
of dimension $r\times s$ such that 
\[\sum_{j=1}^s a_{ij} = \alpha_i, \ 1\leq i\leq r; \ \ \ \ \sum_{i=1}^r a_{ij} = \beta_j, \ 1\leq j\leq s.\]

The \emph{Gale-Ryser theorem} states that
\begin{equation}
\label{eqn:Gale-Ryser}
{\sf GR}(\alpha,\beta)\neq\emptyset \iff \beta\leq_{\sf Dom} \alpha'. 
\end{equation}
One reference is R.~Brualdi's \cite{Brualdi:older}, who recounts that (\ref{eqn:Gale-Ryser})
was independently obtained in 1957, by D.~Gale \cite{Gale} and H.~J.~Ryser \cite{Ryser}. The former used ``his supply-demand
theorem for network flows'' while the latter constructs a matrix in ${\sf GR}(\alpha,\beta)$, as described below.

\begin{proposition}
\label{thm:0-1}
$(\lambda,\mu)\in \sf{Kostka}_r^{\mathbb Z}$ if and only if ${\sf GR}(\mu,\lambda')\neq \emptyset$.
\end{proposition}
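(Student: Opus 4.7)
The plan is to derive the proposition as an immediate consequence of the Gale-Ryser theorem (\ref{eqn:Gale-Ryser}) together with the classical fact that conjugation reverses dominance order: for partitions of a common size, $\lambda \geq_{\sf Dom} \mu$ if and only if $\lambda' \leq_{\sf Dom} \mu'$ (a standard exercise; cf.~\cite{ECII}). I would record this lemma at the outset.

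With that in hand, apply (\ref{eqn:Gale-Ryser}) with $\alpha = \mu$ and $\beta = \lambda'$ to conclude that ${\sf GR}(\mu,\lambda') \neq \emptyset$ if and only if $\lambda' \leq_{\sf Dom} \mu'$. For the forward direction, if $(\lambda,\mu) \in {\sf Kostka}_r^{\mathbb Z}$, then $|\lambda|=|\mu|$ and $\lambda \geq_{\sf Dom} \mu$, so $\lambda' \leq_{\sf Dom} \mu'$ and Gale-Ryser supplies a matrix. For the converse, any matrix in ${\sf GR}(\mu,\lambda')$ forces $|\mu|=|\lambda'|=|\lambda|$ by summing entries two ways, while Gale-Ryser yields $\lambda' \leq_{\sf Dom} \mu'$, hence $\lambda \geq_{\sf Dom} \mu$. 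The length inequality $\ell(\lambda) = \lambda'_1 \leq \mu'_1 = \ell(\mu) \leq r$ (the first inequality coming from $t=1$ in $\lambda' \leq_{\sf Dom} \mu'$, and the second from $\mu \in {\sf Par}_r$) then ensures both partitions lie in ${\sf Par}_r$, completing membership in ${\sf Kostka}_r^{\mathbb Z}$.

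The main obstacle is essentially nonexistent: once one observes that Gale-Ryser phrases its inequality on \emph{conjugate} partitions while Kostka-cone membership phrases the analogous inequality on the partitions themselves, the conjugation-reverses-dominance lemma is the only bridge needed. No further combinatorial input is required.
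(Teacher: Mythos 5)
Your proposal is correct and matches the paper's argument: both rest on the Gale--Ryser theorem (\ref{eqn:Gale-Ryser}) applied with $\alpha=\mu$, $\beta=\lambda'$, together with the fact that conjugation reverses dominance order, so that $(\lambda,\mu)\in {\sf Kostka}_r^{\mathbb Z}\iff \mu\leq_{\sf Dom}\lambda\iff \lambda'\leq_{\sf Dom}\mu'\iff {\sf GR}(\mu,\lambda')\neq\emptyset$. The extra bookkeeping you add about lengths and sizes in the converse direction is harmless but not needed beyond what the paper records.
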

\begin{proof}
By (\ref{eqn:itistextbook}), and that $\leq_{\sf Dom}$ is an anti-automorphism under conjugation \cite[Section~7.2]{ECII},
\[(\lambda,\mu)\in \sf{Kostka}^{\mathbb Z}_r \iff \mu\leq_{\sf Dom} \lambda \iff \lambda'\leq_{\sf Dom} \mu'.\]
Hence by (\ref{eqn:Gale-Ryser}), this is equivalent to ${\sf GR}(\mu,\lambda')\neq \emptyset$.
\end{proof}

We also follow R.~Brualdi \cite{Brualdi}. Ryser gave an algorithm that takes $(\lambda,\mu)\in \sf{Kostka}_r^{\mathbb Z}$ as an input, and 
outputs a $\{0,1\}$-matrix $A(\lambda,\mu)$ of dimension $r\times \lambda_1$ that exhibits ${\sf GR}(\mu,\lambda')\neq \emptyset$, as guaranteed
by Proposition~\ref{thm:0-1}. Call this matrix the \emph{canonical matrix} for $(\lambda,\mu)$.  

We need to recall Ryser's algorithm. First define $A(\mu)$ to be the $r\times \lambda_1$
dimension $\{0,1\}$-matrix whose $i$-th row has $\mu_i$ many $1$'s placed flush-left. Let $\beta=\lambda'$ and $s=\lambda_1$.
Shift $\beta_s$ rightmost $1$'s in each row of $A(\mu)$ to column $s$, by choosing $1$'s in rows with the largest sum, and breaking
ties by choosing $1$'s in rows further south. Let $A'(\mu)$ be the submatrix of $A(\mu)$ using the first $s-1$ columns. Repeat the
shifting process, now to column $s-1$ so that there are $\beta_{s-1}$ many $1$'s there. Continuing, Ryser's algorithm eventually
arrives at $A(\lambda,\mu)$. We refer the reader to \cite{Brualdi:older} and the original \cite{Ryser} for additional details.

\begin{example}
\label{exa:Jan8aaa}
If $\lambda=(8,7,7,7,3,2)$ and $\mu = (7,7,4,4,4,4,4)$, then
\begin{equation*}
A(\mu) = \begin{bmatrix}
1 & 1 & 1 & 1 & 1 & 1 & 1 & 0 \\
1 & 1 & 1 & 1 & 1 & 1 & 1 & 0 \\
1 & 1 & 1 & 1 & 0 & 0 & 0 & 0 \\
1 & 1 & 1 & 1 & 0 & 0 & 0 & 0 \\
1 & 1 & 1 & 1 & 0 & 0 & 0 & 0 \\
1 & 1 & 1 & 1 & 0 & 0 & 0 & 0 \\
1 & 1 & 1 & 1 & 0 & 0 & 0 & 0 
\end{bmatrix}, A(\lambda,\mu) = \begin{bmatrix}
1 & 1 & 1 & 1 & 1 & 1 & 1 & 0 \\
1 & 1 & 1 & 0 & 1 & 1 & 1 & 1 \\
1 & 1 & 1 & 0 & 1 & 0 & 0 & 0 \\
1 & 1 & 0 & 1 & 0 & 1 & 0 & 0 \\
1 & 1 & 0 & 1 & 0 & 1 & 0 & 0 \\
1 & 0 & 1 & 1 & 0 & 0 & 1 & 0 \\
0 & 1 & 1 & 0 & 1 & 0 & 1 & 0 
\end{bmatrix}.
\end{equation*}
\end{example} 
Here $\lambda'=(6,6,5,4,4,4,4,1)$, and the steps of Ryser's algorithm are
\begin{equation*}
A(\mu)\mapsto 
\begin{bmatrix}
1 & 1 & 1 & 1 & 1 & 1 & 1 & \aug& 0 \\
1 & 1 & 1 & 1 & 1 & 1 & 0 & \aug & 1 \\
1 & 1 & 1 & 1 & 0 & 0 & 0 &\aug & 0 \\
1 & 1 & 1 & 1 & 0 & 0 & 0 &\aug & 0 \\
1 & 1 & 1 & 1 & 0 & 0 & 0 &\aug & 0 \\
1 & 1 & 1 & 1 & 0 & 0 & 0 &\aug & 0 \\
1 & 1 & 1 & 1 & 0 & 0 & 0 &\aug & 0 
\end{bmatrix}\mapsto
\begin{bmatrix}
1 & 1 & 1 & 1 & 1 & 1 & \aug & 1 & 0 \\
1 & 1 & 1 & 1 & 1 & 0 & \aug & 1 & 1 \\
1 & 1 & 1 & 1 & 0 & 0 & \aug & 0 & 0 \\
1 & 1 & 1 & 1 & 0 & 0 & \aug & 0 & 0 \\
1 & 1 & 1 & 1 & 0 & 0 & \aug & 0 & 0 \\
1 & 1 & 1 & 0 & 0 & 0 & \aug & 1 & 0 \\
1 & 1 & 1 & 0 & 0 & 0 & \aug & 1 & 0 
\end{bmatrix}
\mapsto
\begin{bmatrix}
1 & 1 & 1 & 1 & 1 & \aug & 1 & 1 & 0 \\
1 & 1 & 1 & 1 & 0 & \aug & 1 &  1 & 1 \\
1 & 1 & 1 & 1 & 0 & \aug & 0 &  0 & 0 \\
1 & 1 & 1 & 0 & 0 & \aug & 1 &  0 & 0 \\
1 & 1 & 1 & 0 & 0 & \aug & 1 &  0 & 0 \\
1 & 1 & 1 & 0 & 0 & \aug & 0 & 1 & 0 \\
1 & 1 & 1 & 0 & 0 & \aug & 0 &  1 & 0 
\end{bmatrix}
\end{equation*}
\begin{equation*}
\ \ \ \ \ \ \ \ \ \ \ \ \mapsto 
\begin{bmatrix}
1 & 1 & 1 & 1 & \aug & 1  & 1 & 1 & 0 \\
1 & 1 & 1 & 0 & \aug & 1 & 1 &  1 & 1 \\
1 & 1 & 1 & 0 & \aug & 1  & 0 &  0 & 0 \\
1 & 1 & 1 & 0 & \aug & 0  & 1 &  0 & 0 \\
1 & 1 & 1 & 0 & \aug & 0  & 1 &  0 & 0 \\
1 & 1 & 1 & 0 & \aug & 0  & 0 & 1 & 0 \\
1 & 1 & 0 & 0 & \aug & 1 & 0 &  1 & 0 
\end{bmatrix}
\mapsto 
\begin{bmatrix}
1 & 1 & 1 & \aug & 1 & 1  & 1 & 1 & 0 \\
1 & 1 & 1 & \aug & 0 & 1 & 1 &  1 & 1 \\
1 & 1 & 1 & \aug & 0 & 1  & 0 &  0 & 0 \\
1 & 1 & 0 & \aug & 1 & 0  & 1 &  0 & 0 \\
1 & 1 & 0 & \aug & 1 & 0  & 1 &  0 & 0 \\
1 & 1 & 0 & \aug & 1 & 0  & 0 & 1 & 0 \\
1 & 1 & 0 & \aug & 0 & 1 & 0 &  1 & 0 
\end{bmatrix}
\mapsto 
\begin{bmatrix}
1 & 1 & \aug & 1 & 1 & 1  & 1 & 1 & 0 \\
1 & 1 & \aug & 1 & 0 & 1 & 1 &  1 & 1 \\
1 & 1 & \aug & 1 & 0 & 1  & 0 &  0 & 0 \\
1 & 1 & \aug & 0 & 1 & 0  & 1 &  0 & 0 \\
1 & 1 & \aug & 0 & 1 & 0  & 1 &  0 & 0 \\
1 & 0 & \aug & 1 & 1 & 0  & 0 & 1 & 0 \\
1 & 0 & \aug & 1 & 0 & 1 & 0 &  1 & 0 
\end{bmatrix}
\end{equation*}
\begin{equation*}
\ \ \ \ \ \ \ \ \ \ \ \ \ \mapsto 
\begin{bmatrix}
1 & \aug & 1 & 1 & 1 & 1  & 1 & 1 & 0 \\
1 & \aug & 1 & 1 & 0 & 1 & 1 &  1 & 1 \\
1 & \aug & 1 & 1 & 0 & 1  & 0 &  0 & 0 \\
1 & \aug & 1 & 0 & 1 & 0  & 1 &  0 & 0 \\
1 & \aug & 1 & 0 & 1 & 0  & 1 &  0 & 0 \\
1 & \aug & 0 & 1 & 1 & 0  & 0 & 1 & 0 \\
0 & \aug & 1 & 1 & 0 & 1 & 0 &  1 & 0 
\end{bmatrix}\mapsto A(\lambda,\mu).
\end{equation*}

If $\mu$ is a partition then $\mu_i-\mu_{i+1}\geq 0$ for all $i$.  Define the column vector $\mu^*\in {\mathbb Z}_{\geq 0}^r$ by 
\[\mu^*_i = \mu_i - \mu_{i+1},\] 
where $\mu_{r+1}:=0$.  Similarly, define $A^*(\lambda,\mu)$ to be the matrix with the same dimensions as $A(\lambda,\mu)$, with 
\begin{equation}
\label{eqn:Jan12aaa}
A^*(\lambda,\mu)_{i,j} = A(\lambda,\mu)_{i,j} - A(\lambda,\mu)_{i+1,j},
\end{equation}
where $A(\lambda,\mu)_{r+1,j}:= 0$.  

\begin{example}
Given the same choices for $\lambda$ and $\mu$ as before, \begin{equation*}
A^*(\lambda,\mu) = \begin{bmatrix}
0 & 0 & 0 & 1 & 0 & 0 & 0 &-1 \\
0 & 0 & 0 & 0 & 0 & 1 & 1 & 1 \\
0 & 0 & 1 &-1 & 1 &-1 & 0 & 0 \\
0 & 0 & 0 & 0 & 0 & 0 & 0 & 0 \\
0 & 1 &-1 & 0 & 0 & 1 &-1 & 0 \\
1 &-1 & 0 & 1 &-1 & 0 & 0 & 0 \\
0 & 1 & 1 & 0 & 1 & 0 & 1 & 0 
\end{bmatrix}
\end{equation*}
\end{example}

\begin{definition}
The \emph{Kostka-Gale-Ryser (KGR) graph} $G(\lambda,\mu)$ 
is a directed graph obtained from $A^*(\lambda,\mu)$ by drawing an arc from each
\begin{itemize}
\item[(I)] $-1$ to the rightmost $1$ to the left of it in the same row 
\item[(II)] $1$ to the $-1$ in the same column (if it exists).  
\end{itemize}
\end{definition}
The well-definedness of (I) is established by Lemma~\ref{alternation}.

\begin{example}
Below $G(\lambda,\mu)$ is drawn in blue, overlaid on top of $A^*(\lambda,\mu)$.
\begin{equation*}
A^*(\lambda,\mu) = \begin{bmatrix}
\rn{11}{0} & \rn{12}{0} & \rn{13}{0} & \rn{14}{1} & \rn{15}{0} & \rn{16}{0} & \rn{17}{0} &\rn{18}{-1} \\
\rn{21}{0} & \rn{22}{0} & \rn{23}{0} & \rn{24}{0} & \rn{25}{0} & \rn{26}{1} & \rn{27}{1} & \rn{28}{1} \\
\rn{31}{0} & \rn{32}{0} & \rn{33}{1} &\rn{34}{-1} & \rn{35}{1} &\rn{36}{-1} & \rn{37}{0} & \rn{38}{0} \\
\rn{41}{0} & \rn{42}{0} & \rn{43}{0} & \rn{44}{0} & \rn{45}{0} & \rn{46}{0} & \rn{47}{0} & \rn{48}{0} \\
\rn{51}{0} & \rn{52}{1} &\rn{53}{-1} & \rn{54}{0} & \rn{55}{0} & \rn{56}{1} &\rn{57}{-1} & \rn{58}{0} \\
\rn{61}{1} &\rn{62}{-1} & \rn{63}{0} & \rn{64}{1} &\rn{65}{-1} & \rn{66}{0} & \rn{67}{0} & \rn{68}{0} \\
\rn{71}{0} & \rn{72}{1} & \rn{73}{1} & \rn{74}{0} & \rn{75}{1} & \rn{76}{0} & \rn{77}{1} & \rn{78}{0} 
\end{bmatrix}
\end{equation*}

\begin{tikzpicture}[overlay,remember picture]
        \draw [->, color=blue, line width=0.5mm] (18) -- (14);
        \draw [->, color=blue, line width=0.5mm] (14) -- (34);
         \draw [->, color=blue, line width=0.5mm] (34) -- (33);
          \draw [->, color=blue, line width=0.5mm] (33) -- (53);
           \draw [->, color=blue, line width=0.5mm] (53) -- (52);
 \draw [->, color=blue, line width=0.5mm] (52) -- (62);
 \draw [->, color=blue, line width=0.5mm] (62) -- (61);
 
 \draw [->, color=blue, line width=0.5mm] (27) -- (57);
  \draw [->, color=blue, line width=0.5mm] (57) -- (56);
   \draw [->, color=blue, line width=0.5mm] (26) -- (36);
     \draw [->, color=blue, line width=0.5mm] (36) -- (35);
      \draw [->, color=blue, line width=0.5mm] (35) -- (65);
      \draw [->, color=blue, line width=0.5mm] (65) -- (64); 
 \draw [->, color=blue, line width=0.5mm] (28) -- (18);
   \draw [->, color=blue, line width=0.5mm] (64) -- (34);
   \draw [->, color=blue, line width=0.5mm] (64) -- (34);
\draw [->, color=blue, line width=0.5mm] (56) -- (36);

\draw [->, color=blue, line width=0.5mm] (72) -- (62);
\draw [->, color=blue, line width=0.5mm] (73) -- (53);
\draw [->, color=blue, line width=0.5mm] (75) -- (65);
\draw [->, color=blue, line width=0.5mm] (77) -- (57);
    \end{tikzpicture}

\vspace{-.2in}
\end{example}
Define a \emph{source} of a directed graph to be a vertex $y$ with no arcs of the form $x\to y$. Similarly, a \emph{sink}
is a vertex $x$ without arcs $x\to y$. Hence $x$ may be both a source and sink if (and only if) it is an isolated vertex. In addition, \emph{connected}
will mean with respect to the underlying \emph{undirected} graph structure. A \emph{subtree} of a directed graph means
a connected subgraph with no undirected cycles.
 
\begin{definition}
A connected nontrivial subgraph $G'\subset G(\lambda,\mu)$ is a \emph{conservative subtree} if whenever a vertical arc 
appears in $G'$, all vertical arcs of $G(\lambda,\mu)$ in the same column do, and if one of the following is true:
\begin{enumerate}
\item[(C.1)]
$G'$ is a connected component of $G(\lambda,\mu)$ (with respect to the underlying undirected graph structure).
\item[(C.2)]
The unique sink of $G'$ is a $-1$ in $A^*(\lambda,\mu)$, one source of $G'$ is a $1$ in $A^*(\lambda,\mu)$ in the same row as the sink, and all other sources of $G'$ are sources of $G(\lambda,\mu)$.
\end{enumerate}
\end{definition}

\begin{example}
We have marked a conservative subtree of $G(\lambda,\mu)$ in red.
\begin{equation*}
\begin{bmatrix}
\rn{11}{0} & \rn{12}{0} & \rn{13}{0} & \rn{14}{1} & \rn{15}{0} & \rn{16}{0} & \rn{17}{0} &\rn{18}{-1} \\
\rn{21}{0} & \rn{22}{0} & \rn{23}{0} & \rn{24}{0} & \rn{25}{0} & \rn{26}{1} & \rn{27}{1} & \rn{28}{1} \\
\rn{31}{0} & \rn{32}{0} & \rn{33}{1} &\rn{34}{-1} & \rn{35}{1} &\rn{36}{-1} & \rn{37}{0} & \rn{38}{0} \\
\rn{41}{0} & \rn{42}{0} & \rn{43}{0} & \rn{44}{0} & \rn{45}{0} & \rn{46}{0} & \rn{47}{0} & \rn{48}{0} \\
\rn{51}{0} & \rn{52}{1} &\rn{53}{-1} & \rn{54}{0} & \rn{55}{0} & \rn{56}{1} &\rn{57}{-1} & \rn{58}{0} \\
\rn{61}{1} &\rn{62}{-1} & \rn{63}{0} & \rn{64}{1} &\rn{65}{-1} & \rn{66}{0} & \rn{67}{0} & \rn{68}{0} \\
\rn{71}{0} & \rn{72}{1} & \rn{73}{1} & \rn{74}{0} & \rn{75}{1} & \rn{76}{0} & \rn{77}{1} & \rn{78}{0} 
\end{bmatrix}
\end{equation*}

\begin{tikzpicture}[overlay,remember picture]
        \draw [->, color=red, line width=0.7mm] (18) -- (14);
        \draw [->, color=red, line width=0.7mm] (14) -- (34);
         \draw [->, color=red, line width=0.7mm] (34) -- (33);
          \draw [->, color=red, line width=0.7mm] (33) -- (53);
           \draw [->, color=red, line width=0.7mm] (53) -- (52);
 \draw [->, color=red, line width=0.7mm] (52) -- (62);
 \draw [->, color=blue, line width=0.5mm] (62) -- (61);
 
 \draw [->, color=blue, line width=0.5mm] (27) -- (57);
  \draw [->, color=blue, line width=0.5mm] (57) -- (56);
   \draw [->, color=blue, line width=0.5mm] (26) -- (36);
     \draw [->, color=blue, line width=0.5mm] (36) -- (35);
      \draw [->, color=blue, line width=0.5mm] (35) -- (65);
      \draw [->, color=blue, line width=0.5mm] (65) -- (64); 
 \draw [->, color=red, line width=0.7mm] (28) -- (18);
   \draw [->, color=red, line width=0.7mm] (64) -- (34);
\draw [->, color=blue, line width=0.5mm] (56) -- (36);

\draw [->, color=red, line width=0.7mm] (72) -- (62);
\draw [->, color=red, line width=0.7mm] (73) -- (53);
\draw [->, color=blue, line width=0.5mm] (75) -- (65);
\draw [->, color=blue, line width=0.5mm] (77) -- (57);
    \end{tikzpicture}
\end{example}
\vspace{-.2in}

This is our main result about KGR graphs and conservative subtrees:
\begin{theorem}
\label{thm:bigone}
Let $(\lambda,\mu)\in {\sf Kostka}_r^{\mathbb Z}$. 

The undirected graph structure of
$G(\lambda,\mu)$ is a forest that is planar (with respect to the specified embedding).
Therefore, any  conservative subtree of $G(\lambda,\mu)$ is in fact a tree.

If $G(\lambda,\mu)$ has a conservative subtree, then $(\lambda,\mu)$ is not in the Hilbert basis of
${\sf Kostka}_{r}^{\mathbb Z}$.
\end{theorem}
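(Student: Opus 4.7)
The plan is to prove the three assertions of Theorem~\ref{thm:bigone} in turn.

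First, for the forest and planarity claims, the key input will be Lemma~\ref{alternation} together with its column analog: along any single row of $A^*(\lambda,\mu)$ the nonzero entries alternate $+1,-1,+1,-1,\ldots$ starting with $+1$, and the same alternation should hold column-wise (as a consequence of Ryser's construction of $A(\lambda,\mu)$). Given this, the type-I arcs within a single row are nested---each $-1$ points to the unique adjacent preceding $+1$---and type-II arcs within a column are similarly nested. For planarity in the grid embedding, any hypothetical crossing would pair a horizontal arc in row $i$ with a vertical arc in column $j$; the alternation forces the endpoints of these arcs to lie on the same side of the would-be crossing cell $(i,j)$, ruling this out. For the forest property, $G(\lambda,\mu)$ is bipartite on the $\pm 1$ sign split of $A^*$, so any undirected cycle must alternate $+1,-1,+1,-1,\ldots$; walking around such a cycle while tracking the row/column alternation should contradict the right-shifting order in Ryser's procedure. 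Forest plus connectedness then implies every conservative subtree is a tree.

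Second, for the reducibility claim, I plan to use the conservative subtree $T$ to produce a nontrivial decomposition $(\lambda,\mu) = (\lambda^{\bullet},\mu^{\bullet}) + (\lambda^{\circ},\mu^{\circ})$ of lattice points in ${\sf Kostka}_r^{\mathbb Z}$. The natural approach is to propagate ``swaps'' along the edges of $T$ applied to $A(\lambda,\mu)$: for each $+1$ vertex of $T$ at position $(i,j)$, interchange $(A_{i,j},A_{i+1,j})=(1,0) \mapsto (0,1)$, and for each $-1$ vertex symmetrically interchange $(0,1) \mapsto (1,0)$. This produces an alternate $\{0,1\}$-matrix $A'$ with the same column sums $\lambda'$ as $A(\lambda,\mu)$ but with modified row sums $\mu'$. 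From the pair $(A,A')$ one extracts a $\{0,1\}$-decomposition $A(\lambda,\mu) = B + C$ in which both $B$ and $C$ have weakly-decreasing row and column sums; setting $\mu^{\bullet}$ and $(\lambda^{\bullet})'$ to the row and column sums of $B$, and analogously for $(\lambda^{\circ},\mu^{\circ})$, Proposition~\ref{thm:0-1} then certifies that both summands are in ${\sf Kostka}_r^{\mathbb Z}$.

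The main obstacle will be verifying monotonicity of the row and column marginals of $B$ and $C$. The conservative conditions are designed precisely for this: (C.1) identifies a closed subtree whose internal contributions balance row by row, while (C.2) localizes an asymmetry between a source $+1$ and the sink $-1$ to a single row, bounding the deviation. The blanket requirement that, whenever any vertical arc of $T$ appears in a column, all vertical arcs of $G(\lambda,\mu)$ in that column do, is what ensures the column marginals of $B$ and $C$ remain weakly decreasing across columns (preventing ``column leakage''). I expect the detailed bookkeeping---especially ruling out the trivial cases $B=0$ or $B=A(\lambda,\mu)$ and verifying partition structure in every row and column---to occupy most of the proof.
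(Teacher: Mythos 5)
Your proposal has two genuine gaps, one in each half. For the forest/planarity part, your argument rests on the row alternation (Lemma~\ref{alternation}) plus a claimed ``column analog,'' and you assert that a crossing is ruled out because ``the alternation forces the endpoints of these arcs to lie on the same side of the would-be crossing cell.'' That is not true at the level of sign patterns: a horizontal arc in row $\ell'$ spanning column $c$ only forces $A^*_{\ell',c}=0$, and a vertical arc in column $c$ spanning row $\ell'$ is perfectly consistent with the column structure of Lemma~\ref{lemma:underhood} (which, by the way, does not alternate starting with $+1$; columns can read $-1$ then $+1$ top to bottom). Both local crossing configurations are compatible with the row and column constraints, so planarity cannot follow from them alone. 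The paper's proof instead invokes the dynamics of Ryser's algorithm: the horizontal arc forces $\mu^{(j-1)}$ to have a column of length $\ell'$, while the vertical arc forces a shortening/deletion whose lengths straddle $\ell'$, contradicting the monotonicity statements of Lemma~\ref{colschange} via Lemma~\ref{lemma:Jan18aaa}. Similarly, your forest argument (``walking around such a cycle \ldots should contradict the right-shifting order'') is left as an expectation; the actual argument is simply that no arc points rightward and every vertex has at most one outgoing arc, so an undirected cycle would force a vertex of out-degree two.

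For the reducibility part, your swap-and-split plan does not produce the decomposition you need. If you write $A(\lambda,\mu)=B+C$ as a sum of $\{0,1\}$-matrices and read $(\lambda^{\bullet})'$, $(\lambda^{\circ})'$ off the column sums of $B$ and $C$, you get $(\lambda^{\bullet})'+(\lambda^{\circ})'=\lambda'$, which does \emph{not} give $\lambda^{\bullet}+\lambda^{\circ}=\lambda$, since conjugation is not additive (e.g.\ $(1)'+(1)'=(2)\ne(1,1)'=(2)$ but $(1)+(1)=(2)$ while $\lambda=(1,1)$). The paper avoids this by decomposing along a \emph{subset of whole columns}: $S$ is the set of columns of $A^*(\lambda,\mu)$ meeting the conservative subtree, $\mu^{\bullet}$ is the sum of the corresponding columns of $A(\lambda,\mu)$, and $\lambda^{\bullet}$ is built from the columns $\{\lambda'_i: i\in S\}$, which \emph{is} additive in $\lambda$. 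The role of the conservative conditions is then exactly to make the cancellation argument of Proposition~\ref{lemma:hascon} work, i.e.\ $0\le v^*_i\le \mu^*_i$, so that both column sums are partitions (Proposition~\ref{prop:A*iffA}, Proposition~\ref{prop:AredHB}, Proposition~\ref{thm:0-1}); no entry swaps or auxiliary matrix $A'$ are needed, and it is unclear how your $B,C$ would be extracted from $(A,A')$ at all. Finally, nontriviality ($\emptyset\ne S\subsetneq[\lambda_1]$) is not mere bookkeeping: in case (C.2) one must observe that the first column of $A^*(\lambda,\mu)$ is a lone $1$, hence a sink not in the subtree, so $1\notin S$; in case (C.1) one uses that $S=[\lambda_1]$ would force the subtree to be all of $G(\lambda,\mu)$. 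As written, your proposal would not close either half without importing these missing ingredients.
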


The proof, given in Section~\ref{sec:3.3}, proceeds by building 
a number of additional properties of the graphs and $A^*(\lambda,\mu)$, which we also use later.

\subsection{Proof of the main result on KGR graphs (Theorem~\ref{thm:bigone})} \label{sec:3.3}
Let 
\[A^{(0)}=A(\mu), A^{(1)}, A^{(2)},\ldots, A^{(\lambda_1)}=A(\lambda,\mu)\] 
be the sequence of matrices
in Ryser's algorithm. 

\begin{lemma}
\label{lemma:eachAcolumn}
Each column of $A(\lambda,\mu)$ is one of the following:
\begin{enumerate}
\item
a single run of $1$'s, starting in the top row;
\item
a single run of $1$'s, not starting in the top row; or
\item
two runs of $1$'s, one starting in the top row.
\end{enumerate}
Furthermore, the leftmost column must fall into case $(1)$.
\end{lemma}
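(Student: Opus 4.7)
The plan is to induct on the number of steps of Ryser's algorithm. Let $M^{(0)}=A(\mu), M^{(1)}, \ldots, M^{(s)}=A(\lambda,\mu)$, where $s=\lambda_1$, be the intermediate matrices produced by the algorithm, with $M^{(k)}$ having its rightmost $k$ columns already frozen. I will maintain an invariant: after $k$ steps, (i) each of the $k$ frozen columns has one of the three shapes $(1)$--$(3)$ from the lemma, and (ii) the active $r\times(s-k)$ submatrix on the leftmost $s-k$ columns is flush-left, with row sums forming a partition $\nu^{(k)}$. The base case $k=0$ is immediate since $A(\mu)$ is flush-left with row sums $\mu$.

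For the inductive step I process column $j=s-k$. Because the active submatrix is flush-left with partition row sums $\nu := \nu^{(k)}$, column $j$ currently consists of a single top-block of $1$'s at rows $1, \ldots, m$ where $m=\nu'_j$; a total row-sum count together with the Gale--Ryser feasibility of the final matrix forces $m \leq \beta_j$, so Ryser adds $t := \beta_j - m \geq 0$ new $1$'s. These rows are selected from $\{m+1,\ldots,r\}$ by largest $\nu_i$, with ties broken southward. Partition $\{m+1,\ldots,r\}$ into \emph{tiers} of equal $\nu$-value; then the chosen rows are the first several tiers in full, plus the southernmost part of one further tier. Concatenated with the old top-block of column $j$, the completed column $j$ becomes either a single top-run (case $(1)$), a single bottom-run when $m=0$ (case $(2)$), or a top-run plus a disjoint bottom-run (case $(3)$) --- these exhaust the possibilities.

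To re-establish (ii) for the new active part on columns $\{1,\ldots,j-1\}$, observe that each shifted row has its rightmost active $1$ moved into column $j$, dropping its active row-sum by $1$; the old top-block rows simply lose their column-$j$ entry as that column becomes inactive, giving new sum $j-1$; all remaining rows are unchanged. Listing the new row-sums from top to bottom --- old top-block, then each fully-shifted tier, then the untouched prefix and shifted southern tail of the boundary tier, then the untouched lower tiers --- yields a weakly decreasing sequence. The only nontrivial inequalities to check are $j-1 \geq \nu_{m+1}-1$ (which holds since $\nu_{m+1}<j$) and the transitions at each tier boundary (which hold since distinct tier values differ by at least $1$). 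This partition-verification is routine but is where all the care is needed; everything else is definitional.

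Finally, the leftmost column ($k=s-1$) is a degenerate case: the active submatrix has a single column with $0/1$ entries whose total $1$-count equals $|\mu|-\sum_{c\geq 2}\lambda'_c=\lambda'_1=\beta_1$ automatically, so no shift occurs and column $1$ ends as a flush-top run, forcing case $(1)$.
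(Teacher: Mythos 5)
Your proof is correct and follows essentially the same route as the paper's: induction along the steps of Ryser's algorithm, maintaining that the $1$'s in the still-active left submatrix form a Young diagram (flush-left, partition row sums), and reading off cases (1)--(3) for the newly frozen column from the largest-sum/southward-tiebreak selection rule. Your tier analysis and the verification that the row sums stay a partition simply spell out what the paper compresses into ``Ryser's rule is defined precisely to ensure the removed $1$'s leave a partition'' and the tiebreak remark, so there is nothing to add.
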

\begin{proof}
We prove that the stated properties hold for every column of $A^{(i)}$. Clearly this is true for $i=0$. Now assume $i>0$ and the assertion holds for $A^{(i-1)}$. By construction, 
columns $\lambda_1-i+2,\lambda_1-i+3,\ldots,\lambda_1$ are the same as those of $A^{(i-1)}$. Hence by induction,
those columns satisfy the claim. In $A^{(i-1)}$ the $1$'s in columns $1,2,\ldots, \lambda_1-i$ form a Young diagram $\theta$.
Consider the region $R$ in those columns and strictly south of row $\lambda'_{\lambda_1-i+1}$; the $1$'s there also
form a sub-Young diagram $\underline\theta$ of $\theta$.  Ryser's rule $A^{(i-1)}\mapsto A^{(i)}$ is defined precisely to
ensure that the $1$'s removed from $\underline\theta$ leave a partition. Hence the claim holds for columns $1,2,\ldots,\lambda_1-i$
of $A^{(i)}$. Finally, it only remains to check column $\lambda_1-i+1$ of $A^{(i)}$. The point is that the only way to fall into cases
(2) or (3) is if one uses the tiebreak rule of Ryser's algorithm; this can only occur once. If no tiebreak occurs, case (1) occurs.
\end{proof}

\begin{definition}
\label{def:Alambdamureducible}
$A(\lambda,\mu)$ is \emph{reducible} if there exists some nontrivial subset $S$ of the columns such that the sum of the columns in $S$ forms a partition in ${\sf Par}_r$, and the sum of the columns not in $S$ forms a partition in ${\sf Par}_r$.
\end{definition}

\begin{proposition}
\label{prop:AredHB}
If $A(\lambda,\mu)$ is reducible, then $(\lambda,\mu)$ is reducible.
\end{proposition}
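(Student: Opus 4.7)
The plan is to produce explicit summands $(\lambda^{\bullet},\mu^{\bullet})$ and $(\lambda^{\circ},\mu^{\circ})$ by slicing $A(\lambda,\mu)$ along the column set $S$ and invoking Proposition~\ref{thm:0-1} on each piece. Let $A_S$ (resp.\ $A_{S^c}$) denote the $\{0,1\}$-submatrix of $A(\lambda,\mu)$ obtained by retaining only the columns indexed by $S$ (resp.\ by its complement), in their inherited left-to-right order.

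First I would set $\mu^{\bullet}$ and $\mu^{\circ}$ equal to the row-sum vectors of $A_S$ and $A_{S^c}$; by Definition~\ref{def:Alambdamureducible} these are partitions in ${\sf Par}_r$, and $\mu^{\bullet}+\mu^{\circ}=\mu$ because the rows of $A(\lambda,\mu)$ sum to $\mu$. Next I would define $\lambda^{\bullet}$ (resp.\ $\lambda^{\circ}$) to be the partition whose conjugate is the column-sum vector of $A_S$ (resp.\ $A_{S^c}$); because $\lambda'$ is weakly decreasing on $\{1,\dots,\lambda_1\}$, restricting to any subset preserves this, so these conjugates are genuine partitions, and $\ell(\lambda^{\bullet})=\lambda'_{\min S}\le\lambda'_1\le r$, so $\lambda^{\bullet},\lambda^{\circ}\in{\sf Par}_r$. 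By construction, $A_S\in{\sf GR}(\mu^{\bullet},(\lambda^{\bullet})')$ and $A_{S^c}\in{\sf GR}(\mu^{\circ},(\lambda^{\circ})')$, so Proposition~\ref{thm:0-1} places both $(\lambda^{\bullet},\mu^{\bullet})$ and $(\lambda^{\circ},\mu^{\circ})$ in ${\sf Kostka}_r^{\mathbb Z}$. Nontriviality is immediate: since $j\le\lambda_1$ forces $\lambda'_j\ge 1$, every column of $A(\lambda,\mu)$ carries at least one $1$, so each row-sum vector is nonzero on the nonempty $S$ and $S^c$.

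The only identity requiring genuine verification is $\lambda^{\bullet}+\lambda^{\circ}=\lambda$, which I would check via the complementary-count formula $\lambda_i=\#\{j:\lambda'_j\ge i\}$: splitting the count according to whether $j\in S$ or $j\notin S$ yields exactly $\lambda^{\bullet}_i+\lambda^{\circ}_i$ by the construction of the two conjugates. Beyond this identity the argument is bookkeeping, and I anticipate no real obstacle -- the decisive combinatorial content is built into the definition of reducibility of $A(\lambda,\mu)$, which guarantees that both halves of the column split simultaneously have partition row sums, after which the Gale-Ryser theorem (via Proposition~\ref{thm:0-1}) closes the argument and exhibits $(\lambda,\mu)=(\lambda^{\bullet},\mu^{\bullet})+(\lambda^{\circ},\mu^{\circ})$ as in (\ref{eqn:decomposition}).
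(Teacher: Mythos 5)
Your proof is correct and follows essentially the same route as the paper: split the columns of $A(\lambda,\mu)$ along $S$, take $\mu^{\bullet},\mu^{\circ}$ as row sums and $\lambda^{\bullet},\lambda^{\circ}$ as the partitions whose conjugates are the restricted column-sum vectors, and apply Proposition~\ref{thm:0-1} to each submatrix. The extra bookkeeping you supply (nontriviality, membership in ${\sf Par}_r$, and the identity $\lambda^{\bullet}+\lambda^{\circ}=\lambda$ via complementary counting) is exactly what the paper leaves implicit.
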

\begin{proof}
Let $S$ be as in Definition~\ref{def:Alambdamureducible}.  Let $\mu^\bullet$ be the vector obtained by adding all of the columns in $S$, and let $\mu^\circ = \mu - \mu^\bullet$ be the sum of all other columns. By the hypothesis that $A(\lambda,\mu)$ is reducible, $\mu^\bullet,\mu^\circ\in {\sf Par}_r$. Similarly, let $\lambda^\bullet$ and $\lambda^\circ$ be the partitions whose column multisets are $\{\lambda'_i:i\in S\}$, and $\{\lambda'_i:i\not\in S\}$ respectively.

Hence, 
$\mu = \mu^\bullet+\mu^\circ$ and $\lambda = \lambda^\bullet + \lambda^\circ$,
and these are both partitions.  Consider the submatrix of $A(\lambda,\mu)$ consisting of only the columns in $S$.  This is a $\{0,1\}$-matrix such that the sum of the entries in row $i$ is $\mu^\bullet_i$, and the sum of the entries in column $j$ is $(\lambda^\bullet)'_j$.  Therefore, by Proposition \ref{thm:0-1}, $(\lambda^\bullet,\mu^\bullet)\in \sf{Kostka}_r^{\mathbb Z}$.  By the same logic $(\lambda^\circ,\mu^\circ)\in \sf{Kostka}_r^{\mathbb Z}$, so $(\lambda,\mu)$ is reducible.
\end{proof}

\begin{example}
Continuing Example~\ref{exa:Jan8aaa}, $A(\lambda,\mu)$ is reducible by letting $S = \{2,3,4,8\}$.  This corresponds to 
\[\lambda^\bullet = (4,3,3,3,2,1), \mu^\bullet = (3,3,2,2,2,2,2), \lambda^\circ = (4,4,4,4,1,1), \text{\ and $\mu^\circ = (4,4,2,2,2,2,2)$.}\]
\end{example}

\begin{example}
\label{exa:conversenottrue}
The converse of Proposition~\ref{prop:AredHB} is not true.  Let $\lambda = (3,2,1)$, $\mu = (2,2,1,1)$. We have the decomposition 
\[\lambda^\circ = (1,1), \mu^\circ = (1,1), \lambda^\bullet = (2,1,1), \mu^\bullet = (1,1,1,1).\]  
However, one can verify that 
\begin{equation*}
A(\lambda,\mu) = \begin{bmatrix}
1 & 1 & 0\\
1 & 0 & 1\\
1 & 0 & 0\\
0 & 1 & 0\\
\end{bmatrix}
\end{equation*}
is not reducible.
\end{example}

\begin{lemma}
\label{lemma:underhood}
 Each column of $A^*(\lambda,\mu)$ is one of the following:
\begin{enumerate}
\item
all $0$'s, except for a single $1$;
\item
all $0$'s, except for a single $-1$ and a single $1$ below it; or
\item
all $0$'s, except for a single $-1$ and with a single $1$ above and a single $1$ below it.
\end{enumerate}
Moreover, the leftmost column is in the first case. The bottom row can not contain any $-1$'s.
\end{lemma}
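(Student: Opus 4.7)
The plan is a direct column-by-column translation of Lemma~\ref{lemma:eachAcolumn} through the defining relation (\ref{eqn:Jan12aaa}), namely $A^*(\lambda,\mu)_{i,j}=A(\lambda,\mu)_{i,j}-A(\lambda,\mu)_{i+1,j}$ with $A(\lambda,\mu)_{r+1,j}:=0$. Fix a column $j$. Since $A(\lambda,\mu)$ is a $\{0,1\}$-matrix, the nonzero entries of the $j$-th column of $A^*(\lambda,\mu)$ occur precisely at the boundaries of the runs of $1$'s in the $j$-th column of $A(\lambda,\mu)$: a $+1$ appears where a run of $1$'s ends (i.e.\ where a $1$ sits above a $0$ or above row $r+1$), and a $-1$ appears where a run of $1$'s begins below the top row (a $0$ sitting above a $1$).

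With this observation, the three cases of Lemma~\ref{lemma:eachAcolumn} immediately produce the three cases of the lemma. First, a single top-anchored run of $1$'s ending at row $k$ contributes only a single $+1$ at row $k$, which is case (1). Second, a single run of $1$'s occupying rows $a,\dots,b$ with $a>1$ contributes a $-1$ at row $a-1$ and a $+1$ at row $b$, with the $+1$ strictly below the $-1$ since $b\ge a>a-1$; this is case (2). Third, two runs, the first occupying rows $1,\dots,b_1$ and the second occupying rows $a_2,\dots,b_2$ with $b_1<a_2-1$, contribute $+1$ at row $b_1$, $-1$ at row $a_2-1$, and $+1$ at row $b_2$, sandwiching the $-1$ between the two $+1$'s; this is case (3).

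The statement about the leftmost column is inherited directly: by Lemma~\ref{lemma:eachAcolumn} the first column of $A(\lambda,\mu)$ is of type (1), so the first column of $A^*(\lambda,\mu)$ is of type (1) as well. For the bottom-row claim, observe that $A^*(\lambda,\mu)_{r,j}=A(\lambda,\mu)_{r,j}-0\in\{0,1\}$, so no $-1$ can occur in row $r$.

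There is no genuine obstacle here; this is bookkeeping built entirely on Lemma~\ref{lemma:eachAcolumn} and the definition of $A^*$. The only thing to keep in mind is the convention $A(\lambda,\mu)_{r+1,j}=0$, which is what allows a run reaching the bottom row (case (1) or (2) of Lemma~\ref{lemma:eachAcolumn}) to still terminate with a $+1$ in $A^*(\lambda,\mu)$ rather than be lost.
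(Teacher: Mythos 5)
Your proof is correct and follows exactly the paper's route: the paper's own proof is just the terse remark that the lemma follows from Lemma~\ref{lemma:eachAcolumn} together with the definition (\ref{eqn:Jan12aaa}), and your write-up simply carries out that case-by-case translation (including the leftmost-column and bottom-row observations) in full detail.
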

\begin{proof}
This holds by the corresponding statements in Lemma~\ref{lemma:eachAcolumn}, together with (\ref{eqn:Jan12aaa}).
\end{proof}

Let 
\begin{equation}
\label{eqn:shapesequence}
\mu=\mu^{(0)}\supset\mu^{(1)}\supset \cdots \supset \mu^{(i)}\supset \cdots \supset\mu^{(\lambda_1)}=\emptyset
\end{equation}
denote the partitions of row sum vectors
for the $r\times (\lambda_1-i)$ leftmost submatrix of $A(\lambda,\mu)$. In our running example, the partitions are
\begin{multline}\nonumber
(7,7,4,4,4,4,4)\supset (7,6,4,4,4,4,4)\supset (6,5,4,4,4,3,3)\supset (5,4,4,3,3,3,3)\\ \nonumber
\supset (4,3,3,3,3,3,2)\supset (3,3,3,2,2,2,2)\supset (2,2,2,2,2,1,1)
\supset (1,1,1,1,1,1,0)\supset \emptyset\nonumber
\end{multline}

\begin{lemma}\label{colschange}
At step $i$, $\mu^{(i-1)}$ transforms to $\mu^{(i)}$ in precisely one of the following three ways: 
\begin{enumerate}
\item One column of $\mu^{(i-1)}$ is deleted (and the result is left-justified). 
\item The rightmost column of $\mu^{(i-1)}$ is shortened. 
\item One column of $\mu^{(i-1)}$ is shortened, and the column $C$ to its immediate right (equivalently, any column
of the same length as $C$) is deleted, and the result is
left-justified. The deleted column is strictly shorter than the shortened column's new length. 
\end{enumerate}
Moreover, if a column of $\mu^{(i-1)}$ is shortened, the new length of that column is strictly larger than the length of any column of $\mu^{(i)}$ to its right.
\end{lemma}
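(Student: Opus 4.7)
The plan is to analyze, at each step $i$ of Ryser's algorithm with $c := \lambda_1 - i + 1$, how column $c$ of $A(\lambda,\mu)$ governs the passage from $\mu^{(i-1)}$ to $\mu^{(i)}$. Since $\mu^{(i-1)}$ and $\mu^{(i)}$ are by definition the row-sum partitions of the first $c$ and first $c-1$ columns of $A(\lambda,\mu)$, the vector difference $\mu^{(i-1)} - \mu^{(i)}$ is exactly column $c$ of $A(\lambda,\mu)$. So reading off which rows get decremented reduces to the three-type classification in Lemma~\ref{lemma:eachAcolumn}. In type (1) (a run of $1$'s from row $1$ to row $t$), decrementing the top $t$ rows is, as a Young-diagram operation, the deletion of any column of length $t$ from $\mu^{(i-1)}$: the required strict drop $\mu^{(i-1)}_t > \mu^{(i-1)}_{t+1}$ follows from $\mu^{(i)}$ being a partition, and no column is shortened so the moreover clause is vacuous. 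This is case (1).

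For types (2) and (3) the key input is Ryser's selection rule: among rows with a $0$ currently in column $c$, pick those of largest row sum, with southernmost tiebreak. Organizing the eligible rows by value into \emph{value classes}, this forces Ryser to pick all of some initial classes $1,\ldots,k-1$ plus a southernmost suffix of class $k$. For type (2), the empty top run gives $p=0$ and $k=1$; the single block $[s, s+t-1]$ with $s>1$ must be a strict south-suffix of the first class, so rows $1,\ldots,s+t-1$ all share one length $M$, with row $s+t$ strictly shorter. Then the column of $\mu^{(i-1)}$ at position $M$ is the rightmost column, has length $s+t-1$, and is shortened to $s-1$, giving case (2); the moreover clause is again vacuous.

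For type (3), write column $c$ as $[1,u] \cup [s,s+q-1]$ with $s > u+1$. The second run is a strict south-suffix of class $k$, so rows $s,\ldots,s+q-1$ share a common length $M^* := V_k$; rows $u+1,\ldots,s-1$ (also in class $k$, unpicked) have the same length $M^*$; and row $s+q$, if it exists, is strictly shorter than $M^*$. Also $u$ is the last row of the previous class (or equals $p$ when $k=1$), so $\mu^{(i-1)}_u > \mu^{(i-1)}_{u+1} = M^*$. I then factor the operation $\mu^{(i-1)} \to \mu^{(i)}$ as (a) deleting any column of length $u$ from $\mu^{(i-1)}$---which, by the standard observation that $\mu^{(i-1)}_u > \mu^{(i-1)}_{u+1}$ lets one identify ``decrement rows $1,\ldots,u$'' with ``delete one length-$u$ column''---and (b) shortening the column at position $M^*$ from its current length $s+q-1$ down to $s-1$, accounting for the decrement of rows $s,\ldots,s+q-1$. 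The column of $\mu^{(i-1)}$ immediately to the right of position $M^*$ has length $u$, matching the ``immediately right, equivalently any column of the same length'' formulation of case (3). The strict inequality $s-1 \geq u+1 > u$ holds because $s \geq u+2$, and every column of $\mu^{(i-1)}$ at position $> M^*$ has length $\leq u$, so after the delete-and-left-justify every column of $\mu^{(i)}$ strictly to the right of the shortened column has length $\leq u < s-1$, yielding the moreover clause.

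The main difficulty I anticipate is translating Ryser's ``largest sum, southernmost tiebreak'' rule into the value-class picture used above, in particular showing that a partial selection from a single class must be its south-suffix, and that the two-run bound of Lemma~\ref{lemma:eachAcolumn} is compatible with (indeed forces) this picture. Once that is in hand, each of the three cases reduces to straightforward bookkeeping on column lengths of $\mu^{(i-1)}$ and $\mu^{(i)}$.
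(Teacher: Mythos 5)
Your proposal is correct and takes essentially the same route as the paper: both analyses come down to Ryser's largest-sum, southmost-tiebreak selection at step $i$, split into the same trichotomy (no tiebreak; a proper southmost suffix of the top value class; all larger classes plus a proper suffix of one class), which yields cases (1)--(3) respectively. Your only departures are cosmetic --- you organize the trichotomy through the column types of Lemma~\ref{lemma:eachAcolumn} and check the ``moreover'' clause by the direct bound that columns to the right of the shortened one have length at most $u<s-1$, where the paper argues the same point by a brief contradiction with the tie-breaking assumption.
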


\begin{example}
In our running example, when $i=2$, the transition 
\[\mu^{(1)}=(7,6,4,4,4,4,4)\mapsto\mu^{(2)}=(6,5,4,4,4,3,3)\]
is obtained by applying (3): shortening column $4$ and deleting column $5$ (or column $6$).
\[\tableau{\ & \ & \ & \ & X & \ & \ \\ \ & \ & \ & \ & X & \ \\ 
\ & \ & \ & \ \\
\ & \ & \ & \ \\
\ & \ & \ & \ \\
\ & \ & \ & X \\
\ & \ & \ & X } \mapsto 
\tableau{\ & \ & \ & \  &  & \  & \ \\ \ & \ & \ & \ & & \ \\ 
\ & \ & \ & \ \\
\ & \ & \ & \ \\
\ & \ & \ & \ \\
\ & \ & \ & \\
\ & \ & \ }
\mapsto
\tableau{\ & \ & \ & \  & \ & \ \\ \ & \ & \ & \ & \ \\ 
\ & \ & \ & \ \\
\ & \ & \ & \ \\
\ & \ & \ & \ \\
\ & \ & \ & \\
\ & \ & \ }
\]
\end{example}

\begin{proof}
Suppose we are executing the transition $A^{(i-1)}\mapsto A^{(i)}$ for $i\geq 1$.
We are moving (or using ``in place'') some $1$'s from the $r\times (\lambda_1-i+1)$ left submatrix of $A^{(i-1)}$ and having them
terminate in column $\lambda_1-i+1$.
If no tie-breaking occurs, then in fact the rows of the $1$'s used form an initial interval $[1,\ell]$ and there must exist a column of length $\ell$
in $\mu^{(i-1)}$ (specifically, the one that contains the southmost $1$ moved), and we fall into case (1). 

Now suppose tie-breaking occurs. Suppose that $\mu^{(i-1)}$ has largest part $m$, which occurs $t$ many times. If the number of
$1$'s moved is strictly fewer than $t$, we are in case (2). Finally, suppose the number of $1$'s moved is greater than $t$. Since tie-breaking is assumed, there is some part size $m'$, occurring $t'$ many times in $\mu^{(i-1)}$, for which Ryser's rule chooses
some southmost segment of $1$'s to move (as in the example $i=2$ above where $m'=4$ and $t'=5$). This means that Ryser's rule chooses $1$ from every row north of this block of $m'$. This puts us in case (3). 

The final sentence is clearly true if we are in case (2) since we are shortening the (weakly) shortest column. If instead we are
in case (3) and the shortening made the column equal to some other column of $\mu^{(i)}$ it is straightforward to see that would
imply no tie-breaking actually occurred, a contradiction of the analysis above.
\end{proof}

\begin{lemma}
\label{lemma:Jan18aaa}
The (exhaustive) cases \emph{(1)}, \emph{(2)}, \emph{(3)} of Lemma~\ref{colschange} in the transition $\mu^{(i-1)}\mapsto \mu^{(i)}$ respectively
correspond to the following $\pm 1$ values of $A^*(\lambda,\mu)$ in column $\lambda_1-i+1$:
\begin{itemize}
\item[(1)] $A^*(\lambda,\mu)_{\ell,\lambda_1-i+1}=1$ if the deleted column is of length $\ell$;
\item[(2)] $A^*(\lambda,\mu)_{\ell',\lambda_1-i+1}=-1, A^*(\lambda,\mu)_{\ell,\lambda_1-i+1}=1$ if the shortened column was of length $\ell$ and becomes of length $\ell'<\ell$;
\item[(3)] $A^*(\lambda,\mu)_{\ell'',\lambda_1-i+1}=1, A^*(\lambda,\mu)_{\ell',\lambda_1-i+1}=-1, A^*(\lambda,\mu)_{\ell,\lambda_1-i+1}=1$  if the shortened column was of length $\ell$ and becomes of length $\ell'<\ell$, and the deleted column is of length $\ell''(<\ell')$.
\end{itemize}
\end{lemma}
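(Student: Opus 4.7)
The plan is to unwind each case of Lemma~\ref{colschange} into a specific pattern of $1$'s in column $\lambda_1 - i + 1$ of $A(\lambda, \mu)$, then apply the definition $A^*(\lambda,\mu)_{k,j} = A(\lambda,\mu)_{k,j} - A(\lambda,\mu)_{k+1,j}$ to read off the $\pm 1$ positions. The key preliminary step is to show that column $\lambda_1 - i + 1$ of $A(\lambda, \mu)$, read top to bottom, is literally the componentwise difference $\mu^{(i-1)} - \mu^{(i)}$. This requires knowing that the row-sum vector of the leftmost $\lambda_1 - j$ columns of $A(\lambda,\mu)$ is already weakly decreasing, hence equals the partition $\mu^{(j)}$ as an ordered tuple. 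I would verify this by induction on $j$: the base case $j = 0$ is that $A(\lambda,\mu)$'s row sums are $\mu$ by construction, and the inductive step follows from the rule that at step $i$ Ryser's algorithm moves $1$'s from the rows of current largest sum, breaking ties by preferring southerly rows; the ``moreover'' clause of Lemma~\ref{colschange} (that a shortened column is still strictly longer than any column to its right) guarantees that the residual row-sums remain weakly decreasing.

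With that identification in hand, each case is a direct computation. In case (1), deleting a column of length $\ell$ from $\mu^{(i-1)}$ decrements exactly rows $1, \ldots, \ell$ by $1$, so column $\lambda_1 - i + 1$ of $A(\lambda, \mu)$ is a single run of $1$'s in rows $1$ through $\ell$, and the downward differences give a lone $+1$ at row $\ell$. In case (2), shortening the rightmost column from $\ell$ to $\ell'$ decrements rows $\ell'+1, \ldots, \ell$, producing a run of $1$'s there; differences yield $-1$ at row $\ell'$ and $+1$ at row $\ell$. In case (3), the simultaneous shortening (from $\ell$ to $\ell'$) and deletion (of a column of length $\ell'' < \ell'$) decrement rows $1, \ldots, \ell''$ and rows $\ell'+1, \ldots, \ell$ separately, giving two disjoint runs of $1$'s, so the differences give $+1$ at $\ell''$, $-1$ at $\ell'$, and $+1$ at $\ell$.

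The main obstacle I expect is the inductive claim that the row-sum vectors remain sorted, so that the shape-level transitions of Lemma~\ref{colschange} align coordinate-for-coordinate with the rows of $A(\lambda,\mu)$. Once this is confirmed, each of the three cases reduces to identifying the top and bottom endpoints of at most two runs of $1$'s in a single column, consistent with the classification in Lemma~\ref{lemma:underhood}.
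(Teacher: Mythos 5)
Your proposal is correct and takes essentially the paper's route: the paper's proof of this lemma is a one-line appeal to the definition of $A^*(\lambda,\mu)$ and Ryser's algorithm, and your write-up just supplies the implicit details — chiefly that the row sums of each leftmost submatrix are already weakly decreasing (so column $\lambda_1-i+1$ of $A(\lambda,\mu)$ is literally the componentwise difference $\mu^{(i-1)}-\mu^{(i)}$), after which the three cases follow by differencing consecutive rows. One minor remark: the sortedness is preserved by the largest-sum/southerly-tiebreak selection rule alone (if adjacent rows have equal sums and the upper is decremented, so is the lower); the ``moreover'' clause of Lemma~\ref{colschange} is not what guarantees it.
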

\begin{proof}
This follows from the definition of $A^*(\lambda,\mu)$ and by Ryser's algorithm.
\end{proof}

\begin{lemma}\label{alternation}
Every $-1$ in $A^*(\lambda,\mu)$ must have a $1$ to its left in the same row.  Furthermore, all entries between this $-1$ and 
the rightmost such $1$ are $0$'s. 
\end{lemma}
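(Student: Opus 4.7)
The plan is to analyze Ryser's algorithm step-by-step through its row-selection sets and the quantity $\delta_i(t) := \mu^{(t)}_i - \mu^{(t)}_{i+1} \geq 0$. For each step $t \in [1,\lambda_1]$ I will set $R_t := \{k : \mu^{(t-1)}_k - \mu^{(t)}_k = 1\}$, which is exactly the support of column $\lambda_1 - t + 1$ of $A(\lambda,\mu)$. A direct unpacking of the definition of $A^*$ gives
\[
A^*(\lambda,\mu)_{i,\,\lambda_1 - t + 1} \;=\; [i \in R_t] - [i+1 \in R_t] \;=\; \delta_i(t-1) - \delta_i(t),
\]
so the signs in row $i$ of $A^*$ are entirely governed by how $R_t$ meets $\{i, i+1\}$ at each step.

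The key step is to establish a dichotomy enforced by Ryser's selection rule (largest row-sum first, south tiebreak). If $\delta_i(t-1) > 0$, row $i$ has strictly larger sum than row $i+1$ in $\mu^{(t-1)}$, so $i$ is selected before $i+1$; this forces $i+1 \in R_t \Rightarrow i \in R_t$, and hence $A^*_{i,\,\lambda_1 - t + 1} \in \{0, +1\}$. If $\delta_i(t-1) = 0$, rows $i$ and $i+1$ share a sum level, and the south-first rule chooses $i+1$ before $i$; this forces $i \in R_t \Rightarrow i+1 \in R_t$, and hence $A^*_{i,\,\lambda_1 - t + 1} \in \{0, -1\}$. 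Rephrased as dynamics: at each step $\delta_i$ either stays the same, or jumps from $0$ to $1$ (the $A^* = -1$ case), or decreases by $1$ from a positive value (the $A^* = +1$ case).

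The lemma then follows by a short walk argument on this nonnegative integer sequence. If $A^*_{i,j} = -1$ and $t = \lambda_1 - j + 1$, then $\delta_i(t) = 1$. Since $\delta_i(\lambda_1) = 0$, the sequence $\delta_i(t), \delta_i(t+1), \ldots, \delta_i(\lambda_1)$ must return to $0$, which by the allowed dynamics can only occur via a $+1$-step at some $t^* > t$. Taking $t^*$ minimal, $\delta_i$ stays at $1$ throughout $[t, t^*-1]$, forcing $A^*_{i,\,\lambda_1 - t'+1} = 0$ for all $t < t' < t^*$. Setting $j^* := \lambda_1 - t^* + 1 < j$, we conclude that $A^*_{i,j^*} = +1$ is a $+1$ strictly to the left of $j$ in row $i$, with all entries between $j^*$ and $j$ in that row equal to $0$, which is exactly what the lemma asserts.

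The main obstacle will be the dichotomy in the second paragraph; it requires a careful check that Ryser's south-first tiebreak constrains $R_t \cap \{i, i+1\}$ to the two restricted patterns described (in particular, that $R_t \cap \{i,i+1\} = \{i\}$ is impossible when the two row sums agree, and $R_t \cap \{i,i+1\} = \{i+1\}$ is impossible when row $i$'s sum strictly dominates). Once this is established, everything else is a routine monotonicity argument on the nonnegative integer walk $\delta_i(t)$.
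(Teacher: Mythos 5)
Your proposal is correct, and it reaches the lemma by a genuinely different route than the paper. The paper first classifies how the shapes $\mu^{(t)}$ evolve under Ryser's algorithm (Lemma~\ref{colschange}: delete a column, shorten the rightmost column, or shorten-and-delete, together with the key ``final sentence'' that a shortened column stays strictly longer than everything to its right), translates that classification into the $\pm 1$ pattern of each column of $A^*$ (Lemma~\ref{lemma:Jan18aaa}), and then proves the alternation lemma by tracking the specific column $C$ that was shortened: no new $-1$ can appear in that row until $C$ is shortened or deleted, and when that eventually happens a $1$ appears. Your argument instead works locally with the consecutive row-sum differences $\delta_i(t)=\mu^{(t)}_i-\mu^{(t)}_{i+1}$ (equivalently, the number of columns of $\mu^{(t)}$ of length exactly $i$), derives the identity $A^*_{i,\lambda_1-t+1}=\delta_i(t-1)-\delta_i(t)$, and extracts the needed monotonicity directly from the greedy selection rule (largest row sum first, south tiebreak): a $-1$ in row $i$ forces $\delta_i$ to jump from $0$ to $1$, a $+1$ forces a decrease from a positive value, and the walk $\delta_i(t)$ must return to $0$, which produces the $1$ to the left with only $0$'s in between (hence that $1$ is the rightmost one, giving both assertions). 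Your dichotomy is correct as stated -- if row $i$ strictly dominates row $i+1$ in sum then $i+1\in R_t\Rightarrow i\in R_t$, and if the sums tie then the south-first rule gives $i\in R_t\Rightarrow i+1\in R_t$ -- and the one point you assert up front, $\delta_i(t)\ge 0$ (equivalently that the row sums of the left submatrices stay weakly decreasing in the original row order, so that your two cases are exhaustive), is supplied by the same induction on $t$ that your dynamics set up, so it should be folded explicitly into that induction rather than assumed. What each approach buys: the paper's proof reuses Lemmas~\ref{colschange} and~\ref{lemma:Jan18aaa}, which it needs anyway for later arguments (e.g., Lemmas~\ref{claim:1VertEdge} and~\ref{claim:SourceBlock}); yours is self-contained, avoids the column-transition case analysis entirely, and in effect re-proves the content of the final sentence of Lemma~\ref{colschange} in the sharper local form ``a $-1$ in row $i$ can occur only when $\delta_i=0$,'' which is arguably the cleaner statement for this particular lemma.
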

\begin{proof}
Suppose $A^*(\lambda,\mu)_{\ell',\lambda_1-i+1}=-1$. By Lemma~\ref{lemma:Jan18aaa} a column $C$ in $\mu^{(i-1)}$ of length $\ell>\ell'$ was shortened to length $\ell'$ at the step $A^{(i-1)}\mapsto A^{(i)}$ of Ryser's algorithm. The final sentence of Lemma~\ref{colschange} says that at each later stage no column of $\mu^{(i+k)}$ (for $k\geq 0$)
can shorten to length $\ell'$ until column $C$ is further shortened or deleted. Hence, by cases (2) and (3) of 
Lemma~\ref{lemma:Jan18aaa} no $-1$ will
appear left of our $-1$ until column $C$ is shortened/deleted. On the other hand, \emph{every} column is eventually shortened/deleted.
When this occurs, by Lemma~\ref{lemma:Jan18aaa}(1), the desired $1$ will appear. 
\end{proof}

\begin{definition}
$A^*(\lambda,\mu)$ is \emph{$*$-reducible} if there exists a nontrivial subset $S$ 
of the columns whose sum $v^*$ satisfies $0\leq v^*_i \leq \mu^*_i$ for all $i$.
\end{definition}

\begin{proposition}
\label{prop:A*iffA}
$A^*(\lambda,\mu)$ is $*$-reducible if and only if $A(\lambda,\mu)$ is reducible.
\end{proposition}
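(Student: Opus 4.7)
The plan is to observe that both notions of reducibility are expressed in terms of the same subset $S$ of columns, and that the definition of $A^*(\lambda,\mu)$ in (\ref{eqn:Jan12aaa}) makes the column-sums in the two matrices linearly related in a way that converts ``is a partition'' into ``has nonnegative entries.'' In particular, I would use the \emph{same} subset $S$ to witness both directions of the biconditional.

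More precisely, given any subset $S$ of columns, set
\[
v := \sum_{j\in S} A(\lambda,\mu)_{*,j}, \qquad v^* := \sum_{j\in S} A^*(\lambda,\mu)_{*,j},
\]
and adopt the convention $v_{r+1}:=0$. By the linearity of (\ref{eqn:Jan12aaa}), we have $v^*_i = v_i - v_{i+1}$ for every $i=1,\ldots,r$. Taking $S$ to be \emph{all} columns gives the consistency check $\mu^*_i = \mu_i - \mu_{i+1}$ built into the definition of $\mu^*$.

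The first key step is then: $v$ is a partition in ${\sf Par}_r$ if and only if $v^*_i\ge 0$ for all $i=1,\ldots,r$. Indeed, $v$ being a partition means $v_1\ge v_2\ge\cdots\ge v_r\ge 0$, which is equivalent to $v_i - v_{i+1}\ge 0$ for $i=1,\ldots,r-1$ together with $v_r\ge 0$; but this is precisely $v^*_i\ge 0$ for $i=1,\ldots,r$. The second key step is the analogous statement for $\mu - v$: since $(\mu-v)^*_i = \mu^*_i - v^*_i$, the vector $\mu - v$ is a partition in ${\sf Par}_r$ if and only if $v^*_i \le \mu^*_i$ for all $i=1,\ldots,r$. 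Combining these two equivalences, $v$ and $\mu - v$ are simultaneously partitions in ${\sf Par}_r$ if and only if $0 \le v^*_i \le \mu^*_i$ for all $i$. Since ``nontrivial'' has the same meaning (proper nonempty subset of columns) in both definitions, this gives the biconditional.

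I expect no substantial obstacle here; the only point demanding any care is the boundary index $i=r$, where one uses the convention $A(\lambda,\mu)_{r+1,j}:=0$ to make $v^*_r = v_r$ and $\mu^*_r = \mu_r$, so that the bound $v^*_r\le \mu^*_r$ encodes the otherwise-implicit nonnegativity $(\mu-v)_r\ge 0$. Once this convention is stated, the proof is a one-line translation between the weakly-decreasing condition and the nonnegative-differences condition.
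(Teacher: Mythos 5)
Your proposal is correct and is essentially the paper's own argument: both use the same witnessing subset $S$, note that the column-sum $v$ in $A(\lambda,\mu)$ is a partition exactly when $v^*\ge 0$ entrywise, and that the complementary sum $\mu-v$ is a partition exactly when $v^*\le \mu^*$ entrywise. Your explicit treatment of the boundary index $i=r$ is a welcome but minor elaboration of the same reasoning.
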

\begin{proof}
Suppose $A^*(\lambda,\mu)$ is $*$-reducible; let $S$ be the witnessing subset. Let $v$ be the sum of the columns
of $S$ in $A(\lambda,\mu)$. Now 
\[v^*\in {\mathbb Z}_{\geq 0}^r \iff v\in {\sf Par}_r.\] 
Similarly let $v^c$ be the sum of the
columns of $S^c$ in $A(\lambda,\mu)$. We have 
\[\mu^*-v^*\in {\mathbb Z}_{\geq 0}^r \iff v^c\in {\sf Par}_r.\] 
Thus,
$S$ witnesses $A(\lambda,\mu)$ being reducible. The converse proof reverses this argument.
\end{proof}

\begin{proposition}
\label{claim:sources}
$G(\lambda,\mu)$ has exactly $\mu^*_i$ sources in row $i$, and they are all $1$'s in $A^*(\lambda,\mu)$.
\end{proposition}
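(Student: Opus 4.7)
The plan is to break the count into two steps: first establish that no $-1$ in $A^*(\lambda,\mu)$ can be a source of $G(\lambda,\mu)$, so that all sources are among the $+1$ entries; then count the source $+1$'s in row $i$ by analyzing the left-to-right sequence of nonzero entries in that row. The first step is immediate: type (I) arcs terminate at $+1$'s and type (II) arcs terminate at $-1$'s, so a $-1$ is a source exactly when no $+1$ shares its column. By Lemma \ref{lemma:underhood}, every $-1$ in $A^*(\lambda,\mu)$ occurs in a column of type (2) or (3), each of which contains a $+1$. Hence every $-1$ receives a type (II) arc and is a non-source; in particular every source is a $+1$.

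For the second step, examine the left-to-right sequence of nonzero entries of row $i$. By Lemma \ref{alternation}, every $-1$ is preceded in this row by a $+1$ with only zeros strictly in between; this rules out two consecutive $-1$'s in the nonzero subsequence, since the leftward $-1$ would otherwise lie strictly between the rightward $-1$ and its required matching $+1$. The nonzero subsequence therefore factors as $1^{a_1}(-1)\,1^{a_2}(-1)\cdots(-1)\,1^{a_{t+1}}$ with $a_1,\ldots,a_t\ge 1$ and $a_{t+1}\ge 0$. Each of the $t$ many $-1$'s emits a single type (I) arc whose target is the last $+1$ of the immediately preceding block, and these $t$ targets are pairwise distinct (they live in different blocks).

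The source $+1$'s in row $i$ are exactly the $+1$'s that are not such a target. Their count is
\[(a_1-1)+(a_2-1)+\cdots+(a_t-1)+a_{t+1} \;=\; \Bigl(\sum_{j=1}^{t+1} a_j\Bigr)-t,\]
which equals the row sum of row $i$ of $A^*(\lambda,\mu)$. Since $A^*(\lambda,\mu)_{i,j}=A(\lambda,\mu)_{i,j}-A(\lambda,\mu)_{i+1,j}$, this row sum telescopes to $\mu_i-\mu_{i+1}=\mu_i^*$, as required. The argument is essentially bookkeeping once Lemmas \ref{lemma:underhood} and \ref{alternation} are in hand; the only subtle point is confirming that the $-1$'s of a single row send their type (I) arcs to pairwise distinct $+1$'s, but this follows immediately from the non-adjacency of $-1$'s in the nonzero subsequence observed above.
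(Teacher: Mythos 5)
Your proof is correct and follows essentially the same route as the paper's: the paper also observes that every source is a $1$, pairs each $-1$ in row $i$ with the nearest $1$ to its left (guaranteed by Lemma~\ref{alternation}), and identifies the sources with the $\mu_i^*$ unpaired $1$'s given by the row sum. Your write-up merely makes the paper's terse pairing argument explicit (block factorization, injectivity of the pairing, and justifying via Lemma~\ref{lemma:underhood} that no $-1$ is a source), which is a fine elaboration rather than a different approach.
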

\begin{proof}
By Lemma~\ref{alternation} and the definition of $G(\lambda,\mu)$ any source must correspond to 
a $1$. Now since the sum of the entries in row $i$ of $A^*(\lambda,\mu)$ is $\mu_i^*$. Each $-1$ in that row 
can be paired with the $1$ closest to its left (which exists by Lemma~\ref{alternation}). There is a surplus of
$\mu_i^*$-many unpaired $1$'s. These are the $\mu_i^*$-many sources.
\end{proof}

\begin{proposition}\label{partA}
The undirected graph structure of
$G(\lambda,\mu)$ is a forest that is planar (with respect to the specified embedding).
\end{proposition}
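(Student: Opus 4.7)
The plan is to prove the statement by induction on the number of columns of $A^*(\lambda, \mu)$ processed from left to right, maintaining the invariant that the partial graph built so far is a planar forest. The base case (empty graph) is trivial. At each inductive step, I will add the vertices in the next column $c$, together with the type II vertical arcs entirely within column $c$, and, if column $c$ contains a $-1$ (Cases (2) or (3) of Lemma~\ref{lemma:underhood}), the single new type I horizontal arc from that $-1$ at $(\bar\ell, c)$ to the rightmost $+1$ at some $(\bar\ell, k)$ with $k < c$, guaranteed by Lemma~\ref{alternation}.

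The forest property will be preserved without trouble: the new vertices together with the new intra-column vertical arcs form a path (with the $-1$ at its center in Case (3), or a single edge in Case (2)), and this path attaches to the prior graph only through the single new horizontal arc, so no cycle arises. Planarity of the new vertical arcs is automatic, because all existing arcs lie strictly west of column $c$. The new horizontal arc, if any, could in principle cross either prior horizontal arcs in row $\bar\ell$ or prior vertical arcs in columns $c' \in (k, c)$ passing through row $\bar\ell$. Crossings of the first kind are ruled out by Lemma~\ref{alternation}, since prior $-1$'s in row $\bar\ell$ lie in columns strictly less than $k$ and their arcs occupy intervals strictly west of $[k, c]$.

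The main obstacle is crossings of the second kind. The key observation is that, by Lemma~\ref{lemma:Jan18aaa}, the $-1$ at $(\bar\ell, c)$ records the creation of a length-$\bar\ell$ column $D$ at step $s_c = \lambda_1 - c + 1$ of Ryser's algorithm, and by Lemma~\ref{alternation} this column persists unaltered at length $\bar\ell$ through all intermediate steps up to $\lambda_1 - k$. Hence $\mu^{(\lambda_1 - c')}$ contains a column of length $\bar\ell$ for every $c' \in (k, c)$. Now consider any vertical arc in such a column $c'$ with endpoints straddling row $\bar\ell$. If column $c'$ falls under Case (2) of Lemma~\ref{lemma:underhood}, then the rightmost column of $\mu^{(\lambda_1 - c')}$ has length strictly greater than $\bar\ell$, forcing every column of $\mu^{(\lambda_1 - c')}$ to have length exceeding $\bar\ell$; this contradicts the existence of $D$. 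If $c'$ falls under Case (3), then the shortened column's old length $p_3$ and the immediately-right deleted column's length $p_1$ satisfy $p_1 < \bar\ell < p_3$; but by Lemma~\ref{colschange}(3) these columns occupy consecutive positions in the sorted partition $\mu^{(\lambda_1 - c')}$, so no column can have the intermediate length $\bar\ell$---again contradicting $D$.

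The hardest part of the argument is this dual case analysis combining the step-by-step behavior of Ryser's algorithm with the sorted-partition structure of $\mu^{(s)}$. Once it is resolved, the induction closes, establishing both the forest property and planarity of $G(\lambda, \mu)$ simultaneously.
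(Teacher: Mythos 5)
Your proof is correct and, at its core, it is the same argument as the paper's: the crucial step --- ruling out a crossing between a horizontal arc in row $\bar\ell$ and a vertical arc in an intermediate column $c'$ --- proceeds exactly as in the paper, by using Lemmas~\ref{alternation} and~\ref{lemma:Jan18aaa} to force $\mu^{(\lambda_1-c')}$ to contain a column of length $\bar\ell$, which is then incompatible with the Lemma~\ref{colschange} description of the Ryser step at $c'$ in either the case-(2) (rightmost-column shortened) or case-(3) (shortened column with its immediate-right column deleted) configuration. The only real difference is packaging: you induct on columns and obtain the forest property by attaching each new column's short path to the existing graph via at most one horizontal arc, whereas the paper deduces acyclicity directly from the facts that no horizontal arc points right and no vertex has two outgoing arcs; both arguments are valid.
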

\begin{proof}
For the forest claim, we show there is no undirected cycle. Assume otherwise and such a 
cycle ${\mathcal C}$ exists in $G(\lambda,\mu)$. No horizontal arc of $G(\lambda,\mu)$ points to the right. Therefore, it follows
that some vertex $v$ of ${\mathcal C}$ has two distinct arcs pointing away from $v$. But, by construction, no vertex of $G(\lambda,\mu)$
has this property, a contradiction.

Assume for the sake of contradiction that two edges of $G(\lambda,\mu)$ cross over each other.  Locally, the two
possible configurations look like:
\begin{equation*}
A^*(\lambda,\mu) = \begin{bmatrix}
\rn{11'}{\cdots} & \rn{12'}{\cdots} & \rn{13'}{\cdots} & \rn{14'}{\cdots} & \rn{15'}{\cdots} \\
\rn{21'}{\cdots} & \rn{22'}{\cdots} & \rn{23'}{1} & \rn{24'}{\cdots} & \rn{25'}{\cdots}\\
\rn{31'}{\cdots} & \rn{32'}{\cdots} & \rn{33'}{\cdots} & \rn{34'}{\cdots} & \rn{35'}{\cdots} \\
\rn{41'}{\cdots} & \rn{42'}{1} & \rn{43'}{\cdots} & \rn{44'}{-1} & \rn{45'}{\cdots}\\
\rn{51'}{\cdots} & \rn{52'}{\cdots} & \rn{53'}{\cdots} & \rn{54'}{\cdots} & \rn{55'}{\cdots} \\
\rn{61'}{\cdots} & \rn{62'}{\cdots} & \rn{63'}{-1} & \rn{64'}{\cdots} & \rn{65'}{\cdots}\\
\end{bmatrix},
\begin{bmatrix}
\rn{11}{\cdots} & \rn{12}{\cdots} & \rn{13}{\cdots} & \rn{14}{\cdots} & \rn{15}{\cdots} \\
\rn{21}{\cdots} & \rn{22}{\cdots} & \rn{23}{-1} & \rn{24}{\cdots} & \rn{25}{\cdots}\\
\rn{31}{\cdots} & \rn{32}{\cdots} & \rn{33}{\cdots} & \rn{34}{\cdots} & \rn{35}{\cdots} \\
\rn{41}{\cdots} & \rn{42}{1} & \rn{43}{\cdots} & \rn{44}{-1} & \rn{45}{\cdots}\\
\rn{51}{\cdots} & \rn{52}{\cdots} & \rn{53}{\cdots} & \rn{54}{\cdots} & \rn{55}{\cdots} \\
\rn{61}{\cdots} & \rn{62}{\cdots} & \rn{63}{1} & \rn{64}{\cdots} & \rn{65}{\cdots}\\
\end{bmatrix}
\end{equation*}
\begin{tikzpicture}[overlay,remember picture]
\draw [->, color=blue, line width=0.5mm] (63) -- (23);
\draw [->, color=blue, line width=0.5mm] (44) -- (42);
\draw [->, color=blue, line width=0.5mm] (23') -- (63');
\draw [->, color=blue, line width=0.5mm] (44') -- (42');
\put(170,0){$\lambda_1-j+1$}
\put(312,0){$\lambda_1-j+1$}
\put(410,40){$\ell'$}
\put(410,11){$b$}
\put(410,69){$a$}
\end{tikzpicture}

In either case, the horizontal edge is in row $\ell'$ and the vertical edge is in column $\lambda_1-j+1$.  
The horizontal edge implies, by Lemma~\ref{lemma:Jan18aaa} that
$\mu^{(j-1)}$ has a column of length $\ell'$.  However, the vertical edge implies, respectively that 
in the transition from $\mu^{(j-1)}$ to $\mu^{(j)}$,
\begin{enumerate}
\item
a column of length $b'>b$ is shortened to length $b>\ell'$, and a column of length $a<\ell'$ is deleted; or
\item
a column of length $b>\ell'$ is shortened to length $a<\ell'$.
\end{enumerate}
By Lemma~\ref{colschange}, neither of these is possible, a contradiction. Hence $G(\lambda,\mu)$ is planar.
\end{proof}

\begin{lemma}
\label{lemma:lefttoright}
Let $u,v$ be vertices of $G(\lambda,\mu)$ with ${\sf col}(v)<{\sf col}(u)$. Let $u=v_0,v_1,\ldots,v_N=v$ be distinct vertices
of an undirected path $P$ from $u$ to $v$. No step of this path moves right (\emph{i.e.,} ``backwards'') on a horizontal arc $v_{i}\leftarrow v_{i+1}$ where ${\sf col}(v_{i+1})>{\sf col}(v)$.
\end{lemma}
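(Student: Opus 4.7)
The plan is to exploit the fact that every vertex of $G(\lambda,\mu)$ has out-degree at most $1$: every $-1$ has exactly one outgoing arc (the horizontal arc to the rightmost $1$ to its left, which exists by Lemma~\ref{alternation}), and every $1$ has at most one outgoing arc (the vertical arc to the $-1$ in its column, when one exists by Lemma~\ref{lemma:underhood}).

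Combining this with Proposition~\ref{partA} (that $G(\lambda,\mu)$ is a forest), a standard argument shows that each connected component has a unique sink and that the outgoing arcs make the component into a tree ``rooted at that sink,'' with all outgoing arcs pointing toward the root. Indeed, following outgoing arcs from any vertex must terminate (no cycles in the forest), giving at least one sink; conversely, two distinct sinks $s_1, s_2$ in the same component would force some vertex on the undirected path between them to have out-degree $\geq 2$, a contradiction. Consequently, the unique undirected path from $u$ to $v$ splits into two monotone portions: a \emph{forward portion} $u = x_0 \to x_1 \to \cdots \to x_a = m$ along outgoing arcs from $u$ up to a meeting vertex $m$ (the least common ancestor of $u$ and $v$ in this rooted tree), followed by a \emph{backward portion} that reverses the outgoing path $v \to \cdots \to m$.

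In the forward portion, every step moves along an outgoing arc, and outgoing horizontal arcs point leftward, so no forward step moves right on a horizontal arc. For the backward portion, let $v = w_0 \to w_1 \to \cdots \to w_c = m$ denote the outgoing path from $v$ to $m$. Each arc $w_{k-1} \to w_k$ is either horizontal, with ${\sf col}(w_k) < {\sf col}(w_{k-1})$, or vertical, with ${\sf col}(w_k) = {\sf col}(w_{k-1})$. Hence columns weakly decrease along $w_0, w_1, \ldots, w_c$, so
\[
{\sf col}(w_k) \leq {\sf col}(w_0) = {\sf col}(v) \quad \text{for all } k = 0, 1, \ldots, c.
\]
Now suppose some step of the $u$-to-$v$ path is a rightward traversal of a horizontal arc $v_i \leftarrow v_{i+1}$. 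Since forward steps never move right on horizontal arcs, this step must occur in the backward portion, so $v_i = w_k$ and $v_{i+1} = w_{k-1}$ for some $k$, with $w_{k-1} \to w_k$ horizontal. Then ${\sf col}(v_{i+1}) = {\sf col}(w_{k-1}) \leq {\sf col}(v)$, contradicting the hypothesis ${\sf col}(v_{i+1}) > {\sf col}(v)$.

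The only nontrivial conceptual step is establishing the parent-function structure of the components of $G(\lambda,\mu)$; once that decomposition of the $u$-to-$v$ path is in place, the column-monotonicity along outgoing paths makes the conclusion essentially immediate.
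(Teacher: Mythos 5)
Your proof is correct, and it is legitimate to invoke Proposition~\ref{partA}, since the forest property is established before this lemma. The route is genuinely different in packaging from the paper's, though both rest on the same two facts: every vertex of $G(\lambda,\mu)$ has out-degree at most one (a $-1$ has exactly the one horizontal arc guaranteed by Lemma~\ref{alternation}, a $1$ has at most the one vertical arc allowed by Lemma~\ref{lemma:underhood}), and horizontal arcs point strictly left. The paper argues locally on the given path: a rightward (backward) step in a column right of ${\sf col}(v)$ forces a later leftward (forward) horizontal step in order to reach $v$, and the switch from a backward step to a forward step at some intermediate vertex would give that vertex out-degree two --- a contradiction; notably this needs neither the forest property nor any global structure. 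You instead build the global picture: out-degree $\le 1$ plus acyclicity gives a unique sink per component and a parent function, so the (unique) $u$--$v$ path is a forward segment to the meet $m$ followed by the reversal of the forward path from $v$ to $m$, and column monotonicity along that second path pins every rightward horizontal traversal into columns $\le {\sf col}(v)$. What your approach buys is a reusable structural statement (rooted components, LCA decomposition, column monotonicity toward the root) from which the lemma falls out immediately; what the paper's buys is brevity and independence from Proposition~\ref{partA}. In effect your LCA decomposition re-derives, globally, the same ``no forward step after a backward step'' dichotomy that the paper extracts directly from the out-degree bound, so the mathematical core coincides even though the derivations and the final contradictions differ.
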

\begin{proof}
Suppose a path $P$ exists with this backwards step $v_i\leftarrow v_{i+1}$. Then since ${\sf col}(v_{i+1})>{\sf col}(v)$, there must exist
a later step that uses a leftward (\emph{i.e.}, ``forward'') move along a horizontal arc $v_{j+1}\leftarrow v_j$ (for some $j>i$). Since this
orientation change occurs, for some $i< k\leq j$, $P$ uses a backwards step $v_{k-1}\leftarrow v_{k}$ followed by a forward step $v_{k}\rightarrow v_{k+1}$ (possibly on a vertical arc). Since all the vertices $v_t$ are distinct, we conclude
$v_k$ has out-degree at least two. However no vertex in $G(\lambda,\mu)$ has this property, a contradiction.
\end{proof}

\begin{lemma}
\label{lemma:connected}
$G(\lambda,\mu)$ is connected if and only if there is a horizontal arc emanating from every column in $G(\lambda,\mu)$ except the leftmost one.
\end{lemma}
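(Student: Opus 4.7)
The plan is to exploit the fact, established in Proposition~\ref{partA}, that $G(\lambda,\mu)$ is a forest, so that its number of connected components is exactly $|V|-|E|$ by Euler's formula. Given this, the entire problem reduces to a careful bookkeeping of vertices and edges via the trichotomy for columns of $A^*(\lambda,\mu)$ in Lemma~\ref{lemma:underhood}.

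First I would set $c_k$ $(k=1,2,3)$ to be the number of columns in case $(k)$ of Lemma~\ref{lemma:underhood}. Counting nonzero entries column-by-column gives
\[ |V| = c_1 + 2c_2 + 3c_3. \]
For edges: every $-1$ emits exactly one horizontal arc (existence by Lemma~\ref{alternation}, uniqueness by definition of type~(I) arcs), so the number of horizontal arcs is $c_2+c_3$; every $1$ in a column containing a $-1$ emits exactly one vertical arc, so the number of vertical arcs is $c_2 + 2c_3$. Adding gives $|E| = 2c_2+3c_3$, and therefore
\[ |V|-|E| = c_1. \]
By Proposition~\ref{partA}, $G(\lambda,\mu)$ is a forest, so the number of connected components of $G(\lambda,\mu)$ equals $c_1$.

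Now I would observe that by the second sentence of Lemma~\ref{lemma:underhood} the leftmost column always falls in case~(1), so $c_1\ge 1$, with equality if and only if no non-leftmost column lies in case~(1), i.e.\ every non-leftmost column contains a $-1$. Finally, a column contains a $-1$ if and only if it emits a horizontal arc, since horizontal arcs are in bijection with $-1$'s (each $-1$ emits exactly one, into an earlier column of the same row by Lemma~\ref{alternation}). Combining these equivalences, $G(\lambda,\mu)$ is connected $\iff c_1=1 \iff$ every non-leftmost column emits a horizontal arc, which is the desired biconditional.

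I do not anticipate any serious obstacle: once one notices that the forest property turns connectivity into a counting identity, all ingredients are already present in Lemma~\ref{lemma:underhood} and Lemma~\ref{alternation}. The only mild care needed is to make sure the horizontal and vertical arc counts per column type are tallied correctly, and to invoke that the leftmost column is always of type~(1) so that $c_1 \geq 1$ automatically.
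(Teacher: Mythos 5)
Your proof is correct, but it is a genuinely different argument from the one in the paper. You turn connectivity into a counting identity: since Proposition~\ref{partA} makes $G(\lambda,\mu)$ a forest, the number of components is $|V|-|E|$, and your tallies per column type (vertices $c_1+2c_2+3c_3$; one horizontal arc per $-1$, giving $c_2+c_3$; one vertical arc per $1$ sharing a column with a $-1$, giving $c_2+2c_3$) are all accurate and involve no double counting, since every arc is determined by its source and the two arc types cannot coincide. The paper instead argues structurally, without the forest property: for ($\Leftarrow$) it iterates directed horizontal steps leftward to show every vertex has a directed path to the unique vertex in column $1$, and for ($\Rightarrow$) it uses Lemma~\ref{lemma:lefttoright} to show that from a column with no outgoing horizontal arc no undirected path can reach any vertex strictly to its left. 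Your route buys a sharper quantitative statement -- the number of connected components of $G(\lambda,\mu)$ equals the number of columns of type (1) in Lemma~\ref{lemma:underhood}, i.e., columns of $A^*(\lambda,\mu)$ containing no $-1$ -- while the paper's route yields the structural fact (used in the direction ($\Leftarrow$)) that in the connected case every vertex admits a directed path to the column-$1$ vertex, and it is independent of Proposition~\ref{partA}. One small point worth making explicit in your write-up is that the vertices of $G(\lambda,\mu)$ are exactly the nonzero entries of $A^*(\lambda,\mu)$ (isolated $1$'s included), which the paper's conventions do support, and that ``a horizontal arc emanating from a column'' means its source (a $-1$) lies in that column, so such an arc exists for a column precisely when the column contains a $-1$.
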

\begin{proof} ($\Leftarrow$) Suppose there is a horizontal arc beginning in every column $C$ in $G(\lambda,\mu)$ that is not the leftmost one.  This means, for every vertex $v\in C$ there is a directed path from $v$ to some vertex $u$ such that ${\sf col}(u)<{\sf col}(v)$.  By iterating this, we obtain a directed path from $v$ to $s$, the unique vertex in column $1$.  Our choice of $v$ was arbitrary, so there is a directed path from any vertex to $s$, and so $s$ is in the same connected component as all other vertices. Hence $G(\lambda,\mu)$ is connected.

($\Rightarrow$) Suppose there is some (not leftmost) column $C$ such that $C$ has no horizontal edge emanating from it.
We wish to show $G(\lambda,\mu)$ is not connected. Assume otherwise. Let $u\in C$, and say that $v$ is a vertex with ${\sf col}(v)<{\sf col}(u)$.  Since $G(\lambda,\mu)$ is connected, there exists an undirected path $P$ from $u$ to $v$.
  Since column $C$ has no horizontal edges coming out of it, and all horizontal edges point left, the first horizontal step in $P$ is to the right.  Since ${\sf col}(v)<{\sf col}(u)$, Lemma~\ref{lemma:lefttoright} says $P$ does not exist, a contradiction.
    \end{proof}

\begin{lemma}
\label{disconnectedcase}
If $G(\lambda,\mu)$ is not connected, any connected component $G'$ is a conservative subtree.
\end{lemma}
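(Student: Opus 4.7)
The plan is to verify that $G'$ satisfies the definition of conservative subtree via condition (C.1). Since $G'$ is a connected component by assumption, (C.1) is immediate. Moreover, by Proposition~\ref{partA} the ambient graph $G(\lambda,\mu)$ is a forest, so the connected subgraph $G'$ is automatically a tree. The only nontrivial content is thus the vertical-arc closure condition.

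For that condition, I would invoke Lemma~\ref{lemma:underhood}: each column of $A^*(\lambda,\mu)$ contains \emph{at most one} $-1$. Every vertical arc of $G(\lambda,\mu)$ is of type (II), and by construction points from a $1$ to the (unique) $-1$ in the same column. Consequently, all vertical arcs lying in a given column $c$ share the $-1$ of column $c$ as a common endpoint. If $G'$ contains one such arc, then that $-1$ lies in $G'$; but $G'$, being a connected component, is closed under inclusion of any edge incident to one of its vertices. In particular, the other vertical arc of column $c$ (when it exists, namely in the type (3) case of Lemma~\ref{lemma:underhood}) is incident to this $-1$ and so must also lie in $G'$, along with its other endpoint. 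This establishes the vertical-arc closure.

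Finally, I would address nontriviality. Reading ``nontrivial'' as ``nonempty'' (i.e., allowing even an isolated-vertex component), there is nothing to check: the vertical-arc condition is vacuous in the edgeless case, and (C.1) is still the one satisfied. If ``nontrivial'' is intended to mean ``having at least one edge,'' one would still need to observe that any isolated-vertex component satisfies both clauses vacuously, so the conclusion applies to every component of interest.

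I do not expect a serious obstacle here: the entire argument pivots on the structural fact from Lemma~\ref{lemma:underhood} that each column of $A^*(\lambda,\mu)$ has at most one $-1$, which forces all vertical arcs in a column to meet at a common vertex and hence to be simultaneously in or out of any connected component. All other ingredients are standard facts about connected components of undirected graphs.
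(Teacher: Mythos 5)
Your proof is correct and follows essentially the same route as the paper: condition (C.1) is immediate for a connected component, and the vertical-arc closure follows from Lemma~\ref{lemma:underhood} (at most one $-1$ per column, so all type (II) arcs in a column share that $-1$) together with the maximality of a connected component. You merely spell out the details the paper leaves implicit; the aside about nontriviality and the tree structure is harmless and not needed.
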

\begin{proof}
In view of (C.1), it suffices to prove that whenever a vertical arc is in $G'$, all vertical arcs of $G(\lambda,\mu)$, in the same
column, also appear. This follows from Lemma~\ref{lemma:underhood}, the connectedness of $G'$ and $G(\lambda,\mu)$'s definition.
\end{proof}

\begin{proposition}
\label{lemma:hascon}
If $G(\lambda,\mu)$ has a conservative subtree then $A^*(\lambda,\mu)$ is $*$-reducible.
\end{proposition}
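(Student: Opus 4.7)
The plan is to take $S := \{j : \text{column } j \text{ contains a vertex of } G'\}$ and prove this set witnesses $*$-reducibility. Writing $V' := V(G')$, the crucial preliminary step is to show that whenever $j \in S$, every $\pm 1$ entry in column $j$ lies in $V'$. This uses the conservation clause of the definition: if some vertex of column $j$ lies in $V'$ and any vertical arc of column $j$ is in $G'$, then by conservation all such arcs lie in $G'$, so all $\pm 1$ entries of the column lie in $V'$. If instead $(i,j) \in V'$ is involved only in horizontal arcs of $G'$, a case check rules this out: if $(i,j)$ is a $-1$, then it becomes a source of $G'$ (its incoming arcs in $G(\lambda,\mu)$ are all vertical), contradicting both (C.1) (no $-1$ is a source of $G(\lambda,\mu)$) and (C.2) (the only non-$G$-source of $G'$ is a $+1$); if $(i,j)$ is a $+1$ in a type (2) or (3) column, it becomes a sink of $G'$, contradicting (C.1) (the sinks of $G(\lambda,\mu)$ are only $+1$'s in type (1) columns) and (C.2) (the unique sink of $G'$ is a $-1$).

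With this claim in hand, $v^*_i := \sum_{j \in S} A^*(\lambda,\mu)_{i,j} = |P_i \cap V'| - |N_i \cap V'|$, where $P_i$ (resp.\ $N_i$) denotes the set of $+1$'s (resp.\ $-1$'s) of $A^*(\lambda,\mu)$ in row $i$. Letting $H_i$ be the number of horizontal arcs of $G'$ in row $i$, each pairing exactly one $-1$ with one $+1$ (both in $V'$), we obtain $|P_i \cap V'| - H_i = \#\{+1\text{-sources of }G'\text{ in row }i\}$ and $|N_i \cap V'| - H_i = \#\{-1\text{-sinks of }G'\text{ in row }i\}$; hence $v^*_i$ equals the former count minus the latter.

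To bound $v^*_i$ in $[0, \mu^*_i]$, I apply the two cases. Under (C.1), the sources and sinks of $G'$ equal those of $G(\lambda,\mu)$ lying in $V'$; the only sinks of $G(\lambda,\mu)$ are $+1$'s in type (1) columns, so there are no $-1$-sinks of $G'$, and $v^*_i$ equals the number of $G(\lambda,\mu)$-sources of row $i$ lying in $V'$, which is in $[0, \mu^*_i]$ by Proposition \ref{claim:sources}. Under (C.2), let $i_0$ be the row of the unique $-1$-sink: for $i \neq i_0$, no sink contributes and all $+1$-sources in row $i$ are sources of $G(\lambda,\mu)$ (since the special source lies in row $i_0$), giving the same bound; for $i = i_0$, exactly one $-1$-sink contributes and the sources of $G'$ in that row consist of the special source together with a subset of $G(\lambda,\mu)$-sources in row $i_0$, so $v^*_{i_0} \in [0, \mu^*_{i_0}]$ by another application of Proposition \ref{claim:sources}. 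For nontriviality, $S \neq \emptyset$ since $V' \neq \emptyset$, and $S$ is a proper subset of the columns: in (C.2) the leftmost column is type (1) (Lemma \ref{lemma:underhood}) with its only $+1$ a sink of $G(\lambda,\mu)$, which cannot lie in $V'$ without contradicting the uniqueness of the $-1$-sink; in (C.1) the relevant case is $G(\lambda,\mu)$ disconnected (via Lemma \ref{disconnectedcase}), so $G'$ is a proper connected component.

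The main obstacle is the first claim — forcing every $\pm 1$ in a column of $S$ into $V'$ — since that is where the conservation clause of the definition combines with the source/sink constraints from (C.1) and (C.2) to yield a rigid structural consequence. The rest of the argument is combinatorial bookkeeping of sources and sinks using the already-established structure of $G(\lambda,\mu)$.
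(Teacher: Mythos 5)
Your proof is correct and takes essentially the same route as the paper's: choose $S$ to be the columns meeting $G'$, cancel $\pm 1$'s along the horizontal arcs of $G'$, bound the surviving sources row-by-row via Proposition~\ref{claim:sources} (with the unique $-1$ sink cancelling against the special source in case (C.2)), and establish properness of $S$ exactly as the paper does (the column-$1$ sink in case (C.2), properness of the component in case (C.1)). Your explicit preliminary claim that every $\pm 1$ entry in a column of $S$ already lies in $G'$ is left implicit in the paper's proof, so making it precise is a welcome clarification rather than a departure.
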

\begin{proof} Let $G'$ be a conservative subtree of $G(\lambda,\mu)$, and let $S$ be the set of columns containing vertices of $G'$.  Suppose $u$ is a vertex that is not a sink or source. Since $G'$ is by definition
connected, by construction of $G(\lambda,\mu)$, $u$ is incident to one horizontal arc. Let $u'$ be the other vertex of this arc.
In any directed path of $G'$, the vertices alternate from $-1$ to $1$. Therefore, the entries of 
$A^*(\lambda,\mu)$ corresponding to $u$ and $u'$ are of opposite sign. Hence, if $v^*$ is the sum of the columns of $S$, the entries of
$A^*(\lambda,\mu)$ associated to $u$ will cancel. 

It remains to  check the contribution of sources and sinks to $v^*$. There are two cases.

{\sf Case 1:} ($G'$ is of type (C.1)) Every vertex of $G'$ that is not a source of $G(\lambda,\mu)$ is
incident to exactly one horizontal arc, so when adding up the columns of $S$, the only $\pm 1$ values that do not cancel out are the sources of $G'$.  However, these are all sources of $G(\lambda,\mu)$, so by Proposition~\ref{claim:sources} they contribute at most $\mu^*_i$ $1$'s to row $i$.  Hence 
$v^*_i\leq \mu^*_i$. 
This says $A^*(\lambda,\mu)$ is $*$-reducible provided $S\subsetneq [\lambda_1]:=\{1,2,\ldots,\lambda_1\}$. Indeed this is the case since if $S=[\lambda_1]$
then $G'$ contains a vertex in each column. Then since all vertices in a column are connected, we would see that
$G'=G(\lambda,\mu)$ which is a contradiction.

{\sf Case 2:} ($G'$ is of type (C.2))
The only vertices in $G'$ that are not incident to a horizontal arc in $G'$ are the sink and the sources.  
All other vertices of $G'$ are incident to a unique horizontal arc.
By the definition of (C.2), one of the sources is a $1$ that is in the same row as the unique sink, which is a $-1$ in $A^*(\lambda,\mu)$.  As a result, the only values that do not cancel out when adding the columns in $S$ are the other sources of $G'$.  Since these vertices are also sources of $G$, by appeal
to Proposition~\ref{claim:sources} (as in {\sf Case 1}), we see  $A^*(\lambda,\mu)$ is $*$-reducible, provided $S\subsetneq [\lambda_1]$.
To see this proper containment holds, notice that the first column of $A^*(\lambda,\mu)$ contains a single $1$ and hence must correspond to a sink vertex $q$. Since $G'$ has a unique sink (which is a $-1$), $q$ is not in $G'$ and thus $1\not\in S$.
\end{proof}

In fact the converse is true; see Theorem~\ref{thm:someifftrue}.

\begin{example}
Our conservative subtree uses $S=\{2,3,4,8\}$. The reader can check that with this $S$, $A^*(\lambda,\mu)$ is $*$-reducible,
with 
\[v=(0,1,0,0,0,0,2)^T \leq \mu^*=(0,3,0,0,0,0,4)^T.\] 
\end{example}

\noindent\emph{Conclusion of the proof of Theorem~\ref{thm:bigone}:} The first claim about the planar forest structure of $G(\lambda,\mu)$ is Proposition~\ref{partA}. The second claim follows from Proposition~\ref{lemma:hascon}, Proposition~\ref{prop:A*iffA}, and
Proposition~\ref{prop:AredHB}.  \qed

\section{Proof of the Width Bound (Theorem~\ref{thm:bound})} \label{sec:2.2} Our main application of Theorem~\ref{thm:bigone} is to prove
Theorem~\ref{thm:bound}. First let us state some facts we will need:

\begin{lemma}
\label{claim:1VertEdge}
If a column of $G(\lambda,\mu)$ of (matrix index) $c$ contains exactly one vertical arc, and in addition the arc's northmost endpoint is in row $n$, 
there is no horizontal arc $y\leftarrow x$ (necessarily leftward) such that ${\sf row}(x)={\sf row}(y)<n$ with
${\sf col}(x)> c$ and ${\sf col}(y)\leq c$.
\end{lemma}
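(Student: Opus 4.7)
The plan is to translate the hypothesis on column $c$ into information about Ryser's algorithm and then obtain a contradiction by tracking a specific column through the intermediate steps.

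By Lemma~\ref{lemma:underhood}, column $c$ having exactly one vertical arc means it is in case (2) of that lemma: column $c$ of $A^*(\lambda,\mu)$ vanishes except for a single $-1$ at row $n$ and a single $1$ at some row $m>n$. Lemma~\ref{lemma:Jan18aaa} then identifies step $i:=\lambda_1-c+1$ with case (2) of Lemma~\ref{colschange}, so the rightmost column of $\mu^{(i-1)}$ has length $m$ and is shortened to length $n$. Because $\mu^{(i-1)}$ is a partition (its columns weakly decrease in length from left to right), every column of $\mu^{(i-1)}$ has length at least $m$.

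Next, suppose for contradiction that a horizontal arc $y\leftarrow x$ exists satisfying the stated properties, and set $r:={\sf row}(x)={\sf row}(y)<n$, $b:={\sf col}(x)>c$, $a:={\sf col}(y)\leq c$. The case $a=c$ is immediately impossible, since it would require $A^*(\lambda,\mu)_{r,c}=1$, whereas column $c$'s only $1$ is at row $m\neq r$. So $a<c$. Setting $i^x:=\lambda_1-b+1<i$ and $j^y:=\lambda_1-a+1>i$, Lemma~\ref{lemma:Jan18aaa} applied to the $-1$ at $(r,b)$ tells us that at step $i^x$ some column $C_0$ was shortened from a length greater than $r$ down to length exactly $r$.

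The crux is to show $C_0$ persists as a length-$r$ column in $\mu^{(i-1)}$. Lemma~\ref{alternation} guarantees that all entries in row $r$ of $A^*(\lambda,\mu)$ strictly between columns $a$ and $b$ are zero. The key bookkeeping observation, read off from Lemma~\ref{lemma:Jan18aaa}, is that any step whose operation shortens a length-$r$ column, deletes a length-$r$ column, or shortens a longer column down to length $r$ must place a nonzero entry in row $r$ at that step's column. Thus no such event occurs in the interval $\{i^x+1,\ldots,j^y-1\}$, which contains $\{i^x+1,\ldots,i-1\}$. Hence $C_0$ is untouched through step $i-1$ and remains a column of length $r$ in $\mu^{(i-1)}$, contradicting the earlier conclusion that every column of $\mu^{(i-1)}$ has length at least $m>n>r$. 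The main obstacle is simply assembling this inventory of length-$r$-relevant events; once in hand, the contradiction follows directly.
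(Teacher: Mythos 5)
Your proof is correct and follows essentially the same route as the paper's: identify the single vertical arc with case (2) of Lemma~\ref{colschange} (so the rightmost, hence shortest, column of $\mu^{(i-1)}$ forces all columns to be long), then use the horizontal arc together with the zero entries of row $r$ guaranteed by Lemma~\ref{alternation} to exhibit a persisting column of length $r<n$ in $\mu^{(i-1)}$, a contradiction. The only difference is that you spell out the persistence bookkeeping that the paper leaves implicit.
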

\begin{proof}
Since there is only a single vertical arc, this means that at step $\lambda_1-c+1$ of Ryser's
algorithm, a column $C$ in $\mu^{(\lambda_1-c)}$ is shortened to length $n$ (in $\mu^{(\lambda_1-c+1)}$), and no other column was deleted. Therefore, we must fall into Lemma~\ref{colschange}(2). Thus
$C$ is the rightmost and shortest column in $\mu^{(\lambda_1-c)}$. $C$ remains the shortest in
$\mu^{(\lambda_1-c+1)}$. However, existence of the arc $y\leftarrow x$
implies that at step $\lambda_1-c+1$ of the algorithm, there is a shorter column in $\mu^{(\lambda_1-c)}$  of length ${\sf row}(x)={\sf row}(y)<n$. This contradicts the previous sentence.
\end{proof}

\begin{lemma}
\label{claim:SourceBlock}
Suppose $G(\lambda,\mu)$ has a source $s$. There is no vertical arc  of the form:
\begin{itemize} 
\item[(a)] $\underset{x}{\overset{y}{\uparrow}}$ with ${\sf row}(x)> {\sf row}(s)$ and ${\sf row}(y)\leq {\sf row}(s)$; or
\item[(b)] $\underset{y}{\overset{x}{\downarrow}}$ with ${\sf row}(x)<{\sf row}(s)$ and ${\sf row}(y)\geq {\sf row}(s)$,
\end{itemize}
such that ${\sf col}(x)={\sf col}(y)>{\sf col}(s)$.
\end{lemma}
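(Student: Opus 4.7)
The plan is to treat cases (a) and (b) by a single supply-and-demand count for columns of length $r_s := {\sf row}(s)$ through Ryser's algorithm. By Lemma~\ref{lemma:Jan18aaa}, the vertical arc at column $c := {\sf col}(x) = {\sf col}(y)$ records that at step $j := \lambda_1 - c + 1$ some column $C$ of $\mu^{(j-1)}$ shortens from length $\ell$ to length $\ell' := {\sf row}(y)$; in case~(a) one has $\ell = {\sf row}(x)$, while in case~(b) the arc is the upper piece of the Lemma~\ref{colschange}(3) pattern with $\ell > \ell' > \ell'' = {\sf row}(x)$ (and a column of length $\ell''$ additionally deleted). In either situation, the last sentence of Lemma~\ref{colschange} forces every column right of $C$ in $\mu^{(j)}$ to have length $<\ell'$, and every column left of $C$ to retain length $\geq \ell$. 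Hence the number of columns of length $r_s$ in $\mu^{(j)}$ is exactly $1$ when $\ell'=r_s$ (with $C$ the witness) and exactly $0$ when $\ell'$ and $r_s$ are strictly separated.

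Let $k := \lambda_1 - {\sf col}(s) + 1 > j$. Because $s$ is a $+1$ in $A^*(\lambda,\mu)$ at row $r_s$, Lemma~\ref{lemma:Jan18aaa} forces $\mu^{(k-1)}$ to contain at least one column of length $r_s$. I will track the count of columns of length $r_s$ over the intermediate steps $j+1,\ldots,k-1$: by Lemma~\ref{lemma:Jan18aaa} each such step increments the count precisely when $A^*(\lambda,\mu)$ has a $-1$ at row $r_s$ in the corresponding column (shortening-to length $r_s$), and decrements it precisely when there is a $+1$ (a column of length $r_s$ is deleted or shortened from). Writing $n_-, n_+$ for the number of $-1$s and $+1$s of row $r_s$ in the strictly interior columns $({\sf col}(s), c)$, the supply inequality ``initial count in $\mu^{(j)}$ plus $n_- - n_+ \geq 1$'' reduces to $n_- \geq n_+$ in the sub-case $\ell' = r_s$ and to $n_- \geq n_+ + 1$ in the sub-case $\ell' \neq r_s$.

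Both of these inequalities are defeated by combining the source hypothesis with Lemma~\ref{alternation}. Since $s$ has no incoming horizontal arc, the first $\pm 1$ strictly right of $s$ in row $r_s$ must be a $+1$, and by Lemma~\ref{alternation} no two $-1$s are consecutive in the $\pm 1$-sequence of the row. When $\ell' \neq r_s$, column $c$ contributes nothing at row $r_s$, so the subsequence in $({\sf col}(s), c)$ starts with $+$ and has no two consecutive $-$s, forcing $n_- \leq n_+$ and contradicting $n_- \geq n_+ + 1$. When $\ell' = r_s$, column $c$ contributes a trailing $-1$ in row $r_s$; Lemma~\ref{alternation} then forces the $\pm 1$ entry just left of that $-1$ to be $+$, so the subsequence in $({\sf col}(s), c)$ begins and ends with $+$ and has no two consecutive $-$s, giving $n_- < n_+$ and contradicting $n_- \geq n_+$. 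Either way we obtain a contradiction.

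The main obstacle, I expect, is the bookkeeping in the second paragraph: one must verify that each intermediate step's net effect on the count of ``columns of length $r_s$'' is read off exactly as the $\pm 1$ entry (if any) of $A^*(\lambda,\mu)$ at row $r_s$ in that step's column, with no crosstalk between the three cases of Lemma~\ref{colschange}. Once that correspondence is set up, the finite sign-counting in the third paragraph is a short and self-contained exercise.
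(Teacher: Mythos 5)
Your argument is correct in outline and takes a genuinely different route from the paper's. The paper works locally at the single step $t_1=\lambda_1-{\sf col}(x)+1$: it first uses the source hypothesis (via the arc rule and Lemma~\ref{alternation}) to produce a column of length ${\sf row}(s)$ in $\mu^{(t_1-1)}$ that is untouched by that step, and then an arc of type (a) or (b) contradicts Lemma~\ref{colschange} directly at that one step (the last sentence of that lemma for type (a), and the ``deleted column is the one immediately to the right'' requirement of case (3) for type (b)). You instead run a global conservation argument: you track the number of columns of length $r_s={\sf row}(s)$ across all intermediate steps, note that the net change per step is read off exactly from the row-$r_s$ entry of $A^*(\lambda,\mu)$ in the corresponding column, and then defeat the resulting supply inequality by the sign pattern in row $r_s$ forced by sourcehood and Lemma~\ref{alternation}. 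The bookkeeping you worry about in your last paragraph is in fact fine: by Lemma~\ref{lemma:Jan18aaa}, in each of the three cases of Lemma~\ref{colschange} the count of length-$r_s$ columns changes by exactly the negative of the row-$r_s$ entry of that step's column. Your route replaces the paper's local analysis by a single counting identity plus elementary sign-counting; the paper's route is shorter because it never has to control the intermediate steps.

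Two spots need a little more care, both patchable with material already in the paper. First, in case (b) with ${\sf row}(y)>{\sf row}(s)$ (so $\ell'>r_s$), your claim that $\mu^{(j)}$ has \emph{no} column of length $r_s$ does not follow from the last sentence of Lemma~\ref{colschange} alone, which only bounds columns right of $C$ by $<\ell'$ and so does not exclude length exactly $r_s$; you need the description in Lemma~\ref{colschange}(3) that the deleted column has the length of the column immediately to the right of the shortened one, whence every column right of $C$ already has length at most $\ell''={\sf row}(x)<r_s$. Second, in the subcase $\ell'=r_s$ you assert the sign subsequence in $({\sf col}(s),c)$ ``ends with $+$''; to justify that (and, crucially, its nonemptiness, without which $n_-\ge n_+$ is not contradicted) you must also observe that the $+1$ nearest to the left of the trailing $-1$ at column $c$ cannot be $s$ itself (else that $-1$ would send a horizontal arc into $s$, contradicting sourcehood) nor lie left of ${\sf col}(s)$ (impossible, since $s$ is a nonzero entry in between). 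With these two remarks inserted, your proof is complete.
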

\begin{proof}
By Proposition~\ref{claim:sources}, since $s$ is a source, it corresponds to a $1$.  
We claim that at step $t_1=\lambda_1-{\sf col}(x)+1$ of Ryser's algorithm, there is a
column $C$ of length ${\sf row}(s)$ that is unaltered by the step. Indeed, what we know is that at step $t_2=\lambda_1-{\sf col}(s)+1$
there is a column of length ${\sf row}(s)$. If no such column exists at step $t_1$, then between step $t_1$ and step $t_2$ some longer
column must have been shortened, at step $t'$, to length ${\sf row}(s)$ which is then shortened or deleted at step $t_2$. 
This implies $A^*(\lambda,\mu)_{{\sf row}(s),\lambda_1-t'+1}=-1$ and 
$A^*(\lambda,\mu)_{{\sf row}(s),\lambda_1-t+1}=0$ for $t'<t<t_2$. However, we know
$A^*(\lambda,\mu)_{{\sf row}(s),\lambda_1-t_2+1}=1$ which implies $s$ is not a source, a contradiction.

Suppose a vertical arc of type (a) occurs. Then examining the possibilities of Lemma~\ref{lemma:Jan18aaa}, we conclude
that some column of length ${\sf row}(x)>{\sf row}(s)$ in $\mu^{(t_1-1)}$ was shortened and is now (in $\mu^{(t_1)}$) 
shorter/equal in length than $C$. This contradicts the last sentence of Lemma~\ref{colschange}. Hence
an arc of type (a) cannot occur.

Now suppose a vertical arc of type (b) occurs. By Lemma~\ref{lemma:Jan18aaa} there must be another vertical arc $\underset{x'}{\overset{y}{\uparrow}}$ in ${\sf col}(x)={\sf col}(y)$. Thus at step $t_1$ we must be in case (3) of Lemma~\ref{colschange}.
However, in $\mu^{(t_1)}$, the shortened column $S$ has length ${\sf row}(y)>{\sf row}(s)$ and the column $D$ deleted
in $\mu^{(t_1-1)}$ is of length
${\sf row}(x)<{\sf row}(s)$. Therefore $D$ cannot be of length equal to the one to the immediate right of $S$ (since $C$ is in-between), as demanded by Lemma~\ref{colschange}(3), a contradiction. Hence no arc of type (b) can occur either.
\end{proof}

First suppose that $\lambda_1>r$. 
If $G(\lambda,\mu)$ is not connected, let $G'$ be any connected component. Then $G'$ is a 
conservative subtree by Lemma~\ref{disconnectedcase} and we are done by Theorem~\ref{thm:bigone}.

Therefore we may assume $G(\lambda,\mu)$ is connected. Therefore, Lemma~\ref{lemma:connected} says every column other than the leftmost one has a leftward horizontal arc coming out of it.  That is, there at least $\lambda_1-1$ horizontal arcs.
No horizontal arc appears in the bottom row (since $A^*(\lambda,\mu)$ has no $-1$'s in that row). Hence the arcs appear in
at most $r-1$ rows. Since we assume $\lambda_1-1>r-1$, by pigeonhole, some row contains two horizontal arcs $u\to u'$ and $v\to v'$.
We may suppose $v\to v'$ is to the left of $u\to u'$.  
 
 \begin{lemma}
\label{lemma:samecol}
If there exist vertices $u',v\in G(\lambda,\mu)$ such that $v$ is a $-1$, $u'$ is a $1$, $\sf{row}(v)=\sf{row}(u')$ and ${\sf col}(v)< {\sf col}(u')$, then $G(\lambda,\mu)$ has a conservative subtree.
\end{lemma}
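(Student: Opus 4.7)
The plan is to produce a conservative subtree of $G(\lambda,\mu)$, splitting into cases by whether $G(\lambda,\mu)$ is connected. If $G(\lambda,\mu)$ is disconnected then any connected component is a conservative subtree of type~(C.1) by Lemma~\ref{disconnectedcase}, and we are done. Assume henceforth that $G(\lambda,\mu)$ is connected. By Lemma~\ref{lemma:eachAcolumn}, column~$1$ of $A(\lambda,\mu)$ is a solid initial block of $1$'s, so column~$1$ of $A^*(\lambda,\mu)$ consists of a single $1$ and no $-1$; this $1$ is the unique sink of $G(\lambda,\mu)$, and every forward directed path terminates there. Since ${\sf col}(u')\ge 2$, column ${\sf col}(u')$ must contain a $-1$, and $u'$ has an out-arc.

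The key sub-claim is that \emph{$u'$ is a directed ancestor of $v$}. Any $1$ in column ${\sf col}(v)$ has its out-arc vertically to the unique $-1$ of that column, namely $v$ (Lemma~\ref{lemma:underhood}), so once $u'$'s forward path enters column~${\sf col}(v)$ it must pass through $v$. The only way to fail is a ``skip'': a horizontal arc $u_t\to u_{t+1}$ in some row $r$ with ${\sf col}(u_t)>{\sf col}(v)>{\sf col}(u_{t+1})$, forcing $A^*(\lambda,\mu)_{r,{\sf col}(v)}=0$. If $r={\sf row}(v)=:i$, then $(i,{\sf col}(u_t))$ is a second $-1$ in row $i$ whose rightmost $1$ to the left is at ${\sf col}(u_{t+1})$, and Lemma~\ref{alternation} forces row~$i$ to be zero strictly between, contradicting $A^*_{i,{\sf col}(v)}=-1$. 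If $r\ne i$ and column ${\sf col}(v)$ has a single vertical arc (case~(2) of Lemma~\ref{lemma:underhood}), its north endpoint is $v$ at row~$i$, and Lemma~\ref{claim:1VertEdge} forbids such an arc when $r<i$; the remaining sub-cases ($r>i$, or column ${\sf col}(v)$ of case~(3)) are dispatched by comparing, via Lemmas~\ref{colschange} and~\ref{lemma:Jan18aaa}, the Ryser step that produces the $-1$ at $(r,{\sf col}(u_t))$ with the later step producing $v$. This final case analysis is the main obstacle.

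Granted the sub-claim, define
\[T:=\bigl(\{\text{ancestors of }v\text{ in }G(\lambda,\mu)\}\setminus\{\text{strict ancestors of }u'\}\bigr)\cup\{u'\},\]
with its induced arcs. Every vertex of $T$ has a directed path within $T$ to $v$, so $T$ is a connected sub-forest of $G(\lambda,\mu)$, hence a tree. The unique sink of $T$ is $v$, which is a $-1$. The vertex $u'$ is a source of $T$ in row~${\sf row}(v)$, because all of its in-arcs in $G(\lambda,\mu)$ come from strict ancestors of $u'$, which are excluded from $T$. For any other $w\in T$, every in-arc of $w$ in $G(\lambda,\mu)$ has its tail also in $T$ (either the tail equals $u'$, in which case $w$ is the first vertex along $u'$'s forward path; or the tail is an ancestor of $v$ whose forward path avoids $u'$, hence in $T$), so sources of $T$ other than $u'$ are precisely the sources of $G(\lambda,\mu)$ lying in $T$. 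Finally, the vertical-arc condition holds: if a vertical arc in column $c$ is in $T$, its head is a $-1$ in $T$, and by Lemma~\ref{lemma:underhood} at most two $1$'s in column $c$ send arcs to that $-1$; both are either $u'$ itself or ancestors of $v$ whose forward paths avoid $u'$, hence both lie in $T$. Therefore $T$ is a conservative subtree of type~(C.2).
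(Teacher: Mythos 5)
Your overall architecture matches the paper's: reduce to the sub-claim that there is a directed path from $u'$ to $v$ (this is exactly Claim~\ref{claim:Jan20fff}), and then build the type-(C.2) subtree from $u'$ together with the ancestors of $v$ avoiding $u'$ (your $T$ is the paper's $G'$, and your verification of (C.2) is essentially the paper's, up to the minor omission of checking that $T$ is a \emph{proper} subgraph, which follows since $v$'s out-neighbour is not an ancestor of $v$). The genuine gap is that you do not prove the sub-claim, which is the heart of the lemma. You dispose of a ``skip'' in row ${\sf row}(v)$ via Lemma~\ref{alternation}, and of a skip strictly above the $-1$ when column ${\sf col}(v)$ carries a single vertical arc via Lemma~\ref{claim:1VertEdge}; but for the remaining cases (skip below ${\sf row}(v)$, or column ${\sf col}(v)$ of case (3) of Lemma~\ref{lemma:underhood}) you only assert they are ``dispatched by comparing the Ryser steps'' of the two columns, and you concede this is ``the main obstacle.'' That obstacle is precisely what the paper's proof is built to overcome, and it does so with a global argument: assuming no directed path exists, it takes the undirected path $P^+$, the region $R$ below the directed initial segment $P$, the northmost-then-rightmost vertex $s$ of the ancestor set of $v$ inside $R$, shows $s$ is a source with ${\sf row}(s)<{\sf row}(v)$, and then derives a contradiction from Lemma~\ref{claim:SourceBlock} together with planarity (Theorem~\ref{thm:bigone}).

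Moreover, the purely local comparison you propose cannot succeed as described, because the configuration you need to exclude is not locally impossible. In the paper's own running example, the horizontal arc in row $1$ from column $8$ to column $4$ skips over columns $5$, $6$ and $7$, each of which contains a $-1$ (these are case-(3) columns, and the skip row lies above their upper $1$'s) --- exactly one of the sub-cases you leave open. Of course the lemma's hypothesis fails for those $-1$'s (they have no $1$ to their right in their row), which is the point: any correct argument for the sub-claim must use the existence of $u'$ in ${\sf row}(v)$ to the right of $v$ (equivalently, that the skipping arc lies on the forward path from $u'$), not just the Ryser-step data of the columns containing the skipping $-1$ and the $-1$ at $v$. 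This is what the paper's use of Lemma~\ref{claim:SourceBlock} accomplishes, so the missing case analysis is not a routine bookkeeping step but the substantive content of the proof.
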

\begin{proof}
If $G(\lambda,\mu)$ is not connected, then we are done by Lemma~\ref{disconnectedcase}.  Therefore, we can assume that $G(\lambda,\mu)$ is connected.  

\begin{claim}
\label{claim:Jan20fff}
There is a directed path from $u'$ to $v$ in $G(\lambda,\mu)$.
\end{claim}
\noindent
\emph{Proof of Claim~\ref{claim:Jan20fff}:} Suppose not. Since $G(\lambda,\mu)$ is connected, 
there is an \emph{undirected} path (with distinct vertices) $P^+$ from $u'$ to $v$. We assert that $P^+$ looks like the one in Figure~\ref{Jan20ggg} (or its vertical reflection, for which our argument is the same, only flipped, except that we
apply the definition of $G(\lambda,\mu)$ whenever we refer to Lemma~\ref{claim:1VertEdge} in that case).  More precisely, let $P$ be the initial part of $P^+$ from $u'$ to $1$ (in Figure~\ref{Jan20ggg}, this is represented by the dashed blue lines).  We claim
$P$ is a \emph{directed path} from $u'$ to $1$. These assertions about $P^+$ and $P$ are straightforward by inspecting the
definition of $G(\lambda,\mu)$ and Lemmas~\ref{lemma:underhood}, \ref{alternation} and~\ref{lemma:lefttoright}.
Let $R$ be the region (strictly) bounded from above by $P$, (weakly) bounded from the left and right by the columns of $v$ and $u'$, respectively, and unbounded from below.

\begin{figure}[t]
\begin{tikzpicture}[scale=0.75]
\fill [color=lightgray] (-1,-3) -- (-1,3) -- (0,3) -- (0,4.5) -- (2,4.5) -- (2,-2.5) -- (4,-2.5) -- (4,-3) -- (0,-3);
\draw [color=white, fill=white] (2,4.5) circle (1.5ex);
\draw [color=white, fill=white] (-1,-1) circle (1.5ex);
\node (v) at (-1,-1) {$v$};
\node (n) at (-3,1.5) {$-1$};
\node (o) at (-3,3) {$1$};
\node (o2) at (-3,0) {$1$};
\node (P) at (2,4.5) {$P$};
\node (up) at (4,-1) {$u'$};
\node (s) at (1,2.5) {$s$}; 
\node at (0.5,-1) {$R$};
\draw [->, color=black, line width=0.5mm, dashed] (v) --(-2,-1) -- (-2,0) -- (o2);
\draw [->, color=black, line width=0.5mm] (o2) -- (n);
\draw [->, color=blue, line width=0.5mm] (o) -- (n);
\draw [->, color=blue, line width=0.5mm,dashed] (up) -- (4,-2.5) -- (2,-2.5) -- (P) -- (0,4.5) -- (0,3) --(o);
\draw [->, color=blue, line width=0.5mm] (s) -- (1,1.5);
\draw [->, color=red, line width=0.5mm,dashed] (1,1.5) -- (-1,1.5) -- (v);
\end{tikzpicture}
\caption{Proof of Claim~\ref{claim:Jan20fff} \label{Jan20ggg}. The column containing the depicted $\pm 1$'s is strictly left
of ${\sf col}(v)$.}
\end{figure} 

Consider the induced subgraph ${\overline G}$ of $G(\lambda,\mu)$ consisting of all vertices that have a directed path to $v$.  Let $s$ be the unique vertex of ${\overline G}$ in $R$ that is northmost then rightmost among all choices (such an $s$ exists since $v$ itself
is in $R$).

Observe that $s$ corresponds to a $1$ in $A^*(\lambda,\mu)$: otherwise if it corresponds to a $-1$, it must have a vertical arc $e$
coming into it (Lemma~\ref{lemma:underhood}). If $e$ comes from above, and is entirely inside $R$, then we 
contradict the maximality of $s$. If the other endpoint $x$ is on $P$ then there is a directed path 
$u'\dashrightarrow x\dashrightarrow s \dashrightarrow v$,  
which contradicts the assumption of this argument. Finally if neither is true, we contradict 
Theorem~\ref{thm:bigone} (planarity of
$G(\lambda,\mu)$). If $e$ only comes into $s$ from below, then by Lemma~\ref{lemma:underhood} this is the unique vertical arc
in its column, and we contradict Lemma~\ref{claim:1VertEdge}.

By a similar argument to the previous paragraph, $s$ is a source, since otherwise there is a horizontal arc coming into $s$ from
the right, and we would either violate the maximality of $s$, planarity of $G(\lambda,\mu)$, or the initial assumption that there is no path from $u'$ to $v$.

We assert ${\sf row}(s)<{\sf row}(v)$: Since $v$ corresponds to a $-1$, by 
Proposition~\ref{claim:sources},
it is not a source, and hence has a vertical arc $e$ entering it (it cannot be a horizontal arc because of the definition of $G(\lambda,\mu)$). By Lemma~\ref{claim:1VertEdge} and the presence of $P$, in fact $v$ must have two vertical arcs entering $v$. Thus one of
these points down, and the assertion follows.

Finally, since $P$ starts at 
\[{\sf row}(u')={\sf row}(v)>{\sf row}(s)\]
 and rises strictly above ${\sf row}(s)$ (since $s$ is in $R$),
 there is a vertical edge of $P$ which together with the source $s$ violates Lemma~\ref{claim:SourceBlock}, completing the proof
 of the Claim.\qed

 Let $G'$ be the subgraph of $G(\lambda,\mu)$ induced by $u'$ and all vertices that have a directed path to $v$ that does not go through $u'$. We claim that this
 $G'$ falls into (C.2) of the definition of conservative subtree. $G'$ is a nontrivial subgraph of $G(\lambda,\mu)$ since $v$ must point to a vertex that is not included in $G'$, because we assume $v$ is a $-1$ and by Lemma~\ref{alternation}. 
  By Claim~\ref{claim:Jan20fff}, $G'$ is connected (as an undirected graph). Notice that if any vertical arc appears in $G'$, it follows
 from Lemma~\ref{lemma:underhood} that all vertical arcs of $G(\lambda,\mu)$ in that column do. By construction $v$ is the unique sink of $G'$  and $u'$ provides the source in the same row, as demanded by (C.2). Finally, if $s'$ is a source of $G'$, it must be a source
 of $G(\lambda,\mu)$ since otherwise any vertex $x$ such that $x\to s'$ is an arc would also be in $G'$.  Thus $G(\lambda,\mu)$ has a conservative subtree, and we are done with the proof of the Lemma.
\end{proof}

By applying Lemma~\ref{lemma:samecol} to $u'$ and $v$, we have that $G(\lambda,\mu)$ has a conservative subtree, and so we are done by Theorem~\ref{thm:bigone}. We are thus finished the proof in the case $\lambda_1>r$.

Now suppose $\lambda_1=r$ and $(\lambda,\mu)\in {\sf Kostka}_r^{\mathbb Z}$ is in the Hilbert basis. We claim that 
$\mu$ is a rectangle. Suppose not. We may assume $G(\lambda,\mu)$ is connected by Lemma~\ref{disconnectedcase} and Theorem~\ref{thm:bigone}. If there exist two horizontal arcs in the same row, we are done by Lemma~\ref{lemma:samecol}.
Otherwise, there is \emph{exactly} one horizontal arc in each row (except the last one):
to see this use Lemma~\ref{lemma:connected} and the assumption that there are $\lambda_1-1=r-1$ columns. But since $\mu$ is not a rectangle, by Proposition~\ref{claim:sources}, one of these rows also contains a source of $G(\lambda,\mu)$. Let $u'$ be the source
and let $v'\leftarrow v$ be the horizontal arc in the same row. Hence $v$ is a $-1$ so there must be a vertical arc in ${\sf col}(v)$
that ends at $v$. Thus, if ${\sf col}(v)>{\sf col}(u')$ then we contradict Lemma~\ref{claim:SourceBlock}. Therefore ${\sf col}(v)<{\sf col}(u')$. Now apply Lemma~\ref{lemma:samecol} to obtain a conservative subtree, which implies $(\lambda,\mu)$ is not a Hilbert basis
element (Theorem~\ref{thm:bigone}), a contradiction.

Thus, the next lemma finishes the $\lambda_1=r$ case.

\begin{lemma}
Suppose $(\lambda,\mu)\in {\sf Kostka}_r^{\mathbb Z}$, $\lambda_1 = r$, and $\mu$ is a rectangle. If $\lambda$ is not a rectangle, then $(\lambda,\mu)$ is not a Hilbert basis element. 
\end{lemma}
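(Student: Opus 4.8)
The plan is to show $(\lambda,\mu)$ is \emph{reducible} by exhibiting an explicit nontrivial decomposition in ${\sf Kostka}_r^{\mathbb Z}$. Write $\mu=(c^k)$. Since $(\lambda,\mu)\in{\sf Kostka}_r^{\mathbb Z}$ we have $|\lambda|=|\mu|=ck$ and $\lambda\geq_{\sf Dom}\mu$; the latter gives $\sum_{i\le k}\lambda_i\ge ck=|\lambda|$, hence $\ell(\lambda)\le k$. Also $k=\ell(\mu)\le r=\lambda_1$. If $c=1$ then $\lambda_1=r\ge k=|\lambda|\ge\lambda_1$ forces $\lambda=(k)$, a rectangle, contrary to hypothesis, so we may assume $c\ge 2$.

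First I would observe that it is enough to write $\lambda=\lambda^\bullet+\lambda^\circ$ with $\lambda^\bullet,\lambda^\circ$ partitions and $|\lambda^\bullet|=ak$ for some $1\le a\le c-1$. Indeed, put $\mu^\bullet=(a^k)$ and $\mu^\circ=((c-a)^k)$; then $\mu=\mu^\bullet+\mu^\circ$, $|\lambda^\bullet|=|\mu^\bullet|$, $|\lambda^\circ|=|\mu^\circ|$, all four partitions have at most $k\le r$ parts, and since a rectangle $(a^k)$ is the dominance-minimum among partitions of $ak$ with at most $k$ parts (equivalently, every partition of $ak$ with at most $k$ parts dominates $(a^k)$), we get $\lambda^\bullet\geq_{\sf Dom}\mu^\bullet$ and $\lambda^\circ\geq_{\sf Dom}\mu^\circ$, so both Kostka coefficients are positive by (\ref{eqn:itistextbook}). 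As $ak\ge k\ge 1$ and $(c-a)k\ge k\ge 1$, both summands are nontrivial, and $(\lambda,\mu)$ is reducible.

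Next I would reduce the existence of such a splitting to a coin-type problem. Set $d_i=\lambda_i-\lambda_{i+1}\ge 0$ (with $\lambda_j=0$ for $j>\ell(\lambda)$); then $\sum_i d_i=\lambda_1=r$ and $\sum_i i\,d_i=|\lambda|=ck$. For any integers $0\le e_i\le d_i$, the vector $\lambda^\bullet_i:=\sum_{j\ge i}e_j$ is a partition, $\lambda-\lambda^\bullet$ is also a partition (its consecutive differences are the $d_i-e_i\ge 0$), and $|\lambda^\bullet|=\sum_i i\,e_i$. So it suffices to choose $0\le e_i\le d_i$ with $\sum_i i\,e_i$ a positive multiple of $k$ that is $<ck$. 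Let $M$ be the multiset with $d_i$ copies of $i$ for each $i$: then $|M|=r$, every entry of $M$ is at most $\ell(\lambda)\le k$, the sum of $M$ equals $ck$, and $M$ is \emph{not} constant because $\lambda$ is not a rectangle. I must find a proper nonempty sub-multiset of $M$ whose sum is divisible by $k$.

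The hard part is the following purely combinatorial claim, which I would prove by a partial-sum argument: \emph{if $b_1,\dots,b_R$ are positive integers each at most $k$, $R\ge k$, $k\mid\sum_i b_i$, and the $b_i$ are not all equal, then some proper nonempty $I\subsetneq\{1,\dots,R\}$ has $k\mid\sum_{i\in I}b_i$.} For $R>k$: the $R+1$ partial sums $s_0=0<s_1<\cdots<s_R$ satisfy $s_0\equiv s_R\equiv 0\pmod k$, and if $s_1,\dots,s_{R-1}$ were all distinct and nonzero mod $k$ we would get $R-1\le k-1$; hence two partial sums coincide mod $k$ with indices $(i,j)\neq(0,R)$, and $I=\{i+1,\dots,j\}$ works. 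For $R=k$ a single window may fail to be proper, so I would reorder the $b_i$ so that $b_1\neq b_2$ and compare the orderings $(b_1,b_2,b_3,\dots)$ and $(b_2,b_1,b_3,\dots)$: if neither produces a proper nonempty window divisible by $k$, then in each ordering $s_0,\dots,s_{k-1}$ is a complete residue system mod $k$; since these two residue systems differ only in containing $b_1$ versus $b_2$, we must have $b_1\equiv b_2\pmod k$, hence $b_1=b_2$ (both lie in $\{1,\dots,k\}$), a contradiction. Applying the claim to $M$ (legitimate since $R=r\ge k$, using the hypothesis $\lambda_1=r$) yields the needed $e_i$, hence the splitting, and finishes the proof. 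The one subtlety worth flagging is exactly this $R=k$ case: naive pigeonhole only delivers window-subsets, and one genuinely needs both that $M$ is nonconstant and that $\ell(\lambda)\le k\le\lambda_1$.
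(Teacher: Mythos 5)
Your proof is correct and follows essentially the same route as the paper: both arguments peel off a proper nonempty collection of columns of $\lambda$ whose total size is a multiple of $\ell(\mu)$ (found by pigeonhole on partial sums of the column lengths modulo $\ell(\mu)$, using two unequal column lengths, available since $\lambda$ is not a rectangle, to rule out the degenerate congruent pair), together with the corresponding sub-rectangle of $\mu$, the dominance checks being the same rectangle-minimality observation. The only cosmetic difference is how the tight pigeonhole case is handled: the paper splits off the case $a_1=s$ and uses the sequence $b_1,b_2,b_1+b_2,\dots$ with $b_1\neq b_2$, whereas you compare the two orderings obtained by swapping $b_1$ and $b_2$ when $R=k$.
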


\begin{proof}
Let $s$ be the length of the columns of $\mu$. Write the lengths of the columns of $\lambda$ as 
\[a_1\ge a_2 \ge \hdots \geq a_r.\] 
Since $\mu'\geq_{\sf Dom}\lambda'$, 
$a_1\le s$. 
Moreover, if $a_1 = s$ then $(\lambda,\mu)$ has a decomposition with $(\lambda^\bullet,\mu^\bullet) = ((1)^s,(1)^s)$. So let us assume that $a_1<s$. 

There exists some $j$ such that $a_1\ne a_j$. Reorder the column lengths of $\lambda$ as 
\[b_1 = a_1, b_2 = a_j, b_3 = a_2, \hdots, b_{j} = a_{j-1} ,b_{j+1} = a_{j+1}, \hdots, b_r = a_{r}.\] 
Consider the sequence of $r+1$ integers 
\begin{equation}\label{eqn:seq}
b_1,b_2,b_1+b_2, b_1+b_2 +b_3, \hdots, \sum_{k=1}^{r-1} b_k, \sum_{k=1}^r b_k = \mu_1\cdot s. 
\end{equation}
By pigeonhole, since $r+1> s$ there must be two of these integers with the same remainder modulo $s$. Moreover, 
\[b_1\not\equiv b_2 \text{\ mod $s$}\] 
since $0<b_2<b_1<s$. So the difference of our two congruent integers from (\ref{eqn:seq}) is of the form $\sum_{k\in I} b_k$. Then $(\lambda,\mu)$ has a decomposition where $\lambda^\bullet$ is given columns of lengths $b_k, k\in I$, and $\mu^\bullet$ has  $\frac{1}{s}\sum_{k\in I} b_k\in {\mathbb Z}$ columns of length $s$ each. Since $a_1=b_1\leq s$ it follows that
\[(\mu^\bullet)' \ge_{\sf Dom} (\lambda^\bullet)' \text{\  and $(\mu^\circ)' \ge_{\sf Dom} (\lambda^\circ)'$,}\] 
where $(\lambda^\circ,\mu^\circ) = (\lambda,\mu) - (\lambda^\bullet, \mu^\bullet)$. 
\end{proof}

The proof of Theorem~\ref{thm:bound} is now complete.\qed

Using the results of this section and the last we derive an additional result:

\begin{theorem}\label{thm:someifftrue}
Ryser's canonical matrix $A(\lambda,\mu)$ is reducible if and only if $G(\lambda,\mu)$ has a conservative subtree.
\end{theorem}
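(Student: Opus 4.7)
The ``if'' direction follows immediately by chaining Proposition~\ref{lemma:hascon} (a conservative subtree yields $*$-reducibility of $A^*(\lambda,\mu)$) with Proposition~\ref{prop:A*iffA} ($*$-reducibility of $A^*$ is equivalent to reducibility of $A$).

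For the ``only if'' direction, suppose $A(\lambda,\mu)$ is reducible. If $G(\lambda,\mu)$ is disconnected, Lemma~\ref{disconnectedcase} already provides a conservative subtree of type (C.1); so assume $G(\lambda,\mu)$ is connected. My plan is to apply Lemma~\ref{lemma:samecol} by exhibiting entries $v=-1$ and $u'=+1$ of $A^*(\lambda,\mu)$ lying in a common row with $\mathrm{col}(v)<\mathrm{col}(u')$, which then directly yields the desired conservative subtree. I argue by contradiction: assume no such pair exists; I will then derive that $A(\lambda,\mu)$ must be irreducible, contradicting the standing hypothesis.

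Under the ``no pair'' hypothesis combined with Lemma~\ref{alternation}, each row of $A^*(\lambda,\mu)$ contains at most one $-1$, which (when present) is the rightmost nonzero entry of that row. Connectedness of $G(\lambda,\mu)$ together with Lemma~\ref{lemma:connected} forces every column $j\ge 2$ to contain a $-1$, so there are exactly $\lambda_1-1$ many $-1$s, lying in distinct rows $i(2),\ldots,i(\lambda_1)$, each with a partner column $f(j)<j$ holding the nearest $+1$ to the left in row $i(j)$ (Lemma~\ref{alternation}). By Proposition~\ref{prop:A*iffA} it suffices to show no proper nonempty $S\subseteq[\lambda_1]$ satisfies $0\le v^*_i\le\mu^*_i$ for all $i$. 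Under our structural hypotheses these row conditions simplify, for each $j\ge 2$, to ``$j\in S$ iff $S$ contains some $+1$-column of row $i(j)$''. In the base subcase where every row $i(j)$ carries only one $+1$ (its partner $f(j)$), this becomes $j\in S\iff f(j)\in S$: regarding each $\{j,f(j)\}$ as an edge of a graph on $[\lambda_1]$, all edges have strictly decreasing endpoints and hence no cycles are possible; with $\lambda_1-1$ edges on $\lambda_1$ vertices the graph is a spanning tree, forcing $S=\emptyset$ or $S=[\lambda_1]$, a contradiction.

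The main obstacle is the remaining subcase, in which some row $i(j)$ carries $\ge 2$ $+1$s alongside its lone rightmost $-1$. The plan is to show this configuration is incompatible with $A(\lambda,\mu)$ being the canonical matrix output by Ryser's algorithm: using Lemma~\ref{colschange} to track the evolution of the row-sum partitions $\mu^{(i-1)}\to\mu^{(i)}$ together with the tiebreaking rule (largest row-sum first, then southernmost), one argues that any canonical row carrying both a $-1$ and multiple $+1$s must place at least one $+1$ strictly to the right of the $-1$, immediately producing the forbidden pair. Making this precise by a careful case analysis of how the ``southernmost'' selection redistributes $\pm 1$s across rows in $A^*$ is where the proof will demand the most care.
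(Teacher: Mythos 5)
The ``if'' direction is correct and is exactly the paper's argument (Proposition~\ref{lemma:hascon} plus Proposition~\ref{prop:A*iffA}). The ``only if'' direction, however, has a genuine gap: your whole argument funnels into the claim that a row of the canonical $A^*(\lambda,\mu)$ cannot have its unique, rightmost-nonzero $-1$ together with two or more $+1$s, and you explicitly leave this as a ``plan'' requiring ``careful case analysis'' rather than proving it. Without it, the case where some row $i(j)$ carries $\ge 2$ $+1$s is simply untreated; note also that in that case your stated simplification ``$j\in S$ iff $S$ contains some $+1$-column of row $i(j)$'' is not the correct reduction of $0\le v^*_{i(j)}\le \mu^*_{i(j)}$ (the two inequalities give only ``$j\in S\Rightarrow$ some $+1$-column lies in $S$'' and ``all $+1$-columns lie in $S\Rightarrow j\in S$''), so the spanning-tree argument does not extend. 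The missing claim is in fact true: by the final sentence of Lemma~\ref{colschange} (with Lemma~\ref{lemma:Jan18aaa}), immediately after a column is shortened to length $i$ it is the \emph{unique} column of length $i$ in the current partition, and length-$i$ columns are created only by $-1$'s in row $i$ of $A^*$; since every destruction of a length-$i$ column is a $+1$ in row $i$, a $-1$ that is rightmost nonzero in its row can be followed (to its left, i.e.\ later in the algorithm) by at most one $+1$. But as written, your proof does not establish this, and it is the crux of your approach.

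You should also know that the paper's proof of this direction avoids your contradiction detour entirely and needs no new structural facts about Ryser's algorithm: starting from $*$-reducibility (Proposition~\ref{prop:A*iffA}) with witness $S$, replace $S$ by its complement if necessary so that $1\notin S$, and set $c=\min S$. If $G(\lambda,\mu)$ is disconnected, Lemma~\ref{disconnectedcase} finishes; otherwise Lemma~\ref{lemma:connected} gives a $-1$, say $w$, in column $c$, and since $v^*_{{\sf row}(w)}\ge 0$ while column $c$ contributes $-1$ to that row, some column of $S$ carries a $+1$ in row ${\sf row}(w)$, necessarily in a column strictly greater than $c$; Lemma~\ref{lemma:samecol} then yields the conservative subtree directly. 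So even after filling your gap, the direct use of the witnessing set $S$ is substantially shorter than arguing ``no forbidden pair $\Rightarrow$ irreducible.''
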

\begin{proof}
First, let us establish the converse to Proposition~\ref{lemma:hascon}. That is we claim that if $A^*(\lambda,\mu)$ is $*$-reducible then
$G(\lambda,\mu)$ has a conservative subtree. If $A^*(\lambda,\mu)$ is $*$-reducible, 
there is some subset $\emptyset\subset S\subset [\lambda_1]$ of the columns of $A^*(\lambda,\mu)$ whose sum $v^*$ satisfies $0\leq v^*_i\leq \mu^*_i$ for all $i$.  We may assume that $1\not\in S$, as otherwise we can consider $[\lambda_1]\setminus S$.

If $G(\lambda,\mu)$ is not connected, then we are done, as any connected component of $G(\lambda,\mu)$ is a conservative subtree, by 
Lemma~\ref{disconnectedcase}.  Therefore, we can assume that $G(\lambda,\mu)$ is connected.  Lemma~\ref{lemma:connected} says that there must be a horizontal edge starting in every column besides column $1$, and since every horizontal edge starts at a $-1$, every column except column $1$ must have a $-1$ in it.  Let $c=\min(S)$.  Since $c>1$, there is some vertex in column $c$, denoted $w$, that is a $-1$.  Since $v^*_{{\sf row}(w)}\geq 0$, there must be some vertex $u'$ that is a $1$ such that ${\sf row}(w) = {\sf row}(u')$ and ${\sf col}(u')\in S$, so in particular ${\sf col}(u') > {\sf col}(w)$.  As a result, Lemma~\ref{lemma:samecol}  implies that there is a conservative subtree, completing the argument.

The result follows by combining the above argument with Proposition~\ref{lemma:hascon} and
Proposition~\ref{prop:A*iffA}.
\end{proof}

Theorem~\ref{thm:someifftrue} combined with Example~\ref{exa:conversenottrue} shows that $(\lambda,\mu)$ being reducible does not imply $G(\lambda,\mu)$
has a conservative subtree. 

\section{Classification of the rays of the Kostka cone}\label{sec:4}

We break up the proof into two propositions. The first verifies that our candidates are indeed extremal rays. The second shows that all extremal rays are of this form. 

\begin{proposition}
\label{prop:firstextremal}
Let $a,b,\ell$ satisfy $a\ge b>0$, $\ell\ge 0$, and $\ell+a\le r$. Then 
$$
(\lambda,\mu) = \left(\underbrace{a,\hdots,a}_{b+\ell},0,\hdots,0; \underbrace{a,\hdots,a}_{\ell},\underbrace{b,\hdots,b}_{a},0\hdots,0\right)
$$
gives an extremal ray of ${\sf Kostka}_r$.
\end{proposition}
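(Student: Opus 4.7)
The plan is to show directly that $\mathbb{R}_{\geq 0}\cdot(\lambda,\mu)$ is a face of ${\sf Kostka}_r$. After a routine verification that $(\lambda,\mu)\in {\sf Kostka}_r$, suppose
$$(\lambda,\mu) = (\lambda^{(1)},\mu^{(1)}) + (\lambda^{(2)},\mu^{(2)})$$
with both summands in the cone; the goal is to show that each summand is a nonnegative real multiple of $(\lambda,\mu)$.

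First I would determine the structure of each summand using only the weakly-decreasing and nonnegativity constraints. Since $\lambda_i = 0$ for $i > b+\ell$, both $\lambda^{(k)}_i$ vanish there. On the range $i \leq b+\ell$, applying the weakly-decreasing condition simultaneously to $\lambda^{(1)}$ and to $\lambda^{(2)} = \lambda - \lambda^{(1)}$ (whose target $\lambda_i = a$ is flat) forces $\lambda^{(1)}$ to be constant on this range, say $\lambda^{(1)}_i = c$ for some $c\in [0,a]$; hence $\lambda^{(1)} = (c/a)\lambda$. The same argument applied to each of the two plateaus of $\mu$ yields $\mu^{(1)} = (c_1^\ell, c_2^a, 0^{r-\ell-a})$ for some $c_1\in [0,a]$, $c_2\in [0,b]$ with $c_1\geq c_2\geq c_1 - (a-b)$.

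Next I would tie $c_1$ and $c_2$ to $c$ via dominance saturations that pass automatically to each summand: if $\sum_{i=1}^t\lambda_i = \sum_{i=1}^t\mu_i$ for $(\lambda,\mu)$ and both summands satisfy the corresponding inequality, then summing gives the equality, forcing each individual inequality to be an equality as well. Saturation at $t=\ell$ (both sides equal $a\ell$) yields $c\ell = c_1\ell$, so $c_1 = c$. The total-sum equality (or equivalently, when $\ell+a<r$, the saturated dominance at $t = \ell+a$, since $\sum_{i=1}^{\ell+a}\lambda_i = (b+\ell)a = a(\ell+b) = \sum_{i=1}^{\ell+a}\mu_i$) yields $(b+\ell)c = \ell c + a c_2$, so $c_2 = bc/a$. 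Consequently $\mu^{(1)} = (c/a)\mu$ and $(\lambda^{(1)},\mu^{(1)}) = (c/a)(\lambda,\mu)$, and analogously for the other summand, completing the proof.

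The main obstacle is primarily bookkeeping rather than genuine difficulty: I must ensure the argument is uniform across the degenerate cases $\ell = 0$ (where the ``$c_1$''-plateau disappears and the $t=\ell$ saturation is vacuous), $\ell+a = r$ (where the $t=\ell+a$ dominance inequality collapses into the total-sum equality), and $a=b$ (where $\lambda = \mu$ is a single rectangle and $c_1 = c_2 = c$). All cases converge to the same conclusion $c_2 = bc/a$, so the ray assertion holds uniformly.
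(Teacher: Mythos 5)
Your proposal is correct and follows essentially the same route as the paper: both arguments first note that each summand must be constant on the plateaus of $\lambda$ and $\mu$ (and vanish beyond them), then force the $\mu$-values via the saturated dominance inequality at $t=\ell$ (summing the two summands' inequalities to an equality) together with the total-sum condition, concluding the summands are parallel to $(\lambda,\mu)$. Your handling of the degenerate cases $\ell=0$ and $a=b$ matches the paper's parenthetical treatment, so there is nothing substantive to add.
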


\begin{proof}
The only way to write $(\lambda,\mu)$ as a sum of two real partitions is 
$$
\sum_{i=1}^2 \left(\underbrace{a_i,\hdots,a_i}_{b+\ell},0,\hdots,0; \underbrace{c_i,\hdots,c_i}_{\ell},\underbrace{b_i,\hdots,b_i}_{a},0\hdots,0\right)
$$
where $a = a_1+a_2 = c_1+c_2$ and $b = b_1+b_2$ and $c_1\ge b_1,c_2\ge b_2$. Suppose that both summands belong to the cone. Then \[\ell\cdot a_1 \ge \ell\cdot c_1\] 
by (\ref{eqn:dominance}) for $t=\ell$; likewise 
\[\ell\cdot a_2\ge \ell\cdot c_2.\] 
These two inequalities add to make $\ell a = \ell a$, so they both hold with equality. Thus $a_1 = c_1$ and $a_2 = c_2$ (or $\ell=0$, in which case the $c_i$ don't exist). Now we also know
\begin{align*}
a_1(\ell+b) &= \ell\cdot c_1+a\cdot b_1,\\
a_2(\ell+b) &= \ell\cdot c_2+a\cdot b_2,
\end{align*}
which rearrange to $a_1/b_1 = a/b$ and $a_2/b_2 = a/b$ (independent of $\ell$). Thus our two summands are parallel. 
\end{proof}

\begin{lemma}\label{lem:easycase}
Suppose $(\lambda,\mu)$ lies on an extremal ray of ${\sf Kostka}_r$ and that all inequalities (\ref{eqn:dominance}) hold strictly for $1\leq t<r$. Then $(\lambda,\mu)$ is parallel to $(a^b,b^a)$ for suitable integers $r\ge a\ge b>0$.
\end{lemma}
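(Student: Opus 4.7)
The plan is to exploit the fact that an extremal ray of ${\sf Kostka}_r$ is a $1$-dimensional face, so the active defining constraints at $(\lambda,\mu)$ must span a codimension-$(2r-1)$ subspace of $\mathbb{R}^{2r}$. The equality $\sum_{i=1}^r \lambda_i = \sum_{i=1}^r \mu_i$ is always active and, by the strictness hypothesis, no dominance inequality for $t < r$ is active. Hence the remaining $2r-2$ linearly independent active constraints must all come from the $2r$ boundary conditions $\lambda_i \geq \lambda_{i+1}$, $\mu_i \geq \mu_{i+1}$ (for $1 \leq i \leq r-1$), $\lambda_r \geq 0$, and $\mu_r \geq 0$.

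Next I would observe that a nonzero partition $\nu = (\nu_1 \geq \cdots \geq \nu_r \geq 0)$ with $k$ distinct positive parts has exactly $r-k$ of its $r$ boundary conditions active. Since $(\lambda,\mu) \neq 0$ and the total-sum equality forces $\lambda = 0 \iff \mu = 0$, both $\lambda$ and $\mu$ are nonzero; hence each contributes at most $r-1$ active boundary constraints, and the total of $2r-2$ forces each to contribute exactly $r-1$. Therefore both $\lambda$ and $\mu$ have a single distinct positive part:
\[
\lambda = (a^b, 0^{r-b}), \qquad \mu = (c^d, 0^{r-d}),
\]
for some $a, c > 0$ and $1 \leq b, d \leq r$, and the total-sum equality becomes $ab = cd$.

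A short case analysis then concludes the proof. If $b \geq d$ then $\sum_{i=1}^b \mu_i = cd = ab = \sum_{i=1}^b \lambda_i$, so the dominance is an equality at $t = b$; this contradicts strictness at $t<r$ unless $b = r$, but $b = r$ together with $ab = cd$ and $d \leq r$ yields $c \geq a$, contradicting the strict inequality $a > c$ at $t = 1$. Hence $b < d$, and then $\sum_{i=1}^d \lambda_i = ab = cd = \sum_{i=1}^d \mu_i$ forces $d = r$. Thus $(\lambda,\mu) = (a^b, c^r)$ with $ab = cr$ and $b < r$; setting $k = a/r = c/b$ gives $(\lambda,\mu) = k \cdot (r^b, b^r)$, which is parallel to $(a^b, b^a)$ with $(a,b)$ replaced by $(r, b)$, satisfying $r \geq r \geq b > 0$. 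The main obstacle is keeping the case analysis airtight, especially excluding the boundary case $b = r$ where the $t=1$ strict inequality must be invoked; the dimension-count argument and the final parallel-direction verification are routine.
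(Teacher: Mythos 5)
Your proof is correct, but it takes a genuinely different route from the paper's. The paper argues by perturbation: setting $b=\ell(\lambda)$ and $a=\ell(\mu)$, it considers $\lambda\pm\tfrac{\epsilon}{b}(1^b)$ and $\mu\pm\tfrac{\epsilon}{a}(1^a)$, which remain in ${\sf Kostka}_r$ for small $\epsilon>0$ precisely because the inequalities (\ref{eqn:dominance}) are strict; extremality then forces the two perturbed pairs (whose sum is $2(\lambda,\mu)$) to be parallel to $(\lambda,\mu)$, which immediately gives that $\lambda$ and $\mu$ are constant on their supports, and a final rescaling produces $(a^b,b^a)$. You instead use the facial characterization of an extremal ray: the active constraints at $(\lambda,\mu)$ must cut out a line, strict dominance removes the inequalities (\ref{eqn:dominance}) for $t<r$ from the active set, and the count of active chain conditions ($r-k$ for a nonzero partition with $k$ distinct positive part sizes) forces both $\lambda$ and $\mu$ to be rectangles; a short case analysis using strictness at $t=b$, $t=d$, and $t=1$ then pins down the shape. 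The paper's argument needs only the elementary fact that a point on an extremal ray is not a sum of two nonparallel cone elements, while yours leans on the standard polyhedral fact that the minimal face containing a point is cut out by the active constraints (so its dimension is $2r$ minus the rank of their normals); in exchange, your count yields the sharper conclusion that $\ell(\mu)=r$ under the strictness hypothesis, which the paper's statement leaves implicit. One trivial caveat: your exclusion of the case $b=r$ invokes strictness at $t=1$, which presupposes $r\ge 2$; for $r=1$ the lemma is immediate ($\lambda=\mu=(a)$ is parallel to $(1,1)$), so nothing is lost.
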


\begin{proof}
Define $b$ to be the length of $\lambda$ and $a$ that of $\mu$. Consider the families of real partitions 
$$
\lambda_{\pm\epsilon}:=\lambda\pm\frac{1}{b}\epsilon(1^b)
$$
and 
$$
\mu_{\pm\epsilon}:=\mu\pm\frac{1}{a}\epsilon(1^a).
$$

Then $|\lambda_{+\epsilon}| = |\mu_{+\epsilon}|$ (same for $-\epsilon$), and $\lambda_{\pm\epsilon}, \mu_{\pm\epsilon}$ are all real partitions for $\epsilon>0$ small enough. Furthermore, for $\epsilon$ sufficiently small, all inequalities (\ref{eqn:dominance}) hold for the pairs $(\lambda_{\pm\epsilon},\mu_{\pm\epsilon})$. Since $(\lambda,\mu)$ is extremal, it must be true that $\lambda_{+\epsilon}$ and $\lambda_{-\epsilon}$ are parallel and, furthermore, parallel to $\lambda$. This implies that, for any $j\le b$, 
\begin{align*}
(\lambda_j+\epsilon/b)/\lambda_j &= (\lambda_1+\epsilon/b)/\lambda_1,
\end{align*}
hence $\lambda_1=\lambda_j$. Similarly, $\mu_1 = \mu_j$ for any $j\le a$. Now scale $(\lambda,\mu)$ so that $\lambda_1 = a$. Since 
\[a \mu_1 = |\mu| = |\lambda| = ba,\] 
we must have $b = \mu_1$, and the statement is proven.
\end{proof}

 \begin{definition}
Suppose $\nu=(\nu_1\ge \hdots \ge \nu_s)$ and $\xi = (\xi_1\ge\hdots \ge \xi_t)$ are two partitions. If $\nu_s\ge \xi_1$, then 
$$
(\nu_1\ge\hdots\ge\nu_s \ge \xi_1\ge \hdots \ge \xi_t)
$$
is a partition of $s+t$ parts, the \emph{concatenation} of $\nu$ and $\xi$, which we denote by $\nu \concat \xi$. 
\end{definition}

Combined with Proposition~\ref{prop:firstextremal} and Lemma~\ref{lem:easycase}, the next statement completes the proof of
 Theorem~\ref{thm:rays}. 
 
\begin{proposition}
\label{prop:thefinalprop}
Suppose $(\lambda,\mu)$ is an extremal ray of ${\sf Kostka}_r$ where at least one inequality (\ref{eqn:dominance}) with $t<r$ holds with equality. Then $(\lambda,\mu)$ is parallel to 
$$
\left(\underbrace{a,\hdots,a}_{b+\ell},0,\hdots,0; \underbrace{a,\hdots,a}_{\ell},\underbrace{b,\hdots,b}_{a},0\hdots,0\right)
$$
for suitable $a\ge b>0$ and $\ell\ge 0$. 
\end{proposition}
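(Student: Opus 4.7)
The plan is to exhibit a decomposition $(\lambda,\mu) = X + Y$ inside ${\sf Kostka}_r$ driven by the tight dominance equality, and use extremality to force $(\lambda,\mu)$ into the target shape. Let $t_0 \in \{1,\ldots,r-1\}$ be minimal with $\sum_{i=1}^{t_0}\lambda_i = \sum_{i=1}^{t_0}\mu_i$, and set $p := \lambda_{t_0+1}$ (with the convention $\lambda_{r+1}=0$). I would define
\[
X = (\lambda_1 - p, \ldots, \lambda_{t_0} - p, 0, \ldots, 0;\ \mu_1 - p, \ldots, \mu_{t_0} - p, 0, \ldots, 0)
\]
and $Y = (\lambda,\mu) - X$, a pair in which the first $t_0+1$ entries of $Y_\lambda$ and the first $t_0$ entries of $Y_\mu$ are each equal to $p$, with the remaining entries unchanged from $(\lambda,\mu)$. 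The key verifications that $X, Y \in {\sf Kostka}_r$ are $\mu_{t_0}\ge p$ (from monotonicity of $\mu$ and the chain $\mu_{t_0}\ge \lambda_{t_0}\ge\lambda_{t_0+1}=p$, where the first inequality uses tightness at $t_0$ together with strict dominance at $t_0-1$ when $t_0\ge 2$) and $\mu_{t_0+1}\le p$ (rephrasing $\lambda_{t_0+1}\ge \mu_{t_0+1}$, a direct consequence of dominance at $t_0+1$ combined with tightness at $t_0$); dominance and sum-equality checks then propagate routinely.

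Extremality forces each of $X, Y$ to be a nonnegative scalar multiple of $(\lambda,\mu)$, and a direct computation rules out the ``middle'' case $X = c(\lambda,\mu)$ with $0 < c < 1$ (which would force $\lambda_i = p/(1-c)$ for $i\le t_0$, $\lambda_i = 0$ for $i > t_0$, and hence $p = 0$ and $\lambda \equiv 0$). So either $X = 0$ or $Y = 0$. If $Y = 0$, then $(\lambda,\mu)$ is supported on $[1,t_0]$ and is an extremal ray of ${\sf Kostka}_{t_0}$ with no tight dominance constraint at $s < t_0$; by Lemma~\ref{lem:easycase} this ray has the form $(a^b,b^a)$ for some $a\ge b>0$ with $a\le t_0$, matching the target form with $\ell=0$. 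If $X = 0$ and $t_0 \ge 2$, minimality of $t_0$ yields $\lambda_{t_0} < \mu_{t_0}$, whereas $X = 0$ forces $\lambda_{t_0} = p = \mu_{t_0}$, a contradiction. Hence the proof concludes for all $t_0\ge 2$.

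The residual and most delicate case is $t_0 = 1$ with $X = 0$, forcing $\lambda_1=\lambda_2=\mu_1 =: a$. Since the single-step decomposition is trivial here, the plan is to iterate: as long as dominance at the next index remains tight, the same $X/Y$ construction extends an initial constant run by one, eventually producing $\lambda_1 = \cdots = \lambda_{\ell+1} = a$ and $\mu_1 = \cdots = \mu_\ell = a$ for the maximal $\ell$ at which $D_\ell$ is tight but $D_{\ell+1}$ is slack. One then identifies the next tight index $t_1 > \ell$ (which exists since $D_r$ is tight) and applies the same decomposition at $t_1$; the analog of the $X=0$ sub-case is ruled out because it would force $\mu_{\ell+1} = a$, contradicting slackness at $\ell+1$. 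So $Y_1=0$, $(\lambda,\mu)$ is supported on $[1,t_1]$, and analyzing how this decomposition interacts with the block already built up forces $\mu_{\ell+1} = \cdots = \mu_{t_1} =: b < a$, $\lambda$ equal to $a$ on $[1, b+\ell]$ and zero thereafter, and $t_1 = \ell + a$ (with the numerical relation $a(b+\ell)= \ell a + ab$ automatic from $|\lambda|=|\mu|$), yielding the target form. The principal obstacle is this iterative $t_0=1$ analysis: one has to track how the initial constant blocks of $\lambda$ and $\mu$ grow together in lockstep and then pin down how the tail structure emerges at the first post-block tight index.
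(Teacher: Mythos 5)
Your decomposition $X+Y$ is sound (the two inequalities you need, $\mu_{t_0}\ge p$ and $\mu_{t_0+1}\le p$, are exactly what the paper records as (\ref{eqn:Jan26jjj}) and (\ref{eqn:Jan27jjj})), and the portions of your argument up through the constant-prefix iteration do work: extremality forces $X=0$ or $Y=0$; when $t_0\ge 2$ minimality kills $X=0$, and the $Y=0$ case reduces, via the (easy, but worth stating) zero-extension transfer of extremality to the truncation, to Lemma~\ref{lem:easycase}; and repeated $X=0$ builds the block $\lambda_1=\cdots=\lambda_{\ell+1}=\mu_1=\cdots=\mu_\ell=a$. Two small omissions in the iteration are patchable: at an intermediate tight index $j\le \ell$ the subcase $Y=0$ can also occur (it gives $\lambda=\mu=(a^j)$, which is parallel to the ray with $a=b=1$, $\ell=j-1$), and if every $t\le r-1$ is tight there is no ``maximal $\ell$ with slack at $\ell+1$'' (again one gets $\lambda=\mu$ rectangular).

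The genuine gap is the final step, which you yourself flag as the principal obstacle: nothing in your sketch forces $\mu_{\ell+1}=\cdots=\mu_{t_1}$, $\lambda$ rectangular, and $t_1=\ell+a$. Applying your $X/Y$ split at the next tight index $t_1$ only rules out $X=0$, hence only yields that the support lies in $[1,t_1]$ --- no structural information about the tail --- and when $t_1=r$ (which happens exactly for target rays with $a+\ell=r$, e.g.\ $((2,2,0);(2,1,1))$ in ${\sf Kostka}_3$) this step says nothing at all. A direct perturbation in the tail, as in Lemma~\ref{lem:easycase}, is blocked because $\lambda_{\ell+1}=\lambda_\ell$, so one cannot finish ``routinely'' from where you stop. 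What is needed is the paper's key reduction: prove that the truncated pair $(\lambda_{>\ell},\mu_{>\ell})$ is itself an extremal ray of ${\sf Kostka}_{r-\ell}$. The paper does this by lifting any decomposition $(\bar\lambda^{(i)},\bar\mu^{(i)})$ of the tail to a decomposition of $(\lambda,\mu)$ via the scaled concatenations $\bigl(\bar\lambda^{(i)}_1/\lambda_1\bigr)\lambda_{\le \ell}\concat \bar\lambda^{(i)}$ and $\bigl(\bar\lambda^{(i)}_1/\lambda_1\bigr)\mu_{\le \ell}\concat \bar\mu^{(i)}$, and the verification that these are partitions uses precisely your block facts $\mu_\ell=\lambda_1$ and $\lambda_{\ell+1}=\lambda_1$ (the paper's (\ref{eqn:Jan26ggg}) and Claim~\ref{claim:Jan26ttt}). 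Once tail-extremality is established, all dominance inequalities for the tail below the top are strict, so Lemma~\ref{lem:easycase} (equivalently, the induction in the paper's proof) gives the tail the rectangle-pair shape, and the target form, including the integrality of the multiplicities, follows from $|\lambda|=|\mu|$. Without this lifting argument (or an equivalent new construction mixing prefix and tail), your proof is incomplete at the decisive point.
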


\begin{proof}
We proceed by induction on $r\geq 1$. The base case $r=1$ is trivial. Now suppose $r>1$. 
Take $(\lambda,\mu)$ to lie on an extremal ray of ${\sf Kostka}_r$. For simplicity, assume that $(\lambda,\mu)\in {\sf Kostka}_r^\mathbb{Z}$ (since the inequalities defining ${\sf Kostka}_r\subseteq \mathbb{R}^{2r}$ have rational coefficients, this is possible; \emph{cf}. \cite[\S 16.2]{Schrijver}).  Given that at least one inequality (\ref{eqn:dominance}) holds with equality, let $J$ consist of the integers $j\in \{1,\hdots,r-1\}$ such that 
$$
\sum_{i=1}^j \lambda_i = \sum_{i=1}^j \mu_i.
$$
There are two possibilities:
\begin{enumerate}
\item[(A)] For all $j\in J$, $\lambda_{j+1}=0$.
\item[(B)] For some $j\in J$, $\lambda_{j+1}>0$.
\end{enumerate}

In case (A), take $r' = \min J$. Note that 
$\lambda_{r'+1}\ge \mu_{r'+1}$ 
by (\ref{eqn:dominance}) for $t=r'+1$, so $\lambda_k = \mu_k = 0$ for all $r'+1\le k\le r$. Then $(\lambda,\mu)$ comes from ${\sf Kostka}_{r'}$ by appending trailing $0$s. Furthermore, the inequalities (\ref{eqn:dominance}) hold strictly for all $1\le j<r'$, so $(\lambda,\mu) = (a^b,b^a)$, after scaling, by Lemma \ref{lem:easycase}. 

Now assume we are in case (B). Pick any $j\in J$ with $\lambda_{j+1}>0$. Therefore $\lambda_j>0$ as well. Set $\lambda_{\le j}$ to be the partition consisting just of $\lambda_1,\hdots,\lambda_j$. Let $\lambda_{>j}$ be the rest of $\lambda$. That is,
$$
\lambda=\lambda_{\le j}\concat\lambda_{>j}.
$$
Likewise, $\mu = \mu_{\le j}\concat \mu_{>j}$. Define rational partitions
\begin{align*}
\lambda^{(1)}&:=(1^j)\concat \left(\frac{1}{\lambda_j}\right) \lambda_{>j}\\
\lambda^{(2)}&:=(\lambda_{\le j}-(1^j))\concat \left(\frac{\lambda_j-1}{\lambda_j}\right) \lambda_{>j};
\end{align*}
note that $\lambda=\lambda^{(1)}+\lambda^{(2)}$. The sequence $\lambda^{(1)}$ is a rational partition since 
\[1\ge\frac{1}{\lambda_j}\lambda_{j+1}.\] 
Likewise $\lambda^{(2)}$ is nonincreasing since 
$$
\lambda_j-1\ge (\lambda_j-1)\cdot \frac{\lambda_{j+1}}{\lambda_j}.
$$
Furthermore, $\lambda^{(2)}$ has no negative entries since $\lambda_j-1\ge 0$ ($\lambda_j$ is an integer greater than $0$), so $\lambda^{(2)}$ is a rational partition. 

Observe that 
\begin{equation}
\label{eqn:Jan26jjj}
\mu_j\ge \lambda_j
\end{equation}
 since 
\begin{equation}
\label{eqn:Jan27yyy}
\sum_{i=1}^j \lambda_i = \sum_{i=1}^j \mu_i \text{\ \  and \ \ $\sum_{i=1}^{j-1} \lambda_i \ge \sum_{i=1}^{j-1} \mu_i$.}
\end{equation}
In particular $\mu_j>0$ since we already saw above that $\lambda_j>0$. Define
\begin{align*}
\mu^{(1)}&:=(1^j)\concat \left(\frac{1}{\lambda_j}\right) \mu_{>j}\\
\mu^{(2)}&:=(\mu_{\le j}-(1^j))\concat \left(\frac{\lambda_j-1}{\lambda_j}\right) \mu_{>j};
\end{align*}
once again $\mu^{(1)},\mu^{(2)}$ are rational partitions such that $\mu = \mu^{(1)}+\mu^{(2)}$. To see this for
$\mu^{(1)}$, note that
\begin{equation}
\label{eqn:Jan27jjj}
\lambda_j\geq \lambda_{j+1}\geq\mu_{j+1},
\end{equation} 
where the rightmost inequality holds by the equality in (\ref{eqn:Jan27yyy}). For $\mu^{(2)}$, we have
\[\mu_j-1\geq \lambda_j-1=\frac{\lambda_j-1}{\lambda_j}\lambda_j\geq \frac{\lambda_j-1}{\lambda_j}\mu_{j+1},\]
where the last inequality is by (\ref{eqn:Jan27jjj}).

\begin{claim}
We have $\lambda^{(1)}\ge_{\sf Dom} \mu^{(1)}$ and $\lambda^{(2)}\ge_{\sf Dom} \mu^{(2)}$. 
\end{claim}

\begin{proof}
We will show that $(\lambda^{(1)},\mu^{(1)})$ satisfy inequalities (\ref{eqn:dominance}). For any $t\le j$, the inequality (\ref{eqn:dominance}) for $t$ holds (trivially) with equality for $(\lambda^{(1)},\mu^{(1)})$. Therefore the inequalities (\ref{eqn:dominance}) for
$j<t<r$ are equivalent to the inequalities 
\begin{equation}
\label{eqn:truncdom}
\frac{1}{\lambda_j}\sum_{i=j+1}^{t} \lambda_i\ge \frac{1}{\lambda_j}\sum_{i=j+1}^{t} \mu_i,
\end{equation}
which already hold by the assumption 
\[\sum_{i=1}^j \lambda_i = \sum_{i=1}^j \mu_i\] 
and (\ref{eqn:dominance}).
An analogous argument shows $(\lambda^{(2)},\mu^{(2)})$ satisfy (\ref{eqn:dominance}). 
\end{proof}

Therefore we have written 
$$
(\lambda,\mu) = (\lambda^{(1)},\mu^{(1)})+(\lambda^{(2)},\mu^{(2)})
$$
as the sum of elements of ${\sf Kostka}_r$, so these summands must be parallel. This has several implications: apparently $\lambda_1 = \lambda_2 = \hdots = \lambda_j$ and $\mu_1 = \mu_2 = \hdots = \mu_j$. Furthermore, 
\[\lambda_1\ge \mu_1 \ge \mu_j \ge \lambda_j,\] 
where the last inequality is (\ref{eqn:Jan26jjj}). Thus,
\begin{equation}
\label{eqn:Jan26ggg}
\lambda_a=\lambda_b=\mu_a=\mu_b \text{\ for all $a,b\in [1,j]$.}
\end{equation}

\begin{claim} 
\label{claim:Jan26ttt}
$\lambda_{j+1}=\lambda_j$. 
\end{claim}
\begin{proof}
Assuming $\lambda_{j+1}<\lambda_j$, we also have 
\[\mu_{j+1}\le \lambda_{j+1}<\lambda_j =\mu_j,\]
where the leftmost inequality follows from (\ref{eqn:dominance}) and the fact that $j\in J$, and the equality holds by (\ref{eqn:Jan26ggg}).
Hence 
\[(\lambda,\mu)\pm \epsilon(1^j,1^j)\in {\sf Kostka}_r\] 
for all $0<\epsilon\leq 1$. Since $\lambda_{j+1}>0$, we know 
$(1^j,1^j)$ is not parallel to $(\lambda,\mu)$. This implies $(\lambda,\mu)$ is not extremal in ${\sf Kostka}_r$, a contradiction.
Hence $\lambda_{j+1}=\lambda_j$. 
\end{proof}

Recall the truncated partitions $\lambda_{>j} = (\lambda_{j+1}\ge \hdots \ge \lambda_r)$ and $\mu_{>j} = (\mu_{j+1}\ge \hdots \ge \mu_r)$.  Since $\sum_{i=1}^j \lambda_i = \sum_{i=1}^j \mu_i$, we know that $|\lambda_{>j}| = |\mu_{>j}|$. Moreover, $\lambda_{>j} \ge_{\sf Dom} \mu_{>j}$ by (\ref{eqn:truncdom}). So $(\lambda_{>j}, \mu_{>j})\in {\sf Kostka}_{r-j}$. 

\begin{claim}
$(\lambda_{>j},\mu_{>j})$ lies on an extremal ray of ${\sf Kostka}_{r-j}$. 
\end{claim}

\begin{proof}
If not, we can decompose $\lambda_{>j} = \bar \lambda^{(1)}+ \bar \lambda^{(2)}$ and $\mu_{>j} = \bar \mu^{(1)}+ \bar \mu^{(2)}$ where $(\bar \lambda^{(1)},\bar \mu^{(1)})$ and $ (\bar\lambda^{(2)}, \bar \mu^{(2)})$ are nonparallel elements of ${\sf Kostka}_{r-j}$. 
Define the following concatenations of real partitions: 
\begin{align*}
\lambda^{(1)}&:=\left(\frac{\bar\lambda^{(1)}_1}{\lambda_1}\right)\lambda_{\le j}\concat \bar\lambda^{(1)}\\
\lambda^{(2)}&:=\left(\frac{\bar\lambda^{(2)}_1}{\lambda_1}\right)\lambda_{\le j}\concat \bar\lambda^{(2)} 
\end{align*}
and 
\begin{align*}
\mu^{(1)}&:=\left(\frac{\bar\lambda^{(1)}_1}{\lambda_1}\right)\mu_{\le j}\concat \bar\mu^{(1)} \\
\mu^{(2)}&:=\left(\frac{\bar\lambda^{(2)}_1}{\lambda_1}\right)\mu_{\le j}\concat \bar\mu^{(2)} 
\end{align*}

\begin{subclaim}
Each of $\lambda^{(1)}$, $\lambda^{(2)}$, $\mu^{(1)}$, $\mu^{(2)}$ is a real partition with at most $r$ nonzero parts. 
\end{subclaim}

\begin{proof}
In each concatenation, the two pieces are separately real partitions. 
We need only verify that at each ``$\concat$'' the last element of the first piece is weakly bigger than the first element of the second piece. 
\begin{itemize}
\item ($\lambda^{(1)}$): Using (\ref{eqn:Jan26ggg}), $\left(\frac{\bar\lambda^{(1)}_1}{\lambda_1}\right) \lambda_j = \left(\frac{\bar\lambda^{(1)}_1}{\lambda_1}\right) \lambda_1 = \bar\lambda^{(1)}_1$, so the concatenation is good and $\lambda^{(1)}$ is a real partition. 
\item ($\lambda^{(2)}$): Use the same proof as for $\lambda^{(1)}$, switching upper indices everywhere. 
\item ($\mu^{(1)}$): Once again by (\ref{eqn:Jan26ggg}), we have $\mu_j = \lambda_1$. Since $\bar \lambda^{(1)} \ge_{\sf Dom}\bar\mu^{(1)}$, $\bar\lambda^{(1)}_1 \ge \bar\mu^{(1)}_1$. Putting these together, we get  
$\left(\frac{\bar\lambda^{(1)}_1}{\lambda_1}\right) \mu_j = \left(\frac{\bar\lambda^{(1)}_1}{\lambda_1}\right) \lambda_1 = \bar\lambda^{(1)}_1\ge \bar\mu^{(1)}_1$, as needed. 
\item ($\mu^{(2)}$): Use the same proof as for $\mu^{(1)}$. \qedhere
\end{itemize}
\end{proof}

Since the first $j$ entries of $\lambda^{(1)}$ and $\mu^{(1)}$ are the same, $\bar\lambda^{(1)}\ge_{\sf Dom} \bar\mu^{(1)} \implies \lambda^{(1)}\ge_{\sf Dom} \mu^{(1)}$. Likewise $\lambda^{(2)}\ge_{\sf Dom} \mu^{(2)}$. Observe that 
$$
\frac{\bar\lambda^{(1)}_1}{\lambda_1}+\frac{\bar\lambda^{(2)}_1}{\lambda_1} = \frac{(\lambda_{>j})_1}{\lambda_1} = \frac{\lambda_{j+1}}{\lambda_1} = 1, 
$$
where the last equality comes from Claim \ref{claim:Jan26ttt} and (\ref{eqn:Jan26ggg}). This implies that $\lambda = \lambda^{(1)}+\lambda^{(2)}$ and $\mu = \mu^{(1)}+\mu^{(2)}$. Since $(\bar\lambda^{(1)},\bar\mu^{(1)})$ and $(\bar\lambda^{(2)},\bar\mu^{(2)})$ are nonparallel, $(\lambda^{(1)}, \mu^{(1)})$ and $(\lambda^{(2)},\mu^{(2)})$ are also nonparallel. Therefore we have obtained a nonparallel decomposition of $(\lambda,\mu)$ inside ${\sf Kostka}_r$, which contradicts our assumption that $(\lambda,\mu)$ is an extremal ray. 
\end{proof} 

By the induction hypothesis, there exist integers $a\ge b>0$ and $k\ge 0$ such that $(\lambda_{>j}, \mu_{>j})\in {\sf Kostka}_{r-j}$ is parallel to $(a^{k+b}, a^{k}b^a)$. 
Hence there exists a positive real number $q$ such that 
\[(\lambda_{>j}, \mu_{>j}) = q(a^{k+b}, a^{k}b^a).\] 
This makes $\lambda_{j+1}=qa$. By Claim \ref{claim:Jan26ttt} and (\ref{eqn:Jan26ggg}),  
$\lambda = q(a^{j+k+b}) \text{  and  } \mu = q(a^{j+k}b^a)
$, as desired (let $\ell=j+k$). This completes the proof of Proposition~\ref{prop:thefinalprop}.
\end{proof}

\section{Conjectures}\label{sec:5}

\subsection{Generalized Catalan sequences}
Let 
${\vec x}=(x_1,\ldots,x_t)$ be a sequence of nonzero integers. We say ${\vec x}$ is \emph{generalized Catalan} if 
\begin{equation}
\label{eqn:gencat1}
\sum_{i=1}^{t} x_i=0
\end{equation}
 and
 \begin{equation}
 \label{eqn:gencat2}
\sum_{i=1}^q x_i\geq 0 \text{\ for all $1\leq q\leq t$.}
\end{equation} 
Define ${\vec x}$ to be \emph{reducible} if there is a Catalan sublist ${\vec x}^{\circ}=(x_{i_1},x_{i_2},\ldots, x_{i_a})$ such that the
complementary sublist ${\vec x}^{\bullet}$ is also Catalan.

\begin{example}
\label{exa:Jan26xyx}
${\vec x}=(\underline{3},2,1,-2,1,-2,\underline{-1},\underline{-1},2,-1,2,1,-2,-1,-1,\underline{-1})$ is a Catalan list. The underlined
elements and non-underlined elements separately define two Catalan sublists ${\vec x}^{\circ}=(3,-1,-1,-1)$ and
${\vec x}^{\bullet}=(2,1,-2,1,-2,2,-1,2,1,-2,-1,-1)$ that decompose ${\vec x}$.
\end{example}

A maximal consecutive subsequence of ${\vec x}$ consisting of integers of the same sign is called a \emph{run}. By condition (\ref{eqn:gencat1}) there are an even number $2y$ of runs. Let $a_k>0$ be the maximum (in absolute value) of any $x_i$ in run $k$. Define 
\[{\sf cost}({\vec x})=\sum_{k=1}^{2y} a_k \text{\ and ${\sf width}({\vec x})=q$.}\]

\begin{example}
Continuing Example~\ref{exa:Jan26xyx}, $y=4$, ${\sf cost}({\vec x})=3+2+1+2+2+1+2+2=15$ and ${\sf width}({\vec x})=16$.
\end{example}

\begin{conjecture}
\label{conj:cost-width}
If ${\sf cost}({\vec x})<{\sf width}({\vec x})$ then ${\vec x}$ is reducible.\footnote{After the original version of this  preprint was posted to the {\sf arXiv}, J.~Kim \cite[Theorem~1.1]{Kim} proved this conjecture.}
\end{conjecture}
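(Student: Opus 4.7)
The plan is to produce, from the cost-width hypothesis, a nonempty proper subset $S\subset [t]$ such that both $(x_i)_{i\in S}$ and $(x_i)_{i\not\in S}$ are generalized Catalan sublists. Throughout, write $P_k=\sum_{\ell=1}^{k} x_\ell$ for the partial sums.

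First I would dispose of two easy special cases. If $P_\ell=0$ for some $0<\ell<t$, set $S=\{1,\dots,\ell\}$; both restricted subsequences are visibly Catalan. Thus one may assume ${\vec x}$ is \emph{primitive}, meaning $P_\ell>0$ for $0<\ell<t$. Next, if there exists an adjacent pair with $x_i>0$ and $x_{i+1}=-x_i$, take $S=\{i,i+1\}$: the two-element restriction is Catalan, and the complement's partial sums coincide with those of ${\vec x}$ outside $\{i,i+1\}$.

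Beyond these cases, the hypothesis ${\sf cost}({\vec x})<{\sf width}({\vec x})$ must enter. Rewriting it as $\sum_k(|R_k|-a_k)>0$, some run $R_{k_0}$ satisfies $|R_{k_0}|>a_{k_0}$; this run has more same-sign entries than the magnitude of its largest entry, and provides the combinatorial slack we need. I would use this slack to locate a \emph{resolving packet}: a set of positions on which the values sum to zero with nonnegative partial sums, and whose removal leaves a subsequence that is itself Catalan. In favorable cases the packet is a pair $\{i,j\}$ with $x_i=c>0$, $x_j=-c$, and $P_\ell\geq c$ for all $i<\ell<j$, which gives the required split directly since the complement's partial sums are $P_\ell$ outside $[i,j]$ and $P_\ell-c\geq 0$ inside. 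In general, however, the packet may need to be larger: the sequence ${\vec x}=(2,3,-1,-1,-1,-1,-1)$ admits the split $S=\{1,3,4\}$ giving $(2,-1,-1)$ and $(3,-1,-1,-1)$, but no two-element split exists.

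The main obstacle is to construct such a packet in the general case, where positive and negative entries do not match pairwise in magnitude. To handle this I would proceed by induction on $t$, using the slack run $R_{k_0}$ to peel off a carefully chosen balanced packet and arguing that the residual subsequence still satisfies the cost-width inequality (or returns to zero, closing the induction). The delicate combinatorial point, and the technical core of J.~Kim's \cite{Kim} argument, is to show that such a packet always exists and can be extracted while preserving the requisite partial-sum inequalities for both color classes. I expect the case analysis to break along the sign of the slack run and on whether ${\vec x}$ contains entries of unit magnitude, with the existence of a small repeated value in $R_{k_0}$ (forced by pigeonhole from $|R_{k_0}|>a_{k_0}$) driving the reduction.
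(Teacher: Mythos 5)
Your write-up is a plan, not a proof, and the missing step is precisely the content of the statement. The easy reductions are fine: splitting at a vanishing proper partial sum, removing an adjacent cancelling pair, and the ``favorable'' two-element packet $\{i,j\}$ with $x_i=c$, $x_j=-c$ and $P_\ell\ge c$ on the interval between them are all correctly verified. But after that you write that the delicate point is ``to show that such a packet always exists and can be extracted while preserving the requisite partial-sum inequalities,'' and you explicitly defer this to J.~Kim's argument. That existence claim, together with the claim that the residual subsequence either returns to zero or again satisfies ${\sf cost}<{\sf width}$, is essentially a restatement of the conjecture itself; nothing in your sketch shows why the cost-width slack (a run $R_{k_0}$ with $|R_{k_0}|>a_{k_0}$, hence by pigeonhole a repeated value) actually produces a balanced packet whose removal keeps the complement Catalan, nor why the proposed induction hypothesis is maintained after removal (deleting a packet can merge runs and change the run maxima, so it is not even clear that ${\sf cost}<{\sf width}$ is inherited). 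So there is a genuine gap at the technical core.

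For context: in the paper this statement is posed only as a conjecture (Conjecture~\ref{conj:cost-width}); the authors give no proof and the footnote credits the proof to J.~Kim \cite[Theorem~1.1]{Kim}, which appeared after the preprint. So there is no in-paper argument to compare against, and a complete solution would require you to actually carry out (or reconstruct) the packet-extraction argument rather than cite it as the step to be done.
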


\begin{definition}
Let $(\lambda,\mu)\in {\sf Kostka}_{r}^{\mathbb Z}$. A decomposition 
$(\lambda,\mu)=(\lambda^{\bullet},\mu^{\bullet})+(\lambda^{\circ},\mu^{\circ})$
is a \emph{commonly reducible}
if one can choose common columns of $\lambda$ and $\mu$ to give $\lambda^{\bullet}$ and $\mu^{\bullet}$.
\end{definition}

This asserts a strengthening of Theorem~\ref{thm:bound}:

\begin{conjecture}
\label{conj:commonly}
If $\lambda_1>r$ then $(\lambda,\mu)$ is commonly reducible.
\end{conjecture}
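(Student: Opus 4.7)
The plan is to reduce Conjecture~\ref{conj:commonly} to the now-proved Conjecture~\ref{conj:cost-width} via a combinatorial encoding of ``commonly reducible'' in terms of a conjugate-difference sequence. First I would pad $\mu'$ with trailing zeros so that $\lambda'$ and $\mu'$ have the common length $N := \max(\lambda_1,\mu_1)$, and set $y_i := \mu'_i-\lambda'_i$. Then $\sum_i y_i = 0$ and the partial sums $F(k) := \sum_{i\leq k}y_i$ are nonnegative, using $\mu'\geq_{\sf Dom}\lambda'$ (the anti-automorphism of $\lambda\geq_{\sf Dom}\mu$). A commonly reducible decomposition then corresponds to a nonempty proper subset $S\subseteq[N]$ with $\sum_{i\in S}y_i=0$ and with the partial sums of $\vec y|_S$ and $\vec y|_{S^c}$ (indexed in natural order) both nonnegative; this works because $\lambda'$ and $\mu'$ are already nonincreasing, so restricting to any index-subset in natural order yields exactly the dominant-order representatives of the induced column multisets.

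The argument then splits on whether there is a ``match.'' \emph{Case A:} suppose some $i_0\in[N]$ satisfies $\mu'_{i_0}=\lambda'_{i_0}=:c>0$. Take $S=\{i_0\}$, so $\lambda^\bullet=\mu^\bullet=(1^c)$. The complements $\lambda^\circ=\lambda-(1^c)$ and $\mu^\circ=\mu-(1^c)$ are partitions (since both $\lambda$ and $\mu$ admit a column of length $c$), and $\lambda^\circ\geq_{\sf Dom}\mu^\circ$ is immediate because subtracting $(1^c)$ shifts both partial sums by $\min(t,c)$ at each $t$. Since $\lambda_1>r\geq 1$, the decomposition is nontrivial. \emph{Case B:} suppose $\mu'_i\neq\lambda'_i$ for every $i\in[1,N]$. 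Then $\vec x:=\vec y$ is a generalized Catalan sequence of width $N\geq\lambda_1>r$. I would invoke Conjecture~\ref{conj:cost-width} to produce a nontrivial partition $[N]=T\sqcup T^c$ with $\vec x|_T$ and $\vec x|_{T^c}$ both generalized Catalan; setting $S=T$ yields the desired common-column decomposition, since the Catalan conditions on the two sub-sequences translate exactly to the nonnegative partial-sum requirements from the reformulation.

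To invoke Conjecture~\ref{conj:cost-width} in Case~B, I need ${\sf cost}(\vec x)<{\sf width}(\vec x)$. I would first establish the pointwise bound $|y_i|\leq r-1$: for $i\leq \min(\lambda_1,\mu_1)$ both $\mu'_i,\lambda'_i$ lie in $[1,r]$ and differ, hence differ by at most $r-1$; for $\min(\lambda_1,\mu_1)<i\leq N$ one side is zero while the no-match at $i=1$ forces $\ell(\mu)\neq\ell(\lambda)$, so $\min(\ell(\mu),\ell(\lambda))\leq r-1$ and the nonzero side satisfies the same bound. Consequently ${\sf cost}(\vec x)\leq (r-1)\cdot\#\{\text{runs}\}$, so the task reduces to controlling the number of runs of $\vec x$ in terms of the staircase structure of $\mu'$ and $\lambda'$.

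The main obstacle is precisely this last step: bounding the number of sign changes of $\mu'_i-\lambda'_i$ as two nonincreasing staircases in $[0,r]$ with at most $r$ distinct levels each. The guiding structural observation is that extremal configurations which make ${\sf cost}={\sf width}$ (for instance $\vec x=(r-1,-(r-1))$ of width $2$, or purely alternating $(1,-1,1,-1,\ldots)$) can each be shown to force $\lambda_1\leq r$: in the first family the sequence forces $\ell(\mu)=r$ and $\ell(\lambda)$ small, pinning $\lambda_1$ to a value $\leq r$; in the second family the constraint $\mu'_{i+1}\leq\mu'_i$ combined with oscillating signs of $y$ compresses $\lambda'$ and $\mu'$ into a narrow column range. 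A careful case analysis along these lines should show that under $\lambda_1>r$ no such extremal configurations occur, giving the strict inequality needed, and thereby completing the reduction to Kim's theorem.
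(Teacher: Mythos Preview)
Your reduction to Conjecture~\ref{conj:cost-width} via the conjugate-difference sequence $y_i=\mu'_i-\lambda'_i$, together with your Case~A (a matching column gives a trivial common split), is exactly the paper's route. The divergence is in Case~B, where you must establish ${\sf cost}(\vec x)<{\sf width}(\vec x)$ in order to invoke Kim's theorem. Here your argument has a genuine gap.

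Your plan is to combine the pointwise bound $|y_i|\le r-1$ with a bound on the number of runs, giving only ${\sf cost}(\vec x)\le (r-1)\cdot\#\{\text{runs}\}$. As you yourself flag, controlling the number of sign changes of $\mu'-\lambda'$ is the ``main obstacle,'' and the final paragraph is a sketch of heuristics about extremal configurations rather than a proof. This route is unnecessarily hard: two nonincreasing staircases in $[0,r]$ can interlace many times, and there is no obvious combinatorial bound on $\#\{\text{runs}\}$ of the strength you need.

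The paper bypasses run-counting entirely with a one-line telescoping bound: ${\sf cost}(\vec x)\le \ell(\mu)$. If $j_1<j_2<\cdots<j_{2y}$ are the positions where each run attains its maximum absolute value, then
\[
{\sf cost}(\vec x)=\sum_{k\ \text{odd}}(\mu'_{j_k}-\lambda'_{j_k})+\sum_{k\ \text{even}}(\lambda'_{j_k}-\mu'_{j_k}),
\]
and since both $\mu'$ and $\lambda'$ are nonincreasing, the alternating $\lambda'$-contribution is $\le 0$ while the alternating $\mu'$-contribution telescopes to at most $\mu'_{j_1}\le \mu'_1=\ell(\mu)$. Hence ${\sf cost}(\vec x)\le \ell(\mu)\le r<\lambda_1={\sf width}(\vec x)$, and Conjecture~\ref{conj:cost-width} applies immediately. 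Replacing your run-counting attempt with this bound closes the gap; everything else in your proposal is fine.
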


\begin{example}
Let $r=4$ and consider the decomposition
\[(\lambda,\mu)= \ktableau{ \  & \ & \ & \ & \ \\ \ \\ \ \\ }, \ktableau{ \ & \ \\ \ & \ \\ \ & \ \\ \ }=
\ktableau{\ & \ \\ \ \\ \ }, \ktableau{\ \\ \ \\ \ \\ \ } + \ktableau{\ & \ & \ }, \ktableau{\ \\ \ \\ \ } = (\lambda^{\bullet},\mu^{\bullet})+(\lambda^{\circ},\mu^{\circ}).
\]
This demonstrates the common reducibility of $(\lambda,\mu)$. Here we
interpret $(\lambda^{\bullet},\mu^{\bullet})$ as obtained from $(\lambda,\mu)$ by using the (possibly empty) columns $1,5$
and $(\lambda^{\circ},\mu^{\circ})$ as obtained from $(\lambda,\mu)$ by using the complementary columns. Notice that
\[(\lambda,\mu)= \ktableau{ \  & \ & \ & \ & \ \\ \ \\ \ \\ }, \ktableau{ \ & \ \\ \ & \ \\ \ & \ \\ \ }=
\ktableau{\ \\ \ \\ \ }, \ktableau{\ \\ \ \\ \  } + \ktableau{\ & \ & \ & \ }, \ktableau{\ \\ \ \\ \ \\ \ }
\]
is another decomposition that does not exhibit the common reducibility.

The hypothesis of Conjecture~\ref{conj:commonly} cannot be dispensed with. For instance,
\[(\lambda,\mu)= \ktableau{ \ & \ & \ \\ \ & \  & \ \\ \ }, \ktableau{ \ & \ \\ \ & \ \\ \ & \ \\ \ }=
\ktableau{\ \\ \ \\ \ }, \ktableau{\ \\ \ \\ \  } + \ktableau{\ & \ \\ \ & \  }, \ktableau{\ \\ \ \\ \ \\ \ }
\]
is not in the Hilbert basis of ${\sf Kostka}_4$, but is not commonly reducible.
\end{example}

\begin{proposition}
Conjecture~\ref{conj:cost-width} $\implies$ Conjecture~\ref{conj:commonly}.
\end{proposition}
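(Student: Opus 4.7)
The plan is to encode $(\lambda,\mu)$ as a generalized Catalan sequence whose reducibility in the sense of Conjecture~\ref{conj:cost-width} translates directly into common reducibility of $(\lambda,\mu)$. Since $\lambda \geq_{\sf Dom} \mu$ forces $\lambda_1 \geq \mu_1$, every nonempty column of either partition is indexed by $[1,\lambda_1]$. For $1 \leq i \leq \lambda_1$ I would set $y_i := \mu'_i - \lambda'_i$ and form $\vec y = (y_1,\ldots,y_{\lambda_1})$; the dominance $\mu' \geq_{\sf Dom} \lambda'$ together with $|\lambda|=|\mu|$ makes all partial sums of $\vec y$ nonnegative and the full sum zero. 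Let $\vec x$ be obtained from $\vec y$ by deleting its zero entries. A decomposition $\vec x = \vec x^\bullet \sqcup \vec x^\circ$ into two nontrivial Catalan sublists is indexed by some subset $S$ of columns; setting $(\lambda^\bullet)' = (\lambda'_i)_{i \in S}$, $(\mu^\bullet)' = (\mu'_i)_{i \in S}$, and analogously for $S^c$, the Catalan conditions on the two sublists are exactly $(\mu^\bullet)' \geq_{\sf Dom} (\lambda^\bullet)'$ and $(\mu^\circ)' \geq_{\sf Dom} (\lambda^\circ)'$, establishing common reducibility of $(\lambda,\mu)$.

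The argument then splits on whether $\vec y$ has any zero entries. If some $y_{i_0}=0$ with $i_0 \leq \lambda_1$, then column $i_0$ of $\lambda$ is nonempty, so $\lambda'_{i_0} = \mu'_{i_0} \geq 1$; taking $S = \{i_0\}$ yields $\lambda^\bullet = \mu^\bullet = (1^{\lambda'_{i_0}})$, and since deleting a zero from $\vec y$ preserves nonnegativity of all partial sums, a valid $(\lambda^\circ,\mu^\circ) \in {\sf Kostka}_r^{\mathbb Z}$ results. Both pieces are nontrivial because $\lambda_1 \geq r+1 \geq 2$.

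In the remaining case, $y_i \neq 0$ for every $i \in [1,\lambda_1]$, so $\vec x = \vec y$ and ${\sf width}(\vec x) = \lambda_1 > r$. The central estimate to prove is ${\sf cost}(\vec x) \leq r$. I would enumerate the runs $R_1,\ldots,R_{2y}$ of $\vec y$ (alternating in sign, starting positive and ending negative, as forced by the Catalan property in the absence of zeros), pick $m_k \in R_k$ with $|y_{m_k}| = a_k$ so that $m_1 < m_2 < \cdots < m_{2y}$ and $a_k = (-1)^{k+1} y_{m_k}$, and then write
$$
{\sf cost}(\vec x) = \sum_{k=1}^{2y}(-1)^{k+1}(\mu'_{m_k} - \lambda'_{m_k}) = S_\mu - S_\lambda,
$$
where $S_\nu := \sum_{k=1}^{2y}(-1)^{k+1}\nu'_{m_k}$. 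Abel-style regrouping, using monotonicity of $\lambda'$ and $\mu'$, gives
$$
S_\lambda = \sum_{j=1}^{y}(\lambda'_{m_{2j-1}} - \lambda'_{m_{2j}}) \geq 0 \quad\text{and}\quad S_\mu = \mu'_{m_1} - \sum_{j=1}^{y-1}(\mu'_{m_{2j}} - \mu'_{m_{2j+1}}) - \mu'_{m_{2y}} \leq \mu'_{m_1} \leq \ell(\mu) \leq r,
$$
so ${\sf cost}(\vec x) \leq r < {\sf width}(\vec x)$. Conjecture~\ref{conj:cost-width} then produces the required nontrivial Catalan decomposition of $\vec x$, yielding common reducibility of $(\lambda,\mu)$ by the translation already established.

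The main challenge will be the cost estimate in the no-zero case; once that Abel-summation inequality is in hand, the invocation of Conjecture~\ref{conj:cost-width} and the passage from Catalan sublists back to a column decomposition of $(\lambda,\mu)$ are routine.
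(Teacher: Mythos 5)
Your proposal is correct and follows essentially the same route as the paper: encode the pair via $x_j=\mu_j'-\lambda_j'$, dispose of zero entries by splitting off a single common column, and then compare ${\sf cost}$ with ${\sf width}=\lambda_1$ before invoking Conjecture~\ref{conj:cost-width} and translating the Catalan decomposition back into common columns. The only difference is that you spell out, via the Abel-type regrouping, the inequality ${\sf cost}({\vec x})\leq \ell(\mu)$, which the paper asserts as a routine check from the definitions.
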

\begin{proof}
Assume $\lambda\geq_{\sf Dom} \mu$. Define a sequence $\vec{x}$ of length $\lambda_1$ by
$x_j:=\mu_j'-\lambda_j'$. 
Since $|\lambda|=|\mu|$, (\ref{eqn:gencat1}) holds. If $x_j=0$ for any $j$, then
$(\lambda,\mu)$ is trivially commonly reducible, so we may assume otherwise. Now notice that
$\lambda'\leq_{\sf Dom} \mu'$ (equivalently, $\mu\leq_{\sf Dom}\lambda$) is clearly equivalent to (\ref{eqn:gencat2}). Furthermore, from the definitions 
one checks ${\sf cost}({\vec x})\leq \ell(\mu)$. Hence, under the hypothesis of Conjecture~\ref{conj:commonly},
\[{\sf cost}({\vec x})\leq \ell(\mu)<r<\lambda_1={\sf width}({\vec x}).\] 
Therefore Conjecture~\ref{conj:cost-width}'s conclusion is that
${\vec x}$ is decomposable into generalized Catalan sequences 
${\vec x}^{\bullet}$ and ${\vec x}^{\circ}$, which correspond to, say, columns
$C$ and $[\lambda_1]-C$. Then define $\lambda^\bullet$ and $\lambda^{\circ}$ to be columns $C$ and $[\lambda_1]-C$
of $\lambda$, respectively. Similarly define $\mu^{\bullet}$ and $\mu^{\circ}$. Since ${\vec x}^{\bullet}$ is Catalan, by the equivalence
stated earlier in this proof, $\lambda^{\bullet}\geq_{\sf Dom} \mu^{\bullet}$. Similarly $\lambda^{\circ}\geq_{\sf Dom} \mu^{\circ}$.
Thus 
$(\lambda,\mu)=(\lambda^{\bullet},\mu^{\bullet})+
(\lambda^{\circ},\mu^{\circ})$ 
witnesses the common reducibility.
\end{proof}

Using an argument similar to the one for \cite[Theorem 6.1]{Sturmfels} we have a proof of Conjecture~\ref{conj:commonly} in the
case $y=1$.

Let us also remark that one can study the decision problem of whether a generalized Catalan sequence ${\vec x}$ is reducible. A modification of the argument
for Theorem~\ref{thm:complexity} shows that this problem is also ${\sf NP}$-complete.

%

\subsection{The Littlewood-Richardson cone}\label{sec:5.2}
For partitions 
\[\lambda=(\lambda_1,\lambda_2,\ldots,\lambda_r), \mu=(\mu_1,\mu_2,\ldots,\mu_r), \nu=(\nu_1,\nu_2,\ldots,\nu_r),\]
let $c_{\lambda,\mu}^{\nu}$ be the \emph{Littlewood-Richardson coefficient}. Combinatorially, $c_{\lambda,\mu}^{\nu}$ counts the
number of semistandard tableaux $T$ of skew shape $\nu/\lambda$ of content $\mu$ such that the right to left, top to bottom, row reading word is a ballot sequence \cite{ECII}. Define
\[{\sf LR}_r^{\mathbb Z}=\{(\lambda,\mu,\nu): c_{\lambda,\mu}^{\nu}>0\}.\]
Like ${\sf Kostka}_r^{\mathbb Z}$, ${\sf LR}_r^{\mathbb Z}$ is a finitely generated semigroup; more precisely,
${\sf LR}_r^{\mathbb Z}$ are the lattice points of a pointed polyhedral cone ${\sf LR}_r$ defined by the celebrated \emph{Horn inequalities}; we refer to the survey \cite{Fulton} and the references therein. 
A.~Zelevinsky \cite{Zelevinsky} raised the question of studying the Hilbert basis of
${\sf LR}_r^{\mathbb Z}$. Analogous to Theorem~\ref{thm:complexity}, we conjecture that the decision problem of deciding if $(\lambda,\mu,\nu)$
is a Hilbert basis element is also {\sf NP}-complete. 

In a previous preprint version of this work, we conjectured:
\begin{conjecture}
\label{conj:LRthing}
If $(\lambda,\mu,\nu)$ is in the Hilbert basis of ${\sf LR}_r^{\mathbb Z}$ then $\nu_1\leq r$.
\end{conjecture}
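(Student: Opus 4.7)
The plan is to develop a Littlewood--Richardson analog of the KGR graph technology of Section~\ref{sec:3} and to mimic the pigeonhole argument from Section~\ref{sec:2.2}. The first step is to choose, among the combinatorial models for $c_{\lambda,\mu}^{\nu}$ (LR skew tableaux, Berenstein--Zelevinsky triangles, Knutson--Tao hives, puzzles), one that admits a Ryser-style canonical representative. Hives look most promising: they are the integer points of a polytope whose three sides carry $\lambda$, $\mu$, $\nu$, and the rhombus inequalities generalize the row/column marginal conditions on $\{0,1\}$-matrices that underpin the Gale--Ryser theorem.

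Concretely, I would first define a canonical hive $H(\lambda,\mu,\nu)$ via a greedy, lex-minimal construction analogous to Ryser's algorithm, then extract a $\pm 1$-valued auxiliary array $H^*(\lambda,\mu,\nu)$ playing the role of $A^*(\lambda,\mu)$, together with a directed graph $G(\lambda,\mu,\nu)$ whose arcs track the tight rhombus inequalities at $H(\lambda,\mu,\nu)$. Next, I would define a notion of \emph{conservative subgraph} of $G(\lambda,\mu,\nu)$, tailored so that its existence produces a nontrivial decomposition
\[
(\lambda,\mu,\nu)=(\lambda^{\bullet},\mu^{\bullet},\nu^{\bullet})+(\lambda^{\circ},\mu^{\circ},\nu^{\circ})
\]
inside ${\sf LR}_r^{\mathbb Z}$. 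Granted such machinery, the Width Bound proof would proceed by assuming $\nu_1>r$: the side of the hive labeled by $\nu$ contributes $\nu_1-1$ internal ``horizontal'' transitions distributed among only $r-1$ relevant rows, so pigeonhole would force two such transitions into a common row, and an LR analog of Lemma~\ref{lemma:samecol} would then locate a conservative subgraph. Additional care would be required for the boundary equality cases, paralleling the rectangle analysis that concluded the $\lambda_1=r$ case of Theorem~\ref{thm:bound}.

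The main obstacle, and the place where I expect the argument to founder, is the very first conceptual step: building a decomposition mechanism from local tightness data. In the Kostka setting the canonical matrix $A(\lambda,\mu)$ has a crucially \emph{column-local} structure, so any subset $S$ of columns automatically yields valid marginals for $(\lambda^{\bullet},\mu^{\bullet})$ and $(\lambda^{\circ},\mu^{\circ})$. Hives, by contrast, live inside the Horn cone, which couples $\lambda$, $\mu$, and $\nu$ through a globally intertwined family of inequalities; there is no natural indexing of hive entries by ``columns'' whose subset sums split the boundary into two members of ${\sf LR}_r^{\mathbb Z}$. Equivalently, splitting a hive into two integer hives with compatible boundaries is not controlled by any local, graph-theoretic certificate of the kind that works for Gale--Ryser matrices. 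This global coupling is the reason the pigeonhole step cannot be expected to survive the transition from Kostka to Littlewood--Richardson, and (in light of Theorem~\ref{thm:counterex}) it is presumably the mechanism by which the conjecture ultimately fails for sufficiently wide $\nu$.
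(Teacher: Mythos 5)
Your proposal cannot be completed, because the statement you are trying to prove is false: Conjecture~\ref{conj:LRthing} is disproved in this paper. Appendix~\ref{appendix:B} (Theorem~\ref{thm:counterex}) constructs, for every $r=3k-1$ with $k\ge 2$, a type~I extremal ray $(\lambda,\mu,\nu)$ of ${\sf LR}_r$ via Belkale's inductive ray algorithm, with $\lambda=((k)^{k-1},(k-1)^k)$ and $\nu_1=k(k-1)=\frac{r+1}{3}\left(\frac{r+1}{3}-1\right)=\Omega(r^2)$; since $\lambda_1=k$ and $\lambda_k=k-1$ are coprime, this triple is the first lattice point on its ray and hence lies in the Hilbert basis of ${\sf LR}_r^{\mathbb Z}$. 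Already for $k=4$ (so $r=11$, $\nu_1=12$) this violates $\nu_1\le r$, and the full statement of Theorem~\ref{thm:counterex} shows no linear bound $f(r)$ on $\nu_1$ can hold for Hilbert basis elements. So the pigeonhole strategy you outline is not merely hard to implement for hives; any such argument must fail, because its conclusion is false. (The same construction also refutes Belkale's speculation about the conformal block cone ${\sf CB}_r$, as explained at the end of Appendix~\ref{appendix:B}.)

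That said, your diagnosis of \emph{why} the Kostka argument does not transfer is essentially the right one, and it is consistent with how the failure actually manifests. The Width Bound for ${\sf Kostka}_r^{\mathbb Z}$ rests on the column-local structure of Ryser's canonical matrix: any subset $S$ of columns of $A(\lambda,\mu)$ automatically produces candidate marginals, so a local graph-theoretic certificate (a conservative subtree) yields a genuine decomposition in the semigroup. For ${\sf LR}_r^{\mathbb Z}$ the Horn inequalities couple $\lambda,\mu,\nu$ globally, there is no canonical ``column'' decomposition of a hive whose pieces stay in the cone, and the counterexamples show that wide $\nu$ does not force reducibility. If you want a positive takeaway, the correct replacement for your proposed machinery in the LR setting is Belkale's description of the extremal rays (and its equivariant extension in \cite{K:21}), which is exactly the tool the appendix uses to exhibit irreducible triples with $\nu_1$ growing quadratically in $r$.
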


However, we now know this to be false, due to an explicit family of counterexamples that were constructed
by the second and third coauthors; see Appendix B.

\appendix

\section{${\sf NP}$-completeness and the Hilbert basis}\label{sec:2}

The \emph{Subset Sum problem} ({\tt SubsetSum}) takes as input positive integers $a_1,\ldots,a_d$ and 
$b$. The output is ``yes'' if there exists $S\subseteq \{1,2,\ldots,d\}$ such that
\begin{equation}
\label{eqn:thesum}
\sum_{i\in S} a_i = b.
\end{equation}
This problem is well-known to be {\sf NP}-complete. 

Clearly ${\tt KostkaHilbert} \in {\sf NP}$: given a proposed decomposition (\ref{eqn:decomposition}) it takes polynomial time to check it.
Thus, to prove that {\tt KostkaHilbert} is
{\sf NP}-{\sf complete} we give a polynomial-time reduction of {\tt SubsetSum} to it.

Given the input $a_1,\ldots a_d, b$ to {\tt SubsetSum}, let 
\[A=\sum_{i=1}^d a_i.\]  
After sorting (which takes $O(d\log d)$-time), we may assume
$a_1\geq a_2\geq \cdots \geq a_d$. We may also assume $b\leq A$, since otherwise
{\tt SubsetSum} is trivial; this condition can be checked in $O(d)$-time. Now define 
$\overline{\lambda}$ to have columns 
$A+1,a_1,a_2,\ldots,a_d$. Set $\overline{\mu}$ to have columns $A+1+(A-b)$ and $b$.

\begin{lemma}
\label{lemma:domcheck}
$\overline{\lambda}\geq_{\sf Dom} \overline{\mu}$.
\end{lemma}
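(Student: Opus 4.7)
The plan is to convert the statement into a dominance check on the conjugate partitions, which is the natural thing to do here since $\overline{\lambda}$ and $\overline{\mu}$ have been specified by their \emph{column} data. By the anti-automorphism property of $\leq_{\sf Dom}$ under conjugation (already invoked in the proof of Proposition~\ref{thm:0-1}), showing $\overline{\lambda} \geq_{\sf Dom} \overline{\mu}$ is equivalent to showing $\overline{\mu}' \geq_{\sf Dom} \overline{\lambda}'$, and the conjugates are immediately readable off the construction.

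First I would spell out the conjugates: $\overline{\lambda}' = (A+1, a_1, a_2, \ldots, a_d)$ and $\overline{\mu}' = (2A+1-b,\, b,\, 0, \ldots, 0)$ (padded to length $d+1$). Both are genuine partitions: $A+1 > A \geq a_1 \geq \cdots \geq a_d$ by the sorting step and the definition $A = \sum a_i$, and $2A+1-b \geq b$ because $b \leq A$. Once this is in place, I would write down the partial sums of each, which are strikingly simple: for $\overline{\mu}'$ they are $2A+1-b$ at $t=1$ and the constant value $2A+1$ for all $t \geq 2$, while for $\overline{\lambda}'$ they are $A+1$ at $t=1$ and $A+1 + \sum_{i=1}^{t-1} a_i$ for $2 \leq t \leq d+1$.

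Comparing these row by row, the required inequalities $\sum_{i\leq t}\overline{\mu}'_i \geq \sum_{i\leq t}\overline{\lambda}'_i$ reduce to (i) $A+1 \leq 2A+1-b$ for $t=1$, which is exactly the standing hypothesis $b \leq A$; and (ii) $A+1+\sum_{i=1}^{t-1}a_i \leq 2A+1$ for $2 \leq t \leq d+1$, which follows from $\sum_{i=1}^d a_i = A$ (with equality when $t = d+1$, as required since $|\overline{\lambda}| = |\overline{\mu}|$).

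I do not anticipate any real obstacle: the lemma is just the sanity check that the reduction from \texttt{SubsetSum} to \texttt{KostkaHilbert} lands inside $\sf{Kostka}_r^{\mathbb Z}$, and the only subtlety is choosing to work with conjugates so that the partial sums of $\overline{\mu}'$ collapse to the two values $2A+1-b$ and $2A+1$.
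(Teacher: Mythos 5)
Your proof is correct and follows essentially the same route as the paper: both pass to conjugates via the anti-automorphism property of $\leq_{\sf Dom}$ and then observe that, because $\overline{\mu}'$ has only two rows, the dominance $\overline{\mu}'\geq_{\sf Dom}\overline{\lambda}'$ reduces to the single inequality $2A+1-b\geq A+1$ (i.e., $b\leq A$) together with the equality of total sizes. Your version just writes out the partial sums more explicitly than the paper does.
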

\begin{proof}
By construction, 
$|\overline{\lambda}|=(A+1)+A=A+1+(A-b)+b=|\overline{\mu}|$.
Since $\leq_{\sf Dom}$ is an anti-automorphism with respect to conjugation \cite[Section~7.2]{ECII},
it is equivalent to show that $\overline{\mu}'\geq_{\sf Dom} \overline{\lambda}'$. Since 
$\overline{\mu}'$ only has two rows, this inequality is immediate since $\overline{\mu}_1'=A+1+(A-b)\geq A+1$.
\end{proof}

In view of Lemma~\ref{lemma:domcheck}, $(\overline{\lambda},\overline{\mu})$ is not a Hilbert basis
element if it has a nontrivial decomposition (\ref{eqn:decomposition}). This next observation is clear:

\begin{lemma}
\label{lemma:gidoncol}
$(\lambda,\mu)$ is not a Hilbert basis element  if and only if 
both of the following hold:
\begin{itemize}
\item[(I)]
the columns
of $\lambda^{\bullet}$ (resp.~$\mu^{\bullet}$) and $\lambda^{\circ}$ (resp.~$\mu^{\circ}$) have multiset union equal to the columns of
$\lambda$ (resp.~$\mu$); and
\item[(II)] $\lambda^{\bullet}\geq_{\sf Dom} \mu^{\bullet}$ and
$\lambda^{\circ}\geq_{\sf Dom} \mu^{\circ}$.
\end{itemize}
\end{lemma}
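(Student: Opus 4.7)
The plan is to treat this lemma as a straightforward translation between ``coordinate-wise sum'' decompositions and ``column-multiset-union'' decompositions, combined with the standard criterion (\ref{eqn:itistextbook}) for ${\sf Kostka}_r^{\mathbb Z}$-membership. The only content of substance is a small identity about columns and coordinate sums.

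First I would isolate the following elementary fact: for partitions $\nu,\xi\in {\sf Par}_r$ and $\pi := \nu+\xi$ (coordinate-wise), the multiset of column heights of $\pi$ equals the multiset union of column heights of $\nu$ and $\xi$. The one-line proof is that, for any partition $\rho$, the number of columns of height at least $h$ is $\rho_h$, and $\pi_h = \nu_h + \xi_h$. Counting in both directions gives an equivalence: $\pi = \nu+\xi$ coordinate-wise if and only if the column multisets of $\nu$ and $\xi$ partition the column multiset of $\pi$.

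For the $(\Rightarrow)$ direction, I would unwind reducibility: since $(\lambda,\mu)$ is not a Hilbert basis element, write $(\lambda,\mu) = (\lambda^{\bullet},\mu^{\bullet}) + (\lambda^{\circ},\mu^{\circ})$ nontrivially with both summands in ${\sf Kostka}_r^{\mathbb Z}$. Condition (II) then follows from (\ref{eqn:itistextbook}). Condition (I) is obtained by applying the column identity separately to $\lambda = \lambda^{\bullet}+\lambda^{\circ}$ and $\mu = \mu^{\bullet}+\mu^{\circ}$.

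For the $(\Leftarrow)$ direction, assume (I) and (II) hold with both pieces nontrivial. The converse half of the column identity promotes (I) to the coordinate-wise identities $\lambda = \lambda^{\bullet}+\lambda^{\circ}$ and $\mu = \mu^{\bullet}+\mu^{\circ}$. Summing sizes through (I) gives $|\lambda^{\bullet}|+|\lambda^{\circ}| = |\lambda| = |\mu| = |\mu^{\bullet}|+|\mu^{\circ}|$, while (II) yields $|\lambda^{\bullet}|\geq|\mu^{\bullet}|$ and $|\lambda^{\circ}|\geq|\mu^{\circ}|$ (the $t=r$ dominance inequality in each). Both must be equalities, so each pair satisfies every defining inequality of ${\sf Kostka}_r$; by (\ref{eqn:itistextbook}), each lies in ${\sf Kostka}_r^{\mathbb Z}$, exhibiting the required nontrivial decomposition. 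I do not anticipate a serious obstacle here; the lemma is mostly bookkeeping once the column identity is established, and the single subtle point is extracting the size equalities $|\lambda^{\bullet}|=|\mu^{\bullet}|$ and $|\lambda^{\circ}|=|\mu^{\circ}|$ from the interplay of (I) with the weak dominance bounds in (II).
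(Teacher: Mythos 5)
Your proof is correct, and it fills in exactly the bookkeeping the paper leaves implicit: the paper states this lemma without proof (``This next observation is clear''), the underlying fact being precisely your identity that coordinate-wise addition of partitions corresponds, under conjugation, to multiset union of columns. You also correctly handle the one genuinely non-obvious point, deriving $|\lambda^{\bullet}|=|\mu^{\bullet}|$ and $|\lambda^{\circ}|=|\mu^{\circ}|$ from (I) together with the $t=r$ dominance inequalities, so nothing further is needed.
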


To complete the proof of Theorem~\ref{thm:complexity}, it remains to show:

\begin{proposition}
$(\overline{\lambda},\overline{\mu})$ is not a Hilbert basis if and only if {\tt SubsetSum} returns {\tt Yes}.
\end{proposition}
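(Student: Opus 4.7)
The plan is to apply Lemma~\ref{lemma:gidoncol}, which converts the Hilbert basis question into a combinatorial problem about splitting the multiset of columns of $\overline\lambda$ and $\overline\mu$ into two parts, each satisfying dominance. The key leverage is that $\overline\mu$ has only two columns, so any nontrivial splitting must place these on opposite sides. This makes the reverse direction essentially transparent and reduces the forward direction to extracting the subset $S$ out of $\overline\lambda$'s columns.

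For the ``if'' direction, I would take a subset $S$ with $\sum_{i \in S} a_i = b$ and set $\lambda^\circ$ to consist of the columns $\{a_i : i \in S\}$ paired with $\mu^\circ$ being the single column of length $b$, and $\lambda^\bullet$ to consist of the column of length $A+1$ together with $\{a_i : i \notin S\}$ paired with $\mu^\bullet$ being the single column of length $A+1+(A-b)$. Box counts match by construction. Dominance on each side is easiest to verify via conjugates: $(\mu^\circ)' = (b)$ dominates $(\lambda^\circ)'$ because every $a_i$ with $i \in S$ satisfies $a_i \leq b$; similarly $(\mu^\bullet)' = (A+1+(A-b))$ dominates $(\lambda^\bullet)'$ (whose largest part is $A+1$) because $A \geq b$. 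Condition (I) of Lemma~\ref{lemma:gidoncol} holds by construction, so $(\overline\lambda, \overline\mu)$ is reducible.

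For the ``only if'' direction, suppose a nontrivial decomposition exists. Since $\overline\mu$ has only two columns (of lengths $A+1+(A-b)$ and $b$), condition (I) of Lemma~\ref{lemma:gidoncol} forces these two columns to lie on opposite sides; relabeling, $\mu^\circ$ is the single column of length $b$ and $\mu^\bullet$ is the single column of length $A+1+(A-b)$. Then $(\mu^\circ)' = (b)$, so the dominance $\lambda^\circ \geq_{\sf Dom} \mu^\circ$, rewritten as $(\mu^\circ)' \geq_{\sf Dom} (\lambda^\circ)'$, forces every column length of $\lambda^\circ$ to be at most $b$. Since $A+1 > A \geq b$, the length-$(A+1)$ column of $\overline\lambda$ cannot be in $\lambda^\circ$, so $\lambda^\circ$ consists of some subset $\{a_i : i \in S\}$ of the $a_i$-columns. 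Matching sizes $|\lambda^\circ| = |\mu^\circ| = b$ gives $\sum_{i \in S} a_i = b$, a {\tt SubsetSum} solution. Finally, the reduction itself writes down $O(d)$ column lengths, each of bit size polynomial in the input, so it runs in polynomial time.

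The main obstacle, such as it is, lies in the forward direction: one must convince oneself that the only way to split $\overline\mu$'s two columns is the ``obvious'' one, and that the length-$(A+1)$ column is genuinely forced to travel with the long column of $\overline\mu$. Both follow from the size comparisons $A+1 > b$ and $A \geq b$ built into the construction of $\overline\lambda$ and $\overline\mu$, so there is no genuine combinatorial difficulty once Lemma~\ref{lemma:gidoncol} is in hand.
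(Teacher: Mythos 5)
Your proof is correct and follows essentially the same route as the paper: the same decomposition (the $S$-columns of $\overline\lambda$ paired with the length-$b$ column of $\overline\mu$, the rest plus the length-$(A+1)$ column paired with the long column), the same use of Lemma~\ref{lemma:gidoncol}, and the same key size observation $A+1>b$ forcing the long $\overline\lambda$-column away from the $b$-side. The only cosmetic difference is that you verify dominance via conjugates (a single-row partition dominates any partition of the same size), while the paper notes that a one-column partition is dominance-minimal; these are trivially equivalent.
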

\begin{proof}
($\Rightarrow$) Suppose we have a nontrivial decomposition (\ref{eqn:decomposition}) of
$(\overline{\lambda},\overline{\mu})$. By Lemma~\ref{lemma:gidoncol}(I), since $\mu$ has precisely two columns (of length $A+1+(A-b)$ and $b$), we may
assume $\overline{\mu}^{\bullet}=(1^b)$. For the same reason, the columns of 
$\overline{\lambda}^{\bullet}$
must be a subset $S'$ of the columns of ${\overline\lambda}$. However, since the first column of $\overline\lambda$ is of length $A+1>b$, that column cannot be in $S'$. Hence
we may take $S$ to be the columns of $\lambda$ used in $S'$ (after reindexing). By construction, these columns are of length $a_1,\ldots,a_d$ and since 
\[\sum_{i\in S} a_i=|\overline{\lambda^{\bullet}}|=|\overline{\mu}^{\bullet}|=|(1^b)|=b,\]
{\tt SubsetSum} returns {\tt Yes}, as desired. 

\noindent
($\Leftarrow$) Suppose $S\subseteq \{1,2,\ldots,d\}$ satisfies (\ref{eqn:thesum}).
Let $\overline{\lambda}^{\bullet}$ be the partition formed by taking the columns of 
$\lambda$ indexed by $S$; $\overline{\lambda}^{\circ}$ is the partition consisting of the
remaining columns of $\lambda$ together with the first column (of length $A+1+(A-b)$)
of $\overline{\lambda}$.  Now, let $\overline{\mu}^{\bullet}=(1^b)$ and $\overline{\mu}^{\circ}=1^{A+1+(A-b)}$. Clearly, (\ref{eqn:decomposition}) holds. Moreover, since 
$|\overline{\lambda}^{\bullet}|=b=|\overline{\mu}^{\bullet}|$ and $\overline{\mu}^{\bullet}$
is a column partition, $\overline{\mu}^{\bullet}\leq_{\sf Dom} \overline{\lambda}^{\bullet}$
is immediate. Similarly, $\overline{\mu}^{\circ}\leq_{\sf Dom} \overline{\lambda}^{\circ}$ is true. Hence $(\overline{\lambda},\overline{\mu})$ is reducible and therefore not a Hilbert basis element.
\end{proof}

Theorem~\ref{thm:complexity} can be compared to two related facts. By (\ref{eqn:dominance}) one has a
polynomial-time algorithm (using the same input encoding) for deciding if $(\lambda,\mu)\in {\sf Kostka}_r^{\mathbb Z}$. 
In contrast, \emph{counting} $K_{\lambda,\mu}$ is $\#{\sf P}$-complete (using an encoding of
partitions by rows) \cite{Narayanan}.

\section{Counterexample to Conjecture~\ref{conj:LRthing}, relationship to P.~Belkale's conformal block question (by J.~Kiers and G.~Orelowitz)}
\label{appendix:B}

The following result gives a counterexample to Conjecture~\ref{conj:LRthing}. In fact, it also shows that one 
cannot replace the bound of $r$ by any linear function of $r$.

\begin{theorem}
\label{thm:counterex}
There does not exist any linear function $f(r)$ such that for all $(\lambda,\mu,\nu)$ in the Hilbert basis of ${\sf LR}_r^{\mathbb Z}$, $\nu_1\leq f(r)$. 
\end{theorem}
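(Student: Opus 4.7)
The strategy is to exhibit, for arbitrarily large $r$, explicit Hilbert basis elements $(\lambda^{(r)},\mu^{(r)},\nu^{(r)}) \in {\sf LR}_r^{\mathbb Z}$ with $\nu_1^{(r)}$ growing super-linearly in $r$. Since any linear $f(r) = ar+b$ is eventually dominated by $\nu_1^{(r)}$, this precludes every uniform linear bound.

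We produce these triples as primitive lattice points on explicit extremal rays of the cone ${\sf LR}_r$. As in the discussion preceding Theorem~\ref{thm:rays}, the primitive point on any extremal ray is automatically a Hilbert basis element: it cannot be written as a sum of two nonparallel cone elements (by extremality), nor as a sum of two nonzero parallel lattice elements (by primitivity). We therefore propose a one-parameter family of triples whose entries are explicit polynomial functions of $r$, designed so that $\nu_1^{(r)}$ has degree at least two in $r$. Positivity $c_{\lambda^{(r)},\mu^{(r)}}^{\nu^{(r)}} > 0$ is confirmed by exhibiting a single Littlewood--Richardson tableau of shape $\nu^{(r)}/\lambda^{(r)}$ and content $\mu^{(r)}$ (whose reverse row reading word is a ballot sequence), or equivalently by invoking the Horn saturation theorem.

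The principal obstacle is to verify extremality of the proposed ray. Since ${\sf LR}_r \subset {\mathbb R}^{3r}$ is a cone of dimension $3r-1$, an extremal ray must satisfy $3r-2$ linearly independent defining inequalities of ${\sf LR}_r$ with equality. The relevant inequalities are the Horn-type inequalities (see \cite{Fulton}) together with the dominance/positivity conditions on $\lambda,\mu,\nu$. We must identify which of these are saturated at our family and check that the chosen subset has rank $3r-2$, which reduces to an explicit rank computation on a $(3r-2)\times 3r$ matrix of inequality normals; this is where the bulk of the work lies, since there are exponentially many Horn inequalities to inspect and the correct saturated subset must be isolated. Primitivity (gcd of the displayed entries equal to $1$) is a routine check once the polynomial formulas are fixed. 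Combining extremality, primitivity, and LR positivity yields the desired Hilbert basis elements, and the super-linear growth of $\nu_1^{(r)}$ then completes the argument.
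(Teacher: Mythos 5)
Your proposal is a strategy outline, not a proof: the theorem's entire content is the existence of Hilbert basis elements with $\nu_1$ growing superlinearly, and you never exhibit the family. You write that you ``propose a one-parameter family of triples whose entries are explicit polynomial functions of $r$, designed so that $\nu_1^{(r)}$ has degree at least two in $r$,'' but no candidate partitions are given, so there is nothing to check positivity, primitivity, or extremality against. You also explicitly defer the extremality verification (``this is where the bulk of the work lies''), which is precisely the hard step: it is not a priori clear that any extremal ray of ${\sf LR}_r$ has $\nu_1$ quadratic in $r$, and your suggested method --- isolating a saturated subset of the exponentially many Horn inequalities and doing a rank computation on a $(3r-2)\times 3r$ matrix --- is left entirely unexecuted. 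So while your high-level reduction is sound (the primitive lattice point on an extremal ray is indeed in the Hilbert basis, exactly as the paper uses), the proposal has a genuine gap everywhere the actual mathematics must happen.

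For comparison, the paper avoids your Horn-saturation route altogether. It invokes Belkale's inductive description of the extremal rays of ${\sf LR}_r$ (the type I ray-generation algorithm, as recalled in \cite{K:21}): starting from Schubert classes indexed by explicit subsets $I,J,K\subset[3k-1]$ with $c^{\tau(K)}_{\tau(I),\tau(J)}=1$ (a Pieri-rule check), one perturbs $I$ to $I'$ and reads off the consecutive differences $\lambda_i-\lambda_{i+1}$, $\mu_i-\mu_{i+1}$, $\nu_i-\nu_{i+1}$ as specific Littlewood--Richardson numbers, each computed by a short ballot-tableau count. This yields, for $r=3k-1$, an extremal ray with $\lambda=((k)^{k-1},(k-1)^k)$ and $\nu_1=k(k-1)=\Omega(r^2)$; primitivity follows from $\gcd(k,k-1)=1$, so the point lies in the Hilbert basis. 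Producing such a construction (or an equivalent one, together with an actual extremality proof) is what your plan still needs before it proves the statement.
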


\begin{proof}
The extremal rays of the Littlewood-Richardson cone are inductively
determined  by P.~Belkale \cite{Belkale}. 
For each $r\equiv 2 \pmod{3}$, we will use Belkale's formulas (as recalled in \cite{K:21}) to show that there exists a (``type I'') extremal ray for ${\sf LR}_r$, $(\lambda,\mu,\nu)$, such that $\nu_1\in \Omega(r^2)$. 

Fix $k\ge 2$, and let $r = 3k-1$.  Let 
\[I=\{1,2,3,\dots, k-2,k-1,2k\}, J = \{1,4,7,\dots, 3k-2\}, \text{ \ and $K = \{2,5,8,\dots, 3k-1\}$.}\]  
Then we have that 
\[\tau(I) = (k), \tau(J) = (2k-2,2k-4,\dots, 2), \text{ \ and $\tau(K) = (2k-1,2k-3,\dots, 3,1)$.}\]  
Clearly, $\tau(K)\setminus \tau(J)$ is a horizontal strip of size $k$, so by Pieri's rule $c^{\tau(K)}_{\tau(I),\tau(J)} = 1$, so $(I,J,K)$ gives a face of the Littlewood-Richardson cone, thus satisfying the initial condition of the extremal ray generation algorithm. 

Now, we choose to increase the subset $I$ by replacing $k-1$ with $k$. Let $I'=\{1,2,3,\dots, k-2,k,2k\}$, $J' = J$, and $K' = K$.  Now $\tau(I') = (k,1)$.  We will construct the corresponding $\lambda,\mu,\nu$ separately according to the algorithm:

\noindent \emph{Calculating $\lambda$:}  There are only two elements of $I'$ that can be reduced by $1$: the $k$ and the $2k$.  If we subtract $1$ from the $k$, then $I'' = I$, so $\lambda_{k-1} - \lambda_k = c^{\tau(K')}_{\tau(I''),\tau(J')} = c^{\tau(K)}_{\tau(I),\tau(J)}=1$.

On the other hand, if we subtract $1$ from the $2k$, then $\tau(I'') = \tau(\{1,2,3,\dots, k-2,k,2k-1\}) = (k-1,1)$.  According to the Littlewood-Richardson rule, this means that $c^{\tau(K')}_{\tau(I''),\tau(J')}$ is the number of ballot tableaux of shape $(2k-1,2k-3,\dots, 3,1)\setminus (2k-2,2k-4,\dots, 4,2)$ filled with $k-1$ $1$'s and one $2$.  As long as the $2$ is placed in any row other than the first, the filling will be ballot, and since $(2k-1,2k-3,\dots, 3,1)\setminus (2k-2,2k-4,\dots, 4,2)$ has $k$ rows, $\lambda_{2k-1} - \lambda_{2k} = c^{\tau(K')}_{\tau(I''),\tau(J')}=k-1$.

For all other $i\in [r]$, either $i\in I'$ or both $i,i+1\not\in I'$, so $\lambda_i-\lambda_{i+1}=0$ for all $i\not= k-1,2k-1$.  Therefore, $\lambda = ((k)^{k-1},(k-1)^k)$.

\noindent \emph{Calculating $\mu$:} For each $i\in [k]\setminus \{1\}$, $3i-2\in J'$, but $3i-3\not\in J'$, so we can subtract $1$ from $3i-2$ to let $J'' = \{1,4,\dots,3i-5,3i-3,3i+1,3i+4,\dots 3k-2\}$.  According to the Littlewood-Richardson rule, this means that $c^{\tau(K')}_{\tau(I'),\tau(J'')}$ is the number of ballot tableaux of shape $(2k-1,2k-3,\dots, 3,1)\setminus (2k-2,2k-4,\dots, 2i, 2i-3,2i-4,2i-6, \dots ,2)$ filled with $k$ $1$'s and one $2$.  If the two is in the rightmost box of any row other than the first, the filling will be ballot, and otherwise it will not be, and since $(2k-1,2k-3,\dots, 3,1)\setminus (2k-2,2k-4,\dots, 4,2)$ has $k$ rows, $\mu_{3i-3} - \mu_{3i-2} = c^{\tau(K')}_{\tau(I'),\tau(J'')}=k-1$. 

For all other $i\in [r]$, either $i\in J'$ or both $i,i+1\not\in J'$, so $\mu_i-\mu_{i+1}=0$ for all $i\not= 3j-2$ for any $j$.  Therefore, $\mu = (((k-1)(k-1))^3,((k-2)(k-1))^3,\dots, (k-1)^3)$.

\noindent \emph{Calculating $\nu$:} For each $i\in [k-1]$, $3i-1\in K'$, but $3i\not\in K'$, so we can add $1$ to $3i-1$ to let $K'' = \{2,5,\dots,3i-4,3i,3i+2,3i+5,\dots 3k-1\}$.  According to the Littlewood-Richardson rule, this means that $c^{\tau(K'')}_{\tau(I'),\tau(J')}$ is the number of ballot tableaux of shape $(2k-1,2k-3,\dots, 2i+1, 2i,2i-3,2i-5, \dots ,3,1)\setminus (2k-2,2k-4, \dots ,2)$ filled with $k$ $1$'s and one $2$.  If the two is in the rightmost box of any row other than the first, the filling will be ballot, and otherwise it will not be, and since $(2k-1,2k-3,\dots, 2i+1, 2i,2i-3,2i-5, \dots ,3,1)\setminus (2k-2,2k-4, \dots ,2)$ has $k$ rows, $\nu_{3i-1} - \nu_{3i} = c^{\tau(K'')}_{\tau(I'),\tau(J')}=k-1$. 

For all other $i\in [r]$, either $i\in K'$ or both $i,i+1\not\in K'$, so $\nu_i-\nu_{i+1}=0$ for all $i\not= 3j-1$ for any $j$.

By \cite[Algorithm 3.2(4)]{K:21}, we can find $\nu_r$, and thus finish determining $\nu$, in one of two ways. One can simply use $K'' = \{2,5,\dots,3k-4,3k\}$ and calculate $\nu_r = c^{\tau(K'')}_{\tau(I'),\tau(J')}=k-1$ (same reasoning as above). Or, one can solve the equation $|\lambda|+|\mu| = |\nu|$: we know that $\nu = (((k-1)(k-1))^2,((k-2)(k-1))^3,((k-3)(k-1))^3,\dots, (k-1)^3)) + (\nu_r^r)$, where $\nu_r$ is to be determined.  We see that $|\lambda| = 2k(k-1)$, $|\mu| = \frac{3}{2}k(k-1)^2$, and $|\nu| = (3k-1)\nu_r + \frac{3}{2}k(k-1)^2 - (k-1)^2$.  We get that $\nu_r = k-1$ once again. So $\nu = ((k(k-1))^2,((k-1)(k-1))^3,((k-2)(k-1))^3,\dots, (k-1)^3)$.

Since $n=3k-1$, $k = \frac{r+1}{3}$, and so we have constructed an extremal ray of ${\sf LR}_r$ such that $\nu_1 = \frac{r+1}{3}(\frac{r+1}{3}-1) = \Omega(r^2)$.  Since $\lambda_1 = k$ and $\lambda_{k} = k-1$ are relatively prime, $(\lambda,\mu,\nu)$ is the first lattice point along its one dimensional face.  Therefore, $(\lambda,\mu,\nu)$ is in the Hilbert basis for ${\sf LR}_r$, completing the proof.
\end{proof}

In fact, Theorem \ref{thm:counterex} has an application to a more general setting. 
Analogous to the Littlewood-Richardson coefficients  $c_{\lambda,\mu}^\nu$ are the spaces of conformal blocks $\mathbb{V}(\lambda,\mu,\nu,\ell)$ and their dimensions $c_{\lambda,\mu}^\nu(\ell)$; here $\lambda,\mu,\nu$ are still partitions of length at most $r$, and $\ell$ is a natural number. These always satisfy 
\begin{align}\label{eq:cblocks}
c_{\lambda,\mu}^\nu(\ell) &\le c_{\lambda,\mu}^\nu;\\\nonumber
\lim_{\ell\to \infty} c_{\lambda,\mu}^\nu(\ell) &= c_{\lambda,\mu}^\nu.
\end{align}
Akin to the cone ${\sf LR}_r$, there is a pointed rational cone ${\sf CB}_r$ generated by all $(\lambda,\mu,\nu,\ell)$ such that $c_{\lambda,\mu}^\nu(\ell)\ne 0$. By (\ref{eq:cblocks}), there is a natural surjective map ${\sf CB}_r\twoheadrightarrow {\sf LR}_r$ by forgetting the $\ell$-coordinate. 

The extremal rays of ${\sf CB}_r$ have been studied by Belkale in \cite{Belk2}, and the rays formulas of \cite{Belkale} and \cite{Belk2} are compatible in that every type I extremal ray of ${\sf LR}_r$ has a lift to a type I extremal ray of ${\sf CB}_r$. 

A trivial property of the coefficients $c_{\lambda,\mu}^\nu(\ell)$ is that they are $0$ unless $\max\{\lambda_1-\lambda_r, \mu_1-\mu_r, \nu_1-\nu_r\}\le \ell$. So in the proof of Theorem \ref{thm:counterex}, the type I ray $(\lambda,\mu,\nu)$ of ${\sf LR}_{3k-1}$ lifts to a type I ray $(\lambda,\mu,\nu,\ell)$ of ${\sf CB}_{3k-1}$ for which $\ell \ge (k-1)^2$ (in fact, $\ell = (k-1)^2+1$).  This violates the original speculation that $\ell<r$ for the type I extremal rays put forth in the preprint \cite[Remark 1.6.5]{Belk2}.

\section*{Acknowledgements}
We thank Allen Knutson, David Speyer and John Stembridge for helpful communications. AY was partially supported by a Simons Collaboration Grant. SG, GO and AY were supported  by NSF RTG DMS 1937241. SG was partially supported by the
National Science Foundation Graduate Research Fellowship Program under Grant No. DGE--1746047.

\end{document}